\newtheorem{proposition}{Proposition}
\newtheorem{theorem}[proposition]{Theorem}
\newtheorem{lemma}[proposition]{Lemma}
\newtheorem{corollary}[proposition]{Corollary}
\newenvironment{result}[1]
  {\innerresult}
  {\endinnerresult}
\theoremstyle{remark}
\newtheorem{remark}[proposition]{Remark}
\theoremstyle{definition}
\numberwithin{equation}{section}
\numberwithin{proposition}{section}
\numberwithin{figure}{section}
\numberwithin{table}{section}
\newcommand{\Z}{\mathbb{Z}}
\newcommand{\N}{\mathbb{N}}
\newcommand{\R}{\mathbb{R}}
\newcommand{\E}{\mathbb{E}}
\renewcommand{\P}{\mathbb{P}}
\renewcommand{\S}{\mathbb{S}}
\newcommand{\C}{\mathbb{C}}
\newcommand{\cle}{\mathrm{CLE}}
\newcommand{\sle}{\mathrm{SLE}}
\newcommand{\eps}{\varepsilon}
\renewcommand{\leq}{\leqslant}
\renewcommand{\geq}{\geqslant}
\renewcommand{\subset}{\subseteq}
\renewcommand{\bar}{\overline}
\renewcommand{\tilde}{\widetilde}
\renewcommand{\hat}{\widehat}
\newcommand{\Ll}{\left}
\newcommand{\Rr}{\right}
\renewcommand{\d}{\mathrm{d}}
\DeclareMathOperator{\diam}{diam}
\newcommand{\A}{{\mathcal{A}}}
\newcommand{\cro}{{\mathrm{crs}}}
\newcommand{\sep}{{\mathrm{sep}}}
\newcommand{\good}{{\mathrm{smpl}}}
\newcommand{\lop}{{\mathrm{loop}}}
\newcommand{\D}{D}
\newcommand{\bb}{\mathbf{b}}
\newcommand{\crit}{\mathrm{c}}
\newcommand{\VV}{\mathtt{V}}
\newcommand{\EE}{\mathtt{E}}
\newcommand{\peel}{\mathrm{peel}}
\newcommand{\most}{\mathrm{most}}
\newcommand{\conc}{\mathrm{conc}}
\newcommand{\bbeta}{\boldsymbol{\beta}}
\newcommand{\bt}{\mathbf{t}}
\newcommand{\bn}{{\mathbf n}}
\begin{document}
\usetikzlibrary{decorations.pathreplacing}
\usetikzlibrary {patterns,patterns.meta}

\author[H.-B. Chen]{Hong-Bin Chen}
\address[H.-B. Chen]{Institut des Hautes \'Etudes Scientifiques, France}
\email{hbchen@ihes.fr}

\author[J. Xia]{Jiaming Xia}
\address[J. Xia]{Institut des Hautes \'Etudes Scientifiques, France}
\email{xiajiam@ihes.fr}

\title[Conformal invariance of random currents]{Conformal invariance of random currents:\\ a stability result}

\begin{abstract}
We show the convergence of the single sourceless critical random current to a limit identifiable with the nested $\cle(3)$. Our approach is based on viewing the random current as a perturbation of the Ising interface, which is known to converge to $\cle(3)$. Instead of focusing solely on the random current, we provide a general framework for the stability of scaling limits under the perturbation by superimposing an independent Bernoulli percolation.
\end{abstract}

\maketitle

\section{Introduction}
In the past few decades, the understanding of conformally invariant random objects has advanced rapidly, thanks to the development of the Schramm--Loewner Evolution (SLE) and its related subjects such as the Gaussian Free Field (GFF).
A consensus is that if the scaling limit of a discrete lattice model is conformally invariant, then most of the quantities interesting to mathematical physicists can be computed exactly in the continuum. 

Conjectures have been made regarding a large class of discrete lattice models, suggesting that their interfaces converge to $\sle(\kappa)$-type curves in the scaling limit for $\kappa\in(0,8]$. Such convergence results have been established for the loop-erased random walk~\cite{schramm2000scaling} ($\kappa = 2$), the uniform spanning tree~\cite{lawler2011conformal} ($\kappa=8$), the Ising model~\cite{chelkak2012universality} ($\kappa=3$), the FK--Ising random cluster model~\cite{smirnov2010conformal} ($\kappa=16/3$), Bernoulli percolation~\cite{smirnov2001critical} ($\kappa=6$), and more recently, the double random current \cite{duminil2021conformalI} ($\kappa=4$). The development of SLE leads to the introduction of the Conformal Loop Ensembles (CLE) \cite{sheffield2009exploration}, which is expected to describe the limit of the full set of interfaces. For $\kappa\in (\frac{8}{3},8]$, $\cle(\kappa)$ is a conformally invariant probability measure on collections of loops that locally behave like $\sle(\kappa)$. 
The convergence of the full set of interfaces has been established for Bernoulli percolation \cite{camia2006two} ($\kappa= 6$), the Ising model \cite{benoist2019scaling} ($\kappa=3$), the FK--Ising model \cite{kemppainen2019conformal} ($\kappa=16/3$), and the double random current \cite{duminil2021conformalI,duminil2021conformal} ($\kappa=4$). The connection between $\cle(4)$ and the GFF is established in \cite{aru2019bounded,miller2011cle}.

The proofs of convergence to SLE curves usually involve computing certain discrete observables and proving their convergence in the scaling limit to holomorphic or harmonic functions that satisfy conformally covariant boundary value problems. To achieve the convergence of interfaces through the conformal covariance of observables, additional ingredients are often necessary. The spatial Markov properties of discrete models, including the Ising model, the FK--Ising model, and Bernoulli percolation, are used as crucial tools in proving the conformal invariance of interfaces in their respective models. However, many models do not possess sufficiently favorable discrete properties, and the scaling limit of their interfaces is not an easy consequence of the conformal invariance of certain observables. Examples of such models are the double dimer model \cite{kenyon2014conformal,dubedat2018double,basok2021tau} and the double random current \cite{duminil2021conformalI,duminil2016random}, which require different strategies.

The random current model is an expansion of the Ising model that has proven to be powerful. Applications include correlation inequalities \cite{griffiths1970concavity}, exponential decay \cite{aizenman1987phase,duminil2020exponential,duminil2016new}, Gibbs states \cite{raoufi2020translation}, continuity of the phase transition \cite{aizenman2015random}, and etc. Other recent progress utilizing this tool includes \cite{aizenman2019emergent,duminil2019double,lupu2016note}. We refer to the survey \cite{duminil2016random} for more details on the random current. 
The scaling limit of the random current model is expected to be conformally invariant (see \cite[Question~3]{duminil2016random}).

In this work, we prove the convergence of the single sourceless critical random current to a limit identifiable with the nested $\cle(3)$. We emphasize that the convergence is for the configuration itself, \textit{not} for the interfaces or boundaries of clusters.

Our approach to the convergence of the random current starts by viewing it as a perturbation of the high-temperature expansion via adding an independent Bernoulli percolation.
Since the high-temperature expansion can be identified with the Ising interface, which is known to converge to $\cle(3)$, we recast the convergence problem of the random current as a stability problem for the convergence of the Ising interface under a Bernoulli perturbation.
Instead of solely focusing on the random current, we develop a general framework for addressing the stability problem.

\subsection{Main results}

Let $\D$ be a Jordan domain in $\R^2$ and for each $\delta>0$, let $\D_\delta$ be a natural discretization of $\D$ on the lattice $\delta\Z^2$. We consider a general percolation model $\eta_\delta$ on $\D_\delta$ that is sourceless and, as a consequence, can be decomposed into disjoint discrete loops. On top of $\eta_\delta$, we add a Bernoulli percolation $\bb_\delta$ (where the probability of being open can be different among edges) on $\D_\delta$ and denote the resulting perturbed percolation model by $\omega_\delta$.

We assume that $\eta_\delta$ converges in distribution in the space $\mathscr{L}$ of loop collections on $\D$ to some limit $\eta_0$. We want to show that $\omega_\delta$ converges in distribution to the same limit given that the perturbation is small. Since $\omega_\delta$ does not admit a natural loop representation in general, we encapsulate the information of $\omega_\delta$ in terms of crossings of topological annuli in $\D$. We say that an annulus $A$ is crossed by $S\subset \R^2$ if $S$ contains a path connecting the inner boundary of $A$ to its outer boundary. We put a type of Schramm--Smirnov topology \cite{SchrammSmirnov} on the space $\mathscr{H}$ of \textit{hereditary} collections of crossed annuli.

We emphasize again that we consider the percolation configurations, instead of the interfaces. 
In our setting, the projection of a percolation configuration into $\mathscr{L}$ or $\mathscr{H}$ is defined using the open edges in the configuration, \textit{not} interfaces. Hence, we are in a situation different from \cite{garban2013pivotal} where crossings of quads are made by open edges but the loops are made of interfaces. The relation between crossings and loops in \cite{garban2013pivotal} is measure-theoretic (see \cite[Proposition~2.7]{garban2013pivotal} which uses ideas from \cite{camia2006two}), whereas in our case, the relation is deterministic.
We pass the information of loops to that of crossings by the deterministic map $F:\mathscr{L}\to \mathscr{H}$ defined by
\begin{align*}
    F:\quad L=\{\text{loops}\}\quad\mapsto \quad \{A\subset D: \text{$A$ is crossed by some loop in $L$}\}.
\end{align*}
We show in Section~\ref{s.cts_bij} that its restriction $\hat F:\mathscr{L}^\good \to F(\mathscr{L}^\good)$ is a continuous bijection where $\mathscr{L}^\good$ is the subspace of $\mathscr{L}$ consisting of collections of simple disjoint loops.

A few comments on $\mathscr{H}$ are due: the Schramm--Smirnov topology is used in \cite{SchrammSmirnov} on crossings of quads (i.e.\ topological rectangles), whereas we consider crossings of annuli. We choose annuli instead of quads for two reasons:
\begin{itemize}
    \item For quads, a crossing is defined to be a path within the quad connecting its left and right boundaries. Hence, to realize a crossing, a path cannot exit the top and bottom edges, which requires additional care. For crossings of annuli, there is no such issue.
    \item The crossing of an annulus is dual to the separation of its inner and outer boundaries. A natural geometric object that separates an annulus is a loop, which enables us to relate loops to crossings.
\end{itemize}
Additionally, by construction, the compactness of the Schramm-Smirnov topology is a major advantage.

\begin{result}{A}\label{resultA}
    Suppose that $\eta_\delta$ satisfies some mild conditions and the perturbation intensity is low so that $\omega_\delta$ is close to $\eta_\delta$. If $\eta_\delta$ converges in distribution in $\mathscr{L}$ to some $\eta_0$ as $\eta\to0$, then $(\eta_\delta,\omega_\delta)$ converges in distribution in $\mathscr{L}\times \mathscr{H}$ to $(\eta_0,F(\eta_0))$.

    If, in addition, $\eta_0\in\mathscr{L}^\good$ a.s., then this gives a coupling between $\eta_0$ and the limit $\omega_0$ of $\omega_\delta$ such that $\omega_0 = \hat F(\eta_0)$ and $\hat F^{-1}(\omega_0) = \eta_0$ a.s. 
\end{result}

\noindent The rigorous version of Result~\ref{resultA} is Theorem~\ref{t.(eta,omega)_id_lim}.
The conditions on $(\eta_\delta,\omega_\delta)$ are stated in~\ref{i.H_basics}--\ref{i.H_bdy_conn}.

Then, we apply this to the setting of the random current. We put a critical Ising model $\sigma_\delta$ on the vertices of $\D_\delta$. Let $\eta_\delta$ and $\hat\bn_\delta$ be, respectively, the percolation configuration associated with the high-temperature expansion of $\sigma_\delta$ and the trace of random current associated with $\sigma_\delta$. By a coupling in \cite{aizenman2019emergent}, 
we represent $\hat\bn_\delta=\omega_\delta$ for some $\bb_\delta$ in the above notation.

By the Kramers--Wannier duality \cite{kramers1941statistics}, $\eta_\delta$ is also the interface of a critical Ising model $\sigma^*_\delta$ on the dual graph $\D_\delta^*$ with $+$ boundary condition. It is established in \cite{benoist2019scaling} that $\eta_\delta$ converges to the nested $\cle(3)$ on $\D$.
We also verify that $(\eta_\delta,\hat\bn_\delta)$ satisfies the conditions for Result~\ref{resultA}. Let $L^{\cle(3)}$ be distributed as the nested $\cle(3)$ on $\D$, then by definition $L^{\cle(3)}\in \mathscr{L}^\good$ a.s. Hence, we obtain the following from Result~\ref{resultA}.

\begin{result}{B}\label{resultB}
    As $\delta\to0$, $(\eta_\delta,\hat\bn_\delta)$ converges in distribution in $\mathscr{L}\times\mathscr{H}$ to $(L^{\cle(3)},\hat\bn_0)$ where $\hat\bn_0$ satisfies $\hat\bn_0 = \hat F(L^{\cle(3)})$ and $\hat F^{-1}(\hat\bn_0) = L^{\cle(3)}$ a.s. 
    In particular, $\hat\bn_0$ is conformally invariant.
\end{result}

\noindent 
Therefore, the limit of $\hat\bn_\delta$ is identified with the nested $\cle(3)$ via the deterministic bijection $\hat F$. 
The notion of conformal invariance for an $\mathscr{H}$-valued random variable is clarified in Section~\ref{s.conformal_inv}.

The rigorous version of Result~\ref{resultB} is Corollary~\ref{c.cvg_rc} of Theorem~\ref{t.cvg_rc_cle}. This theorem proves a more general result that the same convergence holds for any perturbation of $\eta_\delta$ with intensity below a certain level $t^\star$. This applies to the random current because it has intensity $t_\crit<t^\star$. In the proof, we utilize a key estimate on the boundary connectivity of double random currents from \cite[Theorem~1.2]{duminil2021conformal}.

\subsection{Outline}

This work is divided into two parts. The first part, Sections~\ref{s.prelim}--\ref{s.id_limits}, concerns the general setting and the proof of Result~\ref{resultA}. The second part, Section~\ref{s.app_random_current}, applies the result to the random current setting and proves Result~\ref{resultB}. We briefly describe each section.

In Section~\ref{s.prelim}, we give various definitions and state the three conditions~\ref{i.H_basics},~\ref{i.H_eta_cross_similar}, and~\ref{i.H_bdy_conn} on $(\eta_\delta,\omega_\delta)$ to be assumed throughout. We also comment on these conditions.

In Section~\ref{s.loop_decomp}, given a sourceless configuration $\kappa$ (applicable to $\eta_\delta$), we construct a loop decomposition of $\kappa$ that gives a collection of loops consisting of open edges in $\kappa$. The construction is built on two procedures: peeling and concatenation. In the end, we obtain loops that are mutually disjoint and simple in a weaker sense. The construction is summarized in Lemma~\ref{l.kappa^lop_loop_decomposition}. This loop decomposition construction is needed in Section~\ref{s.compare_cross} and Section~\ref{s.app_random_current}.

In Section~\ref{s.compare_cross}, we use an exploration process to show that, under the assumed conditions, $\omega_\delta$ and $\eta_\delta$ crosses asymptotically the same collection of annuli, which is used in Section~\ref{s.id_limits} to identify the limits in the annulus-crossing space.

In Section~\ref{s.topologies}, we define relevant topologies. We start by introducing the annulus-crossing space and recalling the results on the Schramm--Smirnov topology from \cite{SchrammSmirnov}.
We then recall the metric topology on the space of collections of loops, on which $\cle(\kappa)$ lives for $\kappa\in(\frac{8}{3},4]$. Later, we prove that the aforementioned map $F$ from loops to crossed annuli is continuous and injective, and thus its restriction $\hat F$ is a continuous bijection.
Lastly, we clarify the meaning of conformal invariance in our context.

In Section~\ref{s.id_limits}, we combine results from previous sections to prove Result~\ref{resultA}, which completes the first part.

In Section~\ref{s.app_random_current}, we start the second part. We begin with the definitions of $\eta_\delta$ and $\hat\bn_\delta$ for the critical Ising model and recall useful couplings. Next, using results in \cite{benoist2019scaling}, we show that the loop decomposition of $\eta_\delta$ converges to $\cle(3)$. In \cite{benoist2019scaling}, it is the collection of outermost loops in $\eta_\delta$ that converges, which is different from our loop decompositions. Lastly, we verify that under this setting, the conditions are satisfied and apply Result~\ref{resultA} to get Result~\ref{resultB}. 

\subsection{Acknowledgements}
The authors are grateful to Hugo Duminil-Copin for the introduction to this subject, stimulating discussions, consistent encouragement, and feedback on the preliminary version of the manuscript. The authors warmly thank Dmitry Chelkak, Fran\c cois Jacopin, Piet Lammers, and Yiling Wang for answering many questions. This project has received funding from the European Research Council (ERC) under the European Union’s Horizon 2020 research and innovation programme (grant agreement No.\ 757296).

\section{Preliminaries}\label{s.prelim}

\subsection{Definitions}

\subsubsection{Topological objects}\label{s.def_topo_obj}

A \textbf{path} $\gamma$ is a continuous function from $[0,1]$ to $\R^2$.
Let $\mathbb{S}^1$ be the unit circle.
A \textbf{loop} is a continuous function $l:\S^1\to\R^2$. We call $l$ \textbf{simple} if $l(\S^1)$ is homeomorphic to $\S^1$. We call $l$ \textbf{non-self-crossing} if there is a sequence $(l_n)_{n\in\N}$ of simple loops converging to $l$ uniformly.

For a simple loop $l$, a subset $R\subset\R^2$ is said to be \textbf{inside} (resp.\ \textbf{outside}) $l$ if $R$ is in the closed bounded (resp.\ the unbounded) component of $\R^2\setminus l$. For a non-self-crossing loop $l$,
$R$ is said to be \textbf{inside} (resp.\ \textbf{outside}) $l$ if there is a sequence $(l_k)_{k\in\N}$ of simple loops converging to $l$ such that $R$ is inside (resp.\ outside) $l_k$ for each $k$.

A \textbf{domain} is a subset of $\R^2$ that is homeomorphic to the closed unit disc. In particular, its boundary is a simple loop.

An \textbf{(topological) annulus} $A$ is a homeomorphism from $\mathbb{S}^1\times [0,1]$ onto its image in $\R^2$ such that the \textbf{inner boundary} $\partial_0 A = A(\S^1\times \{0\})$ is inside the \textbf{outer boundary} $\partial_1 A = A(\S^1\times \{1\})$. Then, the usual boundary of $A$ is given by $\partial A = \partial_0 A\cup \partial_1 A$.

We often identify $\gamma$, $l$, and $A$ with their images.

\subsubsection{Discrete objects}\label{s.discrete_objects}
Throughout, for points $z=(z_1,z_2)\in \R^2$, we denote its Euclidean norm by $|z| = \sqrt{|z_1|^2+|z_2|^2}$ and its sup-norm by $|z|_\infty= \max\{|z_1|,|z_2|\}$. These norms naturally induce the Euclidean distance and the sup-distance, respectively.

For $\delta>0$, we view $\delta\Z^2$ as a graph with vertices being points in $\delta\Z^2$ and edges being the line segments between two points in $\delta\Z^2$ at a distance $\delta$ of each others.
Two vertices $x,y\in \delta\Z^2$ is said to be \textbf{neighbors} if $|x-y| =\delta$ and we write $x\sim y$. We denote by $xy$ the edge with endpoints $x$ and $y$. 

We can view subgraphs of $\delta\Z^2$ as subsets of $\R^2$. Indeed, we identify each vertex as a singleton set in $\R^2$ and identify edges $xy$ with $\Ll\{sx+(1-s)y:\:s\in[0,1]\Rr\}$.
We define the \textbf{continuous version} of a subgraph of $\delta\Z^2$ to be the union of its vertices and edges under this identification. 

A \textbf{$\delta$-discrete path} $\gamma$ is a sequence $(x_0,x_1,\dots,x_n)$ of neighboring vertices in $\delta\Z^2$.
We denote also by $\gamma$ the subgraph with vertices $\{x_i\}_{i=0}^n$ and edges $\{x_{i-1}x_{i}\}_{i=1}^n$. 
By viewing it as a subset of $\R^2$, $\gamma$ can be identified with a continuous path, which is the continuous version of $\gamma$.

If a $\delta$-discrete path $l = (x_0,x_1\dots,x_n)$ satisfies $x_0=x_n$, then $l$ is called a \textbf{$\delta$-discrete loop}. 
The loop $l$ is said to be \textbf{simple} if $x_0,x_1,\dots,x_{n-1}$ are distinct; and \textbf{non-self-crossing} if the oriented edges $(\overrightarrow{x_ix_{i+1}})_{i=0}^n$ are distinct and there is no vertices $x_i$ and $x_j$ such that $(x_{i-1},x_i,x_{i+1})$ and $(x_{j-1},x_j,x_{j+1})$ are on two orthogonal lines (namely, $l$ does not go through itself). Note that a $\delta$-discrete loop is simple or non-self-crossing if and only if its continuous version is so. A subset of $\R^2$ is said to be inside (resp.\ outside) a $\delta$-discrete non-self-crossing (in particular, simple) loop if it is so with respect to the continuous version of the loop.

For a subgraph $G$ of $\delta\Z^2$, we denote by $\VV(G)$ and $\EE(G)$ its vertex set and edge set, respectively. We define the \textbf{(inner) boundary} of $G$ by
\begin{align}\label{e.boundary}
    \partial G = \Ll\{x\in \VV(G):\:\exists y\not\in \VV(G),\, x\sim y\Rr\}.
\end{align}

A subgraph $K$ is said to be a \textbf{$\delta$-discrete domain} if $\partial K$ is traversed by a simple loop; and a \textbf{$\delta$-discrete disc} if $\partial K$ is traversed by a non-self-crossing loop.
A subgraph $A$ of $\delta\Z^2$ is a \textbf{$\delta$-discrete annulus} if $\partial A$ consists of two non-self-crossing loops, one inside the other.
Note that $\delta$-discrete discs and annuli can be degenerate.

When the discreteness is obvious from the context, we sometimes drop the qualifier ``$\delta$-discrete''.

\subsubsection{Percolation configurations}\label{s.kappa_prop}

On a subgraph $G$ of $\delta \Z^2$, we call an element $\kappa =(\kappa_e)_{e\in \EE(G)}$ in $\{0,1\}^{\EE(G)}$ a \textbf{percolation configuration (on $G$)}. An edge $e$ is said to be \textbf{open} (resp.\ \textbf{closed}) in $\kappa$ if $\kappa_e= 1$ (resp.\ $\kappa_e= 0$).

We view $\kappa$ as a graph by taking $\EE(\kappa) = \{e: \kappa_e=1\}$ and $\VV(\kappa)$ to consist of endpoints of edges in $\EE(\kappa)$. When $\kappa$ is involved in a set-theoretical operation ($\cup$, $\cap$, $\setminus$, and etc.), $\kappa$ is viewed as the continuous version of the graph $(\VV(\kappa), \EE(\kappa))$.

For any subgraph $R$ of $G$, we define the \textbf{restriction} of $\kappa$ to $R$ by
\begin{align}\label{e.kappa_K}
    \kappa_{R} = (\kappa_e)_{e\in \EE(R)}.
\end{align}
We define the \textbf{trivial extension} of $\kappa$ to a percolation configuration $\bar\kappa$ on $\delta\Z^2$ by declaring, for every $e\in \EE(\delta\Z^2)$,
\begin{align}\label{e.trivial_ext}
    \bar\kappa_e = 1 \qquad\Longleftrightarrow\qquad e\in \EE(G),\quad \kappa_e = 1.
\end{align}

For any subgraph $R$ of $\delta\Z^2$, we define the \textbf{source} set of $\kappa$ relative to $R$ to be the set
\begin{align}\label{e.source}
    \partial_R \kappa = \Ll\{x\in \VV(\delta\Z^2):\sum_{y:\:  xy\in \EE(R)} \bar\kappa_{xy}\text{ is odd}\Rr\}.
\end{align}
It is possible that $\partial_R\kappa\not\subset \VV(R)$ but every vertex in $\partial_R\kappa$ is an endpoint to some edge in $\EE(R)$.

\subsubsection{The main domain and its discretization}
Throughout, we fix a domain $\D\subset \R^2$ and a family $(\D_\delta)_{\delta>0}$ of $\delta$-domains such that the continuous version of the loop $\partial\D_\delta$ converges to $\partial\D$ as $\delta\to0$ in the topology of uniform convergence up to reparametrization.

\subsubsection{Dyadic annuli}\label{s.dyadic_annuli}

For $k\in \N$, let $\mathcal{A}^k_\D$ be the collection of topological annuli in the interior of $\D$ with boundaries on the grid of $2^{-k}\Z^2$. 
Set $\mathcal{A}_{\D}^{\mathrm{dya}} = \cup_{k\in\N} \mathcal{A}_\D^k $ to be the collection of \textbf{dyadic annuli}. 
Since every $A\in\mathcal{A}_{\D}^{\mathrm{dya}}$ is away from $\partial D$, for sufficiently small $\delta$, we have that $A$ is inside $\partial D_\delta$.

\subsubsection{Probabilities measures}
\label{s.def_eta^loop}
We introduce the main objects to work with.
For each $\eta_\delta \in \{0,1\}^{\EE(\D_\delta)}$, we are given a real number $w(\eta_\delta)\geq 0$ which can be thought of as the weight on $\eta_\delta$. 
For every subgraph $G\subset\D_\delta$, we define a probability measure $\P_G^\emptyset$ on the sourceless percolation configuration by
\begin{align}\label{e.P^emptyset_G}
    \P_G^\emptyset \Ll\{\eta_\delta\in \mathscr{E}\Rr\} = \frac{\sum_{\eta_\delta:\:\eta_\delta\in \mathscr{E},\, \partial_G \eta_\delta =\emptyset }w\Ll(\eta_\delta\Rr)}{\sum_{\eta_\delta:\:\partial_G \eta_\delta=\emptyset }w\Ll(\eta_\delta\Rr)}
\end{align}
for every event $\mathscr{E}$ depending only on edges in $\EE(G)$.
We need the following version of the \textbf{Markov property}: 
\begin{align}\label{e.markov_prop}
\begin{cases}
    \textit{for every $\delta$-disc $K\subset\D_\delta$ and every $\kappa \in \{0,1\}^{\EE(\D_\delta\setminus K)}$ satisfying $\partial_{\D_\delta\setminus K} \kappa=\emptyset$,}
    \\
    \textit{it holds that }\P^\emptyset_{\D_\delta}\Ll\{\eta_\delta\in \mathscr{E} \,\big|\,(\eta_\delta)_{\D_\delta\setminus K}=\kappa\Rr\} = \P^\emptyset_K\Ll\{\eta_\delta\in \mathscr{E}\Rr\} 
    \\
    \textit{for every event $\mathscr{E}$ depending only on the edges in $\EE(K)$.}
\end{cases}
\end{align}

\subsubsection{Bernoulli perturbation}\label{s.bernoulli_pert}
For each $\delta>0$, let $\bt_\delta \in [0,1]^{\EE(\D_\delta)}$ be a collection of Bernoulli coefficients on the edges of $\D_\delta$.
Let $\bb_\delta^{\bt_\delta}$ be the Bernoulli percolation on $\EE(\D_\delta)$ with coefficients $\bt_\delta$. More precisely, for each $e\in \EE(\D_\delta)$, $(\bb_\delta^{\bt_\delta})_e$ is an independent Bernoulli random variable with coefficient $(\bt_\delta)_e$.
If all the entries in $\bt_\delta$ are equal to some $t\in [0,1]$, we use the shorthand notation $\bb_\delta^t$.

Then, we describe a perturbation of $\eta_\delta$ by opening additional edges according to $\bb_\delta^{\bt_\delta}$. For real numbers $a, b$, we write
\begin{align}\label{e.aveeb}
    a\vee b = \max\{a,b\}.
\end{align}
For each $\delta$, we define the perturbed configuration
\begin{align}\label{e.omega_delta}
    \omega_\delta = \eta_\delta\vee \bb^{\bt_\delta}_\delta= ((\eta_\delta)_e \vee (\bb^{\bt_\delta}_\delta)_e)_{e\in \EE(\D_\delta)}.
\end{align}
By extension, we assume that $\eta_\delta$ and $\bb^{\bt_\delta}_\delta$ are defined on the same probability space and we denote the joint probability measures still by $(\P^\emptyset_G)_{G\subset \D_\delta}$.

\subsubsection{Connection}

Let $B_0$, $B_1$, and $S$ be subsets of $\R^2$. We write $B_0\stackrel{S}{\longleftrightarrow} B_1$ and say $S$ \textbf{connects} $B_0$ and $B_1$ if $S$ contains a connected compact subset intersecting both $B_0$ and $B_1$. For any annulus $A$, we write
\begin{align}\label{e.circledcirc_A^s}
    \circledcirc_A^S = \Ll\{\partial_0 A \stackrel{S\cap A}{\longleftrightarrow} \partial_1 A\Rr\}.
\end{align}

\subsection{Conditions}

For any subset $S\subset \R^2$ and $r>0$, we set $(S)_r = \{y \in \R^2: d(y,S)\leq r\}$ where $d$ is the Euclidean distance. 
Let $\eta_\delta$ and $\omega_\delta$ be given in Sections~\ref{s.def_eta^loop} and~\ref{s.bernoulli_pert} respectively.
We state the following conditions on $(\eta_\delta,\omega_\delta)$:

\begin{enumerate}[start=0,label={\rm(H\arabic*)}]
\item \label{i.H_basics}
For every $\delta>0$, the Markov property~\eqref{e.markov_prop} holds for $\eta_\delta$.

\item 
\label{i.H_eta_cross_similar} 
    For every $A \in \mathcal{A}_{\D}^{\mathrm{dya}}$, there is a family $(A^\eps)_{\eps>0}$ of piecewise smooth annuli such that
    \begin{itemize}
        \item for sufficiently small $\eps$, $\partial_0 A^\eps$ is in the interior of $A$, $\partial_0A$ is inside $\partial_0 A^\eps$, and $\partial_1 A^\eps = \partial_1 A$;
        \item the following holds:
        \begin{align*}
            \lim_{\eps\to 0}\limsup_{\delta\to 0} \P_{\D_\delta}^\emptyset\Ll( \circledcirc_{A^\eps}^{\eta_\delta}\setminus\circledcirc_{A}^{\eta_\delta} \Rr)=0.
        \end{align*}
    \end{itemize}

\item 
\label{i.H_bdy_conn}
For every $R>0$ and $x\in\R^2$,
\begin{align*}
    \lim_{r\to0}\limsup_{\delta\to0}\sup_K \P_K^\emptyset\Ll( x+[-R,R]^2 \stackrel{\omega_\delta\cap K}{\longleftrightarrow} (\partial K)_r \Rr) = 0
\end{align*}
where the supremum is taken over every $\delta$-disc $K\subset \D_\delta$ satisfying $x+[-2R,2R]^2 \subset K\not\subset x+[-3R,3R]^2  $.
\end{enumerate}

The first two conditions are imposed only on $\eta_\delta$. Condition~\ref{i.H_basics} is compatible with the Markovian restriction property for $\cle(\kappa)$ with $\kappa\in(\frac{8}{3},4]$ (see \cite[Section~1.2]{sheffield2012conformal}). Condition~\ref{i.H_eta_cross_similar} is weaker than the continuity of the probability of crossing events in terms of the annulus. We only need the probability of crossing $A$ to be approximated by that of $A_\eps$ inside $A$. 
Also, $\mathcal{A}^\mathrm{dya}_\D$ can be replaced by any dense (in the metric in~\eqref{e.d_(A_D)}) collection of piecewise smooth annuli.

The only condition on $\omega_\delta$ is~\ref{i.H_bdy_conn}, which requires $\eta_\delta$ and $\omega_\delta$ to have similar connectivity. Under $\P^\emptyset_K$, assuming that the limit of $\eta_\delta$ consists of disjoint simple loops, we expect that the loops of $\eta_\delta$ in $K$ do not touch $\partial K$. Condition~\ref{i.H_bdy_conn} also ensures that a similar event does not happen for $\omega_\delta$, meaning that the additional edges in $\omega_\delta$ are not numerous enough to make macroscopic disconnected sets in $\eta_\delta$ become connected in $\omega_\delta$.

The key consequence of~\ref{i.H_basics}--\ref{i.H_bdy_conn} is Lemma~\ref{l.discon_bdy} which implies that the probability of $\eta_\delta$ crossing any annulus $A$ is comparable to that of $\omega_\delta$. This is further used in Section~\ref{s.id_limits} to show that they have the same law in the limit in the annulus-crossing space.

\section{Loop decomposition of sourceless configurations}\label{s.loop_decomp}

Fix $\delta>0$, a typical realization $\eta_\delta$ under $\P^\emptyset_{\D_\delta}$ satisfies $\partial_{\D_\delta} \eta_\delta = \emptyset$. We want to decompose $\eta_\delta$ into a collection of loops. We approach this in a more general setting.

\subsection{More definitions}
\label{s.loop_properties}

\subsubsection{Equivalent classes}Throughout, loops are identified up to reparametrization. Continuous loops $l$ and $l'$ are identified with each other if there is a homeomorphism $\phi:\S^1 \to \S^1$ with counterclockwise orientation such that $l = l'\circ\phi$. For discrete loops, besides using this definition for their continuous versions, we have a simpler criterion: $l=(x_0,\dots,x_n)$ is identified with $l'=(x'_0,\dots,x'_n)$ if there is $k\in\Z$ such that $x_i=x'_{i+k}$ for all $i\in\Z$ where $i$ and $i+k$ are integers modulo $n$.

\subsubsection{Weakly simple loops}

A $\delta$-discrete loop $l=(x_0,x_1,\dots,x_n)$ with $x_0=x_n$ is called \textbf{weakly simple} if the edges $(x_ix_{i+1})_{i=1}^n$ are distinct and, whenever for some $x_k\in \VV(l)$, $yx_k \in \EE(l)$ for all $y\sim x_k$, it holds that $x_{k-1}x_k$ is perpendicular to $x_kx_{k+1}$. Note that the vertices of $l$ are not required to be distinct and the same vertex can be visited twice, meaning that being ``weakly simple'' is weaker than being ``simple'', but is stronger than being ``non-self-crossing''. 

For a weakly simple $\delta$-discrete loop $l$, its \textbf{orientation} (\textbf{counterclockwise} or \textbf{clockwise}) is defined in the standard way. For instance, one can find a sequence of simple loops that approximates the continuous version of $l$ in the uniform topology; and then use complex analysis to define the orientation for the simple loops; and finally declare the orientation of $l$ as the limit. 

\subsubsection{Loops as planar subsets}\label{s.loop_subset_R^2}

For two loops $l$ and $l'$ we write $l=_{\R^2}l'$ if their continuous versions are the same in $\R^2$, which is equivalent to $\EE(l) = \EE(l')$ for discrete loops.

Whenever a loop $l$ is involved in a set-theoretic operations ($\cup$, $\cap$, $\setminus$, and etc.), $l$ is viewed as a subset of $\R^2$.

For two collections $L$ and $L'$ of loops, we write $L\subset_{\R^2} L'$ if for every $l\in L$ there is $l'\in L'$ such that $l=_{\R^2} l'$; and write $L=_{\R^2} L'$ if $L\subset_{\R^2} L'$ and $L'\subset_{\R^2} L$.

\subsubsection{Dual graphs}\label{s.dual_graphs}
We set $(\Z^2)^* = (\frac{1}{2},\frac{1}{2})+\Z$. We give $(\Z^2)^*$ the obvious graph theoretic structure and represent the dual lattice of $\delta \Z^2$ by $\delta (\Z^2)^*$. For objects on $\delta(\Z^2)^*$, we use the same notation $\EE(\cdot)$, $\VV(\cdot)$, $\sim$, $\partial$ and the same terminology, the definitions of which are adapted in the obvious way.

For each edge $e=xy \in \delta \Z^2$, its \textbf{dual edge}, denoted by $e^*=(xy)^*$, is the unique element in $\EE(\delta (\Z^2)^*)$ that intersects $e$ as a subset of $\R^2$.

As usual, we call $\delta \Z^2$ the primal lattice.
In the following, discrete objects are assumed to be on the primal lattice if not specified.

Given a $\delta$-discrete disc $K$, we define its \textbf{dual disc} $K^*$ by first taking $\gamma$ to be the simple loop in the dual lattice that surrounds the loop $\partial K$ (see Section~\ref{s.surround} for ``surround'') and then taking $K^*$ to be the $\delta$-discrete disc on the dual lattice with $\partial K^* =\gamma$.

\subsubsection{Surrounding loops}\label{s.surround}

Given a $\delta$-discrete weakly simple loop $l$ on the primal lattice $\delta\Z^2$ and a $\delta$-discrete simple loop $l'$ on the dual lattice $\delta (\Z^2)^*$, we say that $l'$ \textbf{surrounds} $l$ if for every $e\in \EE(l)$ the endpoint of $e^*$ outside $l$ is in $\VV(l')$.

\subsubsection{Disjointedness}
Two loops $l$ and $l'$ are said to be \textbf{disjoint} if $l\cap l'=\emptyset$, understood as in Section~\ref{s.loop_subset_R^2}. If they are $\delta$-discrete, then it is equivalent to $\VV(l)\cap \VV(l')=\emptyset$. Two $\delta$-discrete loops $l$ and $l'$ are said to be \textbf{weakly disjoint} if $\EE(l)\cap \EE(l')=\emptyset$, meaning that they may share vertices. A collection is said to contain disjoint (resp.\ weakly disjoint) loops if those loops are pairwise disjoint (resp.\ weakly disjoint).

\subsection{Constructions}

Let $K$ be a $\delta$-discrete domain and $\kappa$ be a percolation configuration on $K$ satisfying the sourceless condition $\partial_K \kappa=\emptyset$.

\subsubsection{Loop decompositions}

A \textbf{loop decomposition (on $K$)} of $\kappa$ is a collection $L$ of disjoint weakly simple loops which satisfies
\begin{align}\label{e.kappa_e=1<=>...}
    \forall e\in \EE(K):\quad \Ll(\ \kappa_e = 1\quad \Longleftrightarrow\quad \text{$e \in \EE(l)$ for some $l\in L$} \ \Rr).
\end{align}
Note that
\begin{align}\label{e.decompsoable=>sourceless}
    \textit{a loop decomposition of $\kappa$ exists }\quad\Longrightarrow \quad \partial_{G} \kappa = \emptyset
\end{align}
because any endpoint $x$ to some open edge $e$ is an endpoint to one more or three more open edges since $e$ must be on a loop and the loops the loop decomposition are disjoint.

\subsubsection{Basic loop collections}\label{s.loop_construct_1}

We define the dual configuration $\kappa^*$ on the dual domain $K^*$ by setting $\kappa^*_{e^*} = 1-\kappa_e$ for edges $e^*\in \EE(K^*)$ whose primal counterparts are in $\EE(K)$; setting $\kappa^*_{e'}=1$ for the remaining edges in $\EE(K^*)$. As in the primal case, we view $\kappa^*$ as a graph with edges open in $\kappa^*$ and vertices being all the endpoints.

We start by taking all possible loops on open edges in $\kappa$. We set
\begin{align*}
    L^\mathrm{all} = \Ll\{\text{$\delta$-discrete weakly simple loop } l:\: e\in \EE(l)\ \Longleftrightarrow \ \kappa_e =1\Rr\}.
\end{align*}
This is a large collection of loops which are not disjoint in general. Then, a weakly simple loop $l\in L^\mathrm{all}$ is said to be \textbf{outmost} if there is a simple loop $l'\in L^*$ that surrounds $l$. Hence, no loop in $L^\mathrm{all}$ can touch $l$ from the outside of $l$.
We now refine the collection in steps.

We iteratively define the level of loops and the corresponding collections (see Figure~\ref{fig:picl^all}).
Since $K$ is a disc, its boundary $\partial K$ is a simple loop. We set $\tilde L_0 = \{\partial K\}$ as a reference collection and we emphasize that $\partial K$ may not be a loop in $\kappa$.
We proceed inductively. Given a collection $\tilde L_i$, we construct $\tilde L_{i+1}$ as follows.
Let $l$ be any loop in $L^\mathrm{all}\setminus \cup_{j=1}^i \tilde L_j$. 
We put $l$ in $\tilde L_{i+1}$ if
\begin{itemize}
    \item there is some $l'\in \tilde L_i$ such that $l$ is inside $l'$,
    \item and there are $e \in l$ and $e'\in l'$ such that $e^*\stackrel{\kappa^*}{\longleftrightarrow} (e')^*$
\end{itemize}
(note that $\kappa^*=0$ on $e^*$ and $(e')^*$; hence, $\kappa^*$ connects one endpoint of $e^*$ to one of $(e')^*$). Iteratively, we exhaust $L$ and construct $(\tilde L_i)_{i=1}^\infty$. Since we work on a discrete lattice, $\tilde L_i$ is empty for sufficiently large $i$. 

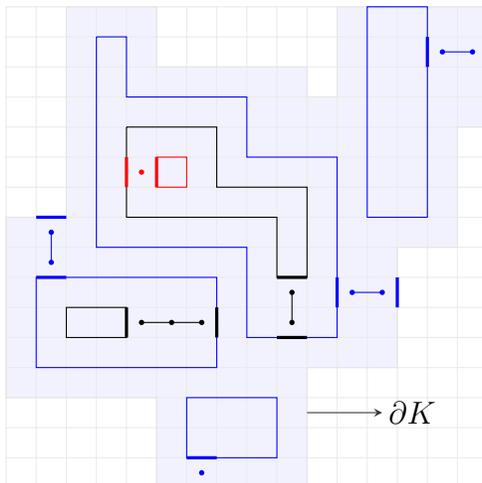
\begin{figure}
    \centering
    \begin{tikzpicture}
\fill[blue!5]
(-3.2,-2) -| (-1.2,-3.2) -| (0.8,-1.6) -| (2,0) -| (2.8,1.6) -| (3.2,3.2) -| (1.2,2) -| (0.8,2.4) -| (-1.2,3.2) -| (-2.4,0.4) -| (-3.2,-2);
\draw[black!8, step=4mm] (-3.21,-3.21) grid (3.2,3.2);
\draw[black!10] (-3.2,-2) -| (-1.2,-3.2) -| (0.8,-1.6) -| (2,0) -| (2.8,1.6) -| (3.2,3.2) -| (1.2,2) -| (0.8,2.4) -| (-1.2,3.2) -| (-2.4,0.4) -| (-3.2,-2);
\draw[blue!]
(-2.8,-1.6) rectangle (-0.4,-0.4)
(-2,2.8) -| (-1.6,2) -| (0,1.2) -| (1.2,-1.2) -| (0,0) -| (-2,2.8)
(1.6,0.4) rectangle (2.4,3.2)
(-0.8,-2.8) rectangle (0.4,-2)
;
\draw[black!]
(-2.4,-1.2) rectangle (-1.6,-0.8)
(-1.6,1.6) -| (-0.4,0.8) -| (0.8,-0.4) -| (0.4,0.4) -| (-1.6,1.6)
;
\draw[red]
(-1.2,0.8) rectangle (-0.8,1.2);
\draw[very thick, red]
(-1.2,0.8)--(-1.2,1.2)
(-1.6,0.8)--(-1.6,1.2)
;
\fill[red](-1.4,1) circle (1pt);
\draw[very thick, black]
(-1.6,-1.2)--(-1.6,-0.8)
(-0.4,-1.2)--(-0.4,-0.8)
(0.4,-0.4)--(0.8,-0.4)
(0.4,-1.2)--(0.8,-1.2)
;
\draw[very thick, blue]
(-2.8,-0.4)--(-2.4,-0.4)
(-2.8,0.4)--(-2.4,0.4)
(-0.8,-3.2)--(-0.4,-3.2)
(-0.8,-2.8)--(-0.4,-2.8)
(1.2,-0.8)--(1.2,-0.4)
(2,-0.8)--(2,-0.4)
(2.4,2.4)--(2.4,2.8)
(3.2,2.4)--(3.2,2.8)
;
\draw[black]
(-1.4,-1)--(-0.6,-1)
(0.6,-0.6)--(0.6,-1)
;
\draw[blue]
(-2.6,-0.2)--(-2.6,0.2)
(1.8,-0.6)--(1.4,-0.6)
(2.6,2.6)--(3,2.6)
;
\fill[black]
(-1.4,-1) circle (1pt)
(-1,-1) circle (1pt)
(-0.6,-1) circle (1pt)
(0.6,-0.6) circle (1pt)
(0.6,-1) circle (1pt)
;
\fill[blue]
(-2.6,-0.2) circle (1pt)
(-2.6,0.2) circle (1pt)
(-0.6,-3) circle (1pt)
(1.8,-0.6) circle (1pt)
(1.4,-0.6) circle (1pt)
(2.6,2.6) circle (1pt)
(3,2.6) circle (1pt)
;
\draw[-stealth,black!80] (0.8,-2.2) -- (1.8,-2.2);
\draw[] (2.2,-2.2) node{\large $\partial K$};
\end{tikzpicture}

    \caption{The shaded region is the $\delta$-discrete domain. The loops in blue, black, and red are in $\tilde L_1$, $\tilde L_2$, and $\tilde L_3$, respectively. For each loop $l \in \tilde L_i$ in the figure, there is a path (in the respective color) on the dual domain connecting $l$ to some $l'\in \tilde L_{i-1}$.}
    \label{fig:picl^all}
\end{figure}

The final touch is to fix orientations to avoid repetition of loops identical up to orientation. Let $L_i$ be the collections of the loops in $\tilde L_i$ with the following convention of orientation:
\begin{align}\label{e.orientation}
    \text{clockwise if $i$ is odd; and counterclockwise if $i$ is even.}
\end{align}
For $i\geq 1$, the loops in $L_i$ are said to be \textbf{level-$i$}. For each $i$, let $L_i^\most \subset L_i$ be the collection of level-$i$ outmost loops.

Similar to $L^\mathrm{all}$, we set
\begin{align*}
    L^* = \Ll\{\text{$\delta$-discrete loop $l'$ on the dual lattice}:\: e'\in \EE(l')\ \Longleftrightarrow \ \kappa^*_{e'} =1\Rr\}.
\end{align*}

\subsubsection{Peeling procedure}\label{s.loop_construct_2}

For each level $i$, we need a more refined collection $L_i^\peel$ than $L_i$. The procedure is to inductively ``peel'' loops from $\kappa$ for each level (see the first row in Figure~\ref{fig:peelconc}). 

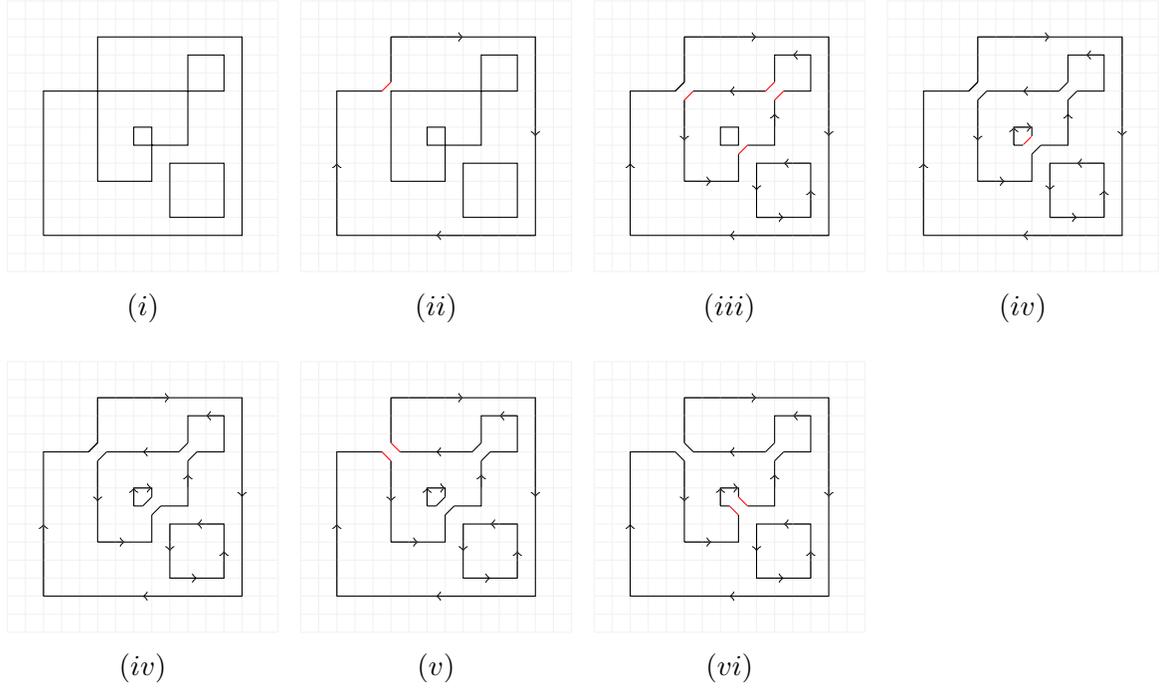
\begin{figure}
    \centering
    \begin{tikzpicture}[scale = 0.6]
\node (A) at (0.8, 0.8) { };
\node (B) at (5.2, 5.2) { };
\node (C) at (2,4) { };
\node (D) at (3.2,2) { };
\node (E) at (2.8,2.8) { };
\node (F) at (3.2,3.2) { };
\node (G) at (4,3.8) { };
\node (H) at (4.8,5) { };
\node (I) at (4,5.8) { };
\node (J) at (3.6,1.2) { };
\node (K) at (4.8,2.4) { };
\draw[black!5, step=4mm, xshift=-70mm] (0,0) grid (6,6);
\draw[black,xshift=-70mm]
(0.8,0.8) -| (5.2,5.2) -| (2,4) -| (0.8,0.8) 
(2,2) |- (4,4) |- (4.8,4.8) |- (4,4) |- (3.2,2.8) |- (2.8,3.2) |- (3.2,2.8) |- (2,2)
(3.6,1.2) rectangle (4.8,2.4)
;

\draw[-to,xshift=-5mm] (0.8,0.8) -- (0.8,2.4);
\draw[-to,xshift=-5mm] (2,4.2) |- (3.6,5.2);
\draw[-to,xshift=-5mm] (5.2,5.2) -- (5.2,3);
\draw[-to,xshift=-5mm] (5.2,0.8) -- (3,0.8);
\draw[black!5, step=4mm,xshift=-5mm] (0,0) grid (6,6);
\draw[black,xshift=-5mm] (0.8,0.8) -- (5.2,0.8) -- (5.2,5.2) -- (2,5.2) -- (2,4.2)
(1.8,4) -| (0.8,0.8)
(2,2) |- (4,4) |- (4.8,4.8) |- (4,4) |- (3.2,2.8) |- (2.8,3.2) |- (3.2,2.8) |- (2,2)
(3.6,1.2) rectangle (4.8,2.4);

\draw[red,xshift=-5mm] (1.8,4) -- (2,4.2);

\draw[-to,xshift=60mm] (0.8,0.8) -- (0.8,2.4);
\draw[-to,xshift=60mm] (2,4.2) |- (3.6,5.2);
\draw[-to,xshift=60mm] (5.2,5.2) -- (5.2,3);
\draw[-to,xshift=60mm] (5.2,0.8) -- (3,0.8);
\draw[-to,xshift=60mm] (3.6,1.2) -- (4.2,1.2);
\draw[-to,xshift=60mm] (4.8,1.2) -- (4.8,1.8);
\draw[-to,xshift=60mm] (4.8,2.4) -- (4.2,2.4);
\draw[-to,xshift=60mm] (3.6,2.4) -- (3.6,1.8);
\draw[-to,xshift=60mm] (2,3.8) -- (2,2.9);
\draw[-to,xshift=60mm] (2,2) -- (2.6,2);
\draw[-to,xshift=60mm] (4,3.2) -- (4,3.5);
\draw[-to,xshift=60mm] (4.8,4.8) -- (4.4,4.8);
\draw[-to,xshift=60mm] (3.8,4) -- (3,4);
\draw[black!5, step=4mm, xshift=60mm] (0,0) grid (6,6);
\draw[black,xshift=60mm] (0.8,0.8) -- (5.2,0.8) -- (5.2,5.2) -- (2,5.2) -- (2,4.2)
(2,4.2) -- (1.8,4)
(1.8,4) -| (0.8,0.8)
(2,2) -- (2,3.8)
(2.2,4) -- (3.8,4)

(4,4.2) |- (4.8,4.8) |- (4.2,4)
(4,3.8) |- (3.4,2.8)
(3.2,2.6) |- (2,2)
(2.8,2.8) rectangle (3.2,3.2)
(3.6,1.2) rectangle (4.8,2.4);

\draw[black,xshift=60mm] (1.8,4) -- (2,4.2);
\draw[red,xshift=60mm] (2,3.8) -- (2.2,4)
(3.2,2.6) -- (3.4,2.8)
(3.8,4) -- (4,4.2)
(4,3.8) -- (4.2,4)
;

\draw[red,xshift=125mm] (3,2.8) -- (3.2,3);
\draw[-to,xshift=125mm] (0.8,0.8) -- (0.8,2.4);
\draw[-to,xshift=125mm] (2,4.2) |- (3.6,5.2);
\draw[-to,xshift=125mm] (5.2,5.2) -- (5.2,3);
\draw[-to,xshift=125mm] (5.2,0.8) -- (3,0.8);
\draw[-to,xshift=125mm] (3.6,1.2) -- (4.2,1.2);
\draw[-to,xshift=125mm] (4.8,1.2) -- (4.8,1.8);
\draw[-to,xshift=125mm] (4.8,2.4) -- (4.2,2.4);
\draw[-to,xshift=125mm] (3.6,2.4) -- (3.6,1.8);
\draw[-to,xshift=125mm] (2,3.8) -- (2,2.9);
\draw[-to,xshift=125mm] (2,2) -- (2.6,2);
\draw[-to,xshift=125mm] (4,3.2) -- (4,3.5);
\draw[-to,xshift=125mm] (4.8,4.8) -- (4.4,4.8);
\draw[-to,xshift=125mm] (3.8,4) -- (3,4);
\draw[-to,xshift=125mm] (2.8,2.8) -- (2.8,3.2);
\draw[-to,xshift=125mm] (2.8,3.2) -- (3.2,3.2);
\draw[black!5, step=4mm, xshift=125mm] (0,0) grid (6,6);
\draw[black,xshift=125mm] (0.8,0.8) -- (5.2,0.8) -- (5.2,5.2) -- (2,5.2) -- (2,4.2)
(2,4.2) -- (1.8,4)
(1.8,4) -| (0.8,0.8)
(2,2) -- (2,3.8)
(2.2,4) -- (3.8,4)

(4,4.2) |- (4.8,4.8) |- (4.2,4)
(4,3.8) |- (3.4,2.8)
(3.2,2.6) |- (2,2)
(2.8,2.8) |- (3.2,3.2) -- (3.2,3)
(3,2.8) -- (2.8,2.8)
(3.6,1.2) rectangle (4.8,2.4);
\draw[black,xshift=125mm] (1.8,4) -- (2,4.2)
(2,3.8) -- (2.2,4)
(3.2,2.6) -- (3.4,2.8)
(3.8,4) -- (4,4.2)
(4,3.8) -- (4.2,4)
;

\draw[-to,yshift=-80mm,xshift=-70mm] (0.8,0.8) -- (0.8,2.4);
\draw[-to,yshift=-80mm,xshift=-70mm] (2,4.2) |- (3.6,5.2);
\draw[-to,yshift=-80mm,xshift=-70mm] (5.2,5.2) -- (5.2,3);
\draw[-to,yshift=-80mm,xshift=-70mm] (5.2,0.8) -- (3,0.8);
\draw[-to,yshift=-80mm,xshift=-70mm] (3.6,1.2) -- (4.2,1.2);
\draw[-to,yshift=-80mm,xshift=-70mm] (4.8,1.2) -- (4.8,1.8);
\draw[-to,yshift=-80mm,xshift=-70mm] (4.8,2.4) -- (4.2,2.4);
\draw[-to,yshift=-80mm,xshift=-70mm] (3.6,2.4) -- (3.6,1.8);
\draw[-to,yshift=-80mm,xshift=-70mm] (2,3.8) -- (2,2.9);
\draw[-to,yshift=-80mm,xshift=-70mm] (2,2) -- (2.6,2);
\draw[-to,yshift=-80mm,xshift=-70mm] (4,3.2) -- (4,3.5);
\draw[-to,yshift=-80mm,xshift=-70mm] (4.8,4.8) -- (4.4,4.8);
\draw[-to,yshift=-80mm,xshift=-70mm] (3.8,4) -- (3,4);
\draw[-to,yshift=-80mm,xshift=-70mm] (2.8,2.8) -- (2.8,3.2);
\draw[-to,yshift=-80mm,xshift=-70mm] (2.8,3.2) -- (3.2,3.2);
\draw[black!5, step=4mm, yshift=-80mm,xshift=-70mm] (0,0) grid (6,6);
\draw[black,yshift=-80mm,xshift=-70mm] (0.8,0.8) -- (5.2,0.8) -- (5.2,5.2) -- (2,5.2) -- (2,4.2)
(2,4.2) -- (1.8,4)
(1.8,4) -| (0.8,0.8)
(2,2) -- (2,3.8)
(2.2,4) -- (3.8,4)

(4,4.2) |- (4.8,4.8) |- (4.2,4)
(4,3.8) |- (3.4,2.8)
(3.2,2.6) |- (2,2)
(2.8,2.8) |- (3.2,3.2) -- (3.2,3)
(3,2.8) -- (2.8,2.8)
(3.6,1.2) rectangle (4.8,2.4);
\draw[black,yshift=-80mm,xshift=-70mm] (1.8,4) -- (2,4.2)
(2,3.8) -- (2.2,4)
(3.2,2.6) -- (3.4,2.8)
(3.8,4) -- (4,4.2)
(4,3.8) -- (4.2,4)
(3,2.8) -- (3.2,3);

\draw[-to,yshift=-80mm,xshift=-5mm] (0.8,0.8) -- (0.8,2.4);
\draw[-to,yshift=-80mm,xshift=-5mm] (2,4.2) |- (3.6,5.2);
\draw[-to,yshift=-80mm,xshift=-5mm] (5.2,5.2) -- (5.2,3);
\draw[-to,yshift=-80mm,xshift=-5mm] (5.2,0.8) -- (3,0.8);
\draw[-to,yshift=-80mm,xshift=-5mm] (3.6,1.2) -- (4.2,1.2);
\draw[-to,yshift=-80mm,xshift=-5mm] (4.8,1.2) -- (4.8,1.8);
\draw[-to,yshift=-80mm,xshift=-5mm] (4.8,2.4) -- (4.2,2.4);
\draw[-to,yshift=-80mm,xshift=-5mm] (3.6,2.4) -- (3.6,1.8);
\draw[-to,yshift=-80mm,xshift=-5mm] (2,3.8) -- (2,2.9);
\draw[-to,yshift=-80mm,xshift=-5mm] (2,2) -- (2.6,2);
\draw[-to,yshift=-80mm,xshift=-5mm] (4,3.2) -- (4,3.5);
\draw[-to,yshift=-80mm,xshift=-5mm] (4.8,4.8) -- (4.4,4.8);
\draw[-to,yshift=-80mm,xshift=-5mm] (3.8,4) -- (3,4);
\draw[-to,yshift=-80mm,xshift=-5mm] (2.8,2.8) -- (2.8,3.2);
\draw[-to,yshift=-80mm,xshift=-5mm] (2.8,3.2) -- (3.2,3.2);

\draw[black!5, step=4mm, yshift=-80mm,xshift=-5mm] (0,0) grid (6,6);
\draw[black,yshift=-80mm,xshift=-5mm] (0.8,0.8) -- (5.2,0.8) -- (5.2,5.2) -- (2,5.2) -- (2,4.2)
(1.8,4) -| (0.8,0.8)
(2,2) -- (2,3.8)
(2.2,4) -- (3.8,4)
(4,4.2) |- (4.8,4.8) |- (4.2,4)
(4,3.8) |- (3.4,2.8)
(3.2,2.6) |- (2,2)
(2.8,2.8) |- (3.2,3.2) -- (3.2,3)
(3,2.8) -- (2.8,2.8)
(3.6,1.2) rectangle (4.8,2.4);
\draw[black,yshift=-80mm,xshift=-5mm] 
(3.2,2.6) -- (3.4,2.8)
(3.8,4) -- (4,4.2)
(4,3.8) -- (4.2,4)
(3,2.8) -- (3.2,3);
\draw[red,yshift=-80mm,xshift=-5mm] (2,3.8) -- (1.8,4) (2.2,4) -- (2,4.2);

\draw[-to,yshift=-80mm,xshift=60mm] (0.8,0.8) -- (0.8,2.4);
\draw[-to,yshift=-80mm,xshift=60mm] (2,4.2) |- (3.6,5.2);
\draw[-to,yshift=-80mm,xshift=60mm] (5.2,5.2) -- (5.2,3);
\draw[-to,yshift=-80mm,xshift=60mm] (5.2,0.8) -- (3,0.8);
\draw[-to,yshift=-80mm,xshift=60mm] (3.6,1.2) -- (4.2,1.2);
\draw[-to,yshift=-80mm,xshift=60mm] (4.8,1.2) -- (4.8,1.8);
\draw[-to,yshift=-80mm,xshift=60mm] (4.8,2.4) -- (4.2,2.4);
\draw[-to,yshift=-80mm,xshift=60mm] (3.6,2.4) -- (3.6,1.8);
\draw[-to,yshift=-80mm,xshift=60mm] (2,3.8) -- (2,2.9);
\draw[-to,yshift=-80mm,xshift=60mm] (2,2) -- (2.6,2);
\draw[-to,yshift=-80mm,xshift=60mm] (4,3.2) -- (4,3.5);
\draw[-to,yshift=-80mm,xshift=60mm] (4.8,4.8) -- (4.4,4.8);
\draw[-to,yshift=-80mm,xshift=60mm] (3.8,4) -- (3,4);
\draw[-to,yshift=-80mm,xshift=60mm] (2.8,2.8) -- (2.8,3.2);
\draw[-to,yshift=-80mm,xshift=60mm] (2.8,3.2) -- (3.2,3.2);
\draw[] 
(-40mm,-8mm) node {$(i)$}
(25mm,-8mm) node {$(ii)$}
(90mm,-8mm) node {$(iii)$}
(155mm,-8mm) node {$(iv)$}
(-40mm,-88mm) node {$(iv)$}
(25mm,-88mm) node {$(v)$}
(90mm,-88mm) node {$(vi)$}
;
\draw[black!5, step=4mm, yshift=-80mm,xshift=60mm] (0,0) grid (6,6);
\draw[black,yshift=-80mm,xshift=60mm] (0.8,0.8) -- (5.2,0.8) -- (5.2,5.2) -- (2,5.2) -- (2,4.2)
(1.8,4) -| (0.8,0.8)
(2,2) -- (2,3.8)
(2.2,4) -- (3.8,4)
(4,4.2) |- (4.8,4.8) |- (4.2,4)
(4,3.8) |- (3.4,2.8)
(3.2,2.6) |- (2,2)
(2.8,2.8) |- (3.2,3.2) -- (3.2,3)
(3,2.8) -- (2.8,2.8)
(3.6,1.2) rectangle (4.8,2.4);
\draw[black,yshift=-80mm,xshift=60mm] 
(3.8,4) -- (4,4.2)
(4,3.8) -- (4.2,4)
(2,3.8) -- (1.8,4) 
(2.2,4) -- (2,4.2)
;
\draw[red,yshift=-80mm,xshift=60mm]
(3.2,2.6) -- (3,2.8)
(3.4,2.8) -- (3.2,3)
;
\end{tikzpicture}
    \caption{The first row $(i)\to(ii)\to(iii)\to(iv)$ shows a peeling procedure. The second row $(iv)\to(v)\to(vi)$ shows a concatenation procedure. The $45^\circ$- and $135^\circ$-edges are purely symbolic, indicating the direction of turns taken by a loop. Changes in turns are marked in red.}
    \label{fig:peelconc}
\end{figure}

We start by setting $L_0^\peel =\emptyset$ and $\kappa^{0}=\kappa$. Clearly, $\partial_K \kappa^0 =\emptyset$

Given $L_i^\peel$ and $\kappa^{i}$ satisfying $\partial_K \kappa^i =\emptyset$, we define the dual configuration $(\kappa^{i})^*$ in the same way as before.
We set
\begin{align*}
    E = \Ll\{e\in \EE(\kappa^{i}):\quad \exists e' \in \EE(\partial K^*) :\: e^*\stackrel{(\kappa^{i})^*}{\longleftrightarrow} e'\Rr\},
\end{align*}
where we view $\partial K^*$ as a loop.
Heuristically, $E$ contains exactly the open edges in $\kappa^{i}$ that are ``exposed'' to $\partial K^*$. We peel $E$ into loops.

Let us fix an index on the edges in $E$. We construct a loop $l$ by the following algorithm:
\begin{enumerate}
    \item Start from the edge $e \in E$ with the smallest index and let $x_0$, $x_1$ be the endpoints of $e$.
    \item Assuming that $x_j$ is chosen and $x_{j-1}x_j\in E$, we choose $x_{j+1}$ as follows. Let $\gamma$ be a path in $(\kappa)^*$ such that $(x_{j-1}x_j)^*\stackrel{\gamma}{\longleftrightarrow} e'$ for some $e'\in \EE(\partial K^*)$. Due to $\partial_K \kappa^i =\emptyset$, the number $n_j$ of open edges in $\kappa^i$ with endpoint $x_j$ is even, so there are two cases (see Figure~\ref{fig:picpeel}):
    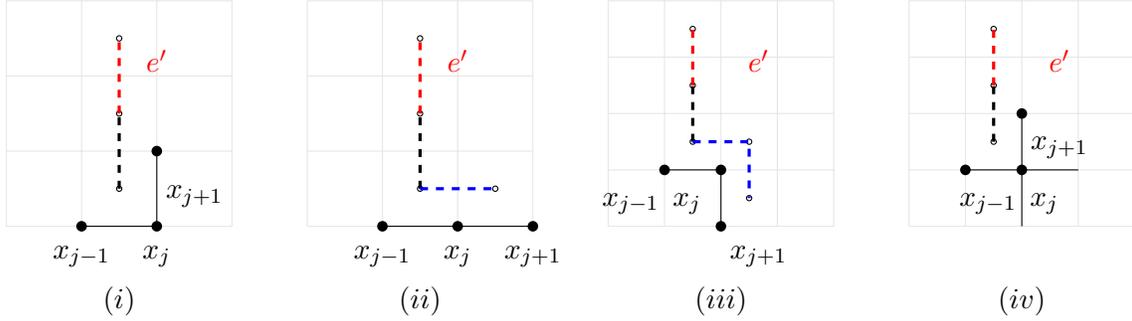
\begin{figure}
    \centering
    \begin{tikzpicture}
\draw[black!10,step=10mm] (0,0) grid (3,3);
\fill[](1,0) circle (2pt) (2,0) circle (2pt) (2,1) circle (2pt);
\draw[] (1,0) -- (2,0) (2,0) -- (2,1)
(1.5,0.5) circle (1pt)
(1.5,1.5) circle (1pt)
(1.5,2.5) circle (1pt)
(1,-0.4) node {$x_{j-1}$}
(2,-0.4) node {$x_{j}$}
(2.5,0.4) node {$x_{j+1}$}
(1.5,-1) node {$(i)$}
;
\draw[dashed, very thick] (1.5,0.5) -- (1.5,1.5);
\draw[red,dashed,very thick] (1.5,1.5) -- (1.5,2.5);
\draw[red] (2,2.2) node {$e'$};

\draw[black!10,step=10mm,xshift=40mm] (0,0) grid (3,3);
\fill[xshift=40mm](1,0) circle (2pt) (2,0) circle (2pt) (3,0) circle (2pt);
\draw[xshift=40mm] (1,0) -- (2,0) (2,0) -- (3,0)
(1.5,0.5) circle (1pt)
(1.5,1.5) circle (1pt)
(1.5,2.5) circle (1pt)
(2.5,0.5) circle (1pt)
(1,-0.4) node {$x_{j-1}$}
(2,-0.4) node {$x_{j}$}
(3,-0.4) node {$x_{j+1}$}
(1.5,-1) node {$(ii)$}
;
\draw[dashed,xshift=40mm,very thick] (1.5,0.5) -- (1.5,1.5);
\draw[red,dashed,very thick,xshift=40mm] (1.5,1.5) -- (1.5,2.5);
\draw[blue,dashed,xshift=40mm,very thick] (1.5,0.5) -- (2.5,0.5);
\draw[red,xshift=40mm] (2,2.2) node {$e'$};

\draw[black!10,step=7.5mm,xshift=80mm] (0,0) grid (3,3);
\fill[xshift=80mm](0.75,0.75) circle (2pt) (1.5,0.75) circle (2pt) (1.5,0) circle (2pt);
\draw[xshift=80mm] (0.75,0.75) -- (1.5,0.75) -- (1.5,0)
(1.125,1.125) circle (1pt)
(1.125,1.875) circle (1pt)
(1.125,2.625) circle (1pt)
(1.875,1.125) circle (1pt)
(1.875,0.375) circle (1pt)
(0.3,0.3) node {$x_{j-1}$}
(1.05,0.3) node {$x_{j}$}
(2,-0.4) node {$x_{j+1}$}
(1.5,-1) node {$(iii)$}
;
\draw[dashed,xshift=80mm,very thick] (1.125,1.125) -- (1.125,1.875);
\draw[red,dashed,very thick,xshift=80mm] (1.125,1.875) -- (1.125,2.625);
\draw[blue,dashed,xshift=80mm,very thick] (1.125,1.125) -- (1.875,1.125) -- (1.875,0.375);
\draw[red,xshift=80mm] (2,2.2) node {$e'$};

\draw[black!10,step=7.5mm,xshift=120mm] (0,0) grid (3,3);
\fill[xshift=120mm](0.75,0.75) circle (2pt) (1.5,0.75) circle (2pt) (1.5,1.5) circle (2pt);
\draw[xshift=120mm] (0.75,0.75) -- (1.5,0.75) -- (1.5,0)
(1.5,1.5) -- (1.5,0.75) -- (2.25,0.75)
(1.125,1.125) circle (1pt)
(1.125,1.875) circle (1pt)
(1.125,2.625) circle (1pt)

(1.05,0.3) node {$x_{j-1}$}
(1.8,0.3) node {$x_{j}$}
(2,1.05) node {$x_{j+1}$}
(1.5,-1) node {$(iv)$}
;
\draw[dashed,xshift=120mm,very thick] (1.125,1.125) -- (1.125,1.875);
\draw[red,dashed,very thick,xshift=120mm] (1.125,1.875) -- (1.125,2.625);

\draw[red,xshift=120mm] (2,2.2) node {$e'$};
\end{tikzpicture}
    \caption{Case~\eqref{i.case:n_j=2} is illustrated in sub-cases $(i), (ii), (iii)$. 
    Case~\eqref{i.case:n_j=4} is illustrated in $(iv)$. The black dashed edges are on $\gamma$. The blue dashed edges are added to form $\bar \gamma$.}
    \label{fig:picpeel}
\end{figure}
    \begin{enumerate}
        \item \label{i.case:n_j=2}
        Case: $n_j=2$. We set $x_{j+1}$ to be the endpoint other than $x_{j-1}x_j$. Since the dual of the remaining edges are open in $(\kappa^i)^*$, adding to $\gamma$ some of these dual edges forms $\bar \gamma$ such that $(x_{j}x_{j+1})^*\stackrel{\bar\gamma}{\longleftrightarrow} e'$. Therefore, $x_jx_{j+1} \in E$.
        \item \label{i.case:n_j=4} Case: $n_j = 4$. The dual of one edge $\tilde e$ shares an endpoint $y$ with $(x_{j-1}x_j)^*$ and $y$ is connected by $\gamma$ to $e'$. We take $x_{j+1}$ to be the other endpoint of $\tilde e$. Hence, $x_jx_{j+1}\in E$ as it satisfies $(x_{j}x_{j+1})^*\stackrel{\gamma}{\longleftrightarrow} e'$.
    \end{enumerate}
    \item If $x_{j+1} =x_0$, then terminate and return the loop $l=(x_0,x_1,\dots, x_j)$; otherwise, repeat the second step with $j+1$ substituted for $j$.
\end{enumerate}
This gives a loop $l$ with edges in $E$ and we put $l$ in $L^\peel_{i+1}$. Then, we repeat the same algorithm starting at a new edge $e \in E$ that is not on any loop peeled so far. This eventually exhausts all edges in $E$ and we obtain the collection $L^\peel_{i+1}$.

We define $\kappa^{i+1}$ by setting $\kappa^{i+1}_e = 0$ if $\kappa^i_e =0$ or $e \in \EE(l)$ for some $l\in L^\peel_{i+1}$; and setting $\kappa^{i+1}_e = 1$ otherwise. By this construction, for every $x\in \VV(K)$, the number of open edges, with endpoint $x$, removed from $\kappa^i$ is even. Therefore, by the definition of sources in~\eqref{e.source} and the assumption that $\partial_K \kappa^i =\emptyset$, we have that $\partial_K \kappa^{i+1} = \emptyset$ which completes the induction.

We have constructed a sequence $(L^\peel_i)_{i=1}^\infty$ of loop collections by induction. 
Set 
\begin{align}\label{e.N=inf...}
    N = \inf\Ll\{n\in \N:\: L^\peel_i=\emptyset,\,\forall i>n\Rr\},
\end{align}
which is clearly finite.

\begin{lemma}\label{l.L^peel}
The following holds:
\begin{enumerate}
    \item \label{i.L^peel_edges} the collection $\cup_{i=1}^N L^\peel_i$ satisfies the property in~\eqref{e.kappa_e=1<=>...};
    \item \label{i.L^peel_non-sefl-cro} $\cup_{i=1}^NL^\peel_i$ consists of weakly-disjoint weakly simple loops;
    
    \item \label{i.L^peel_supset_L^most} $L^\most_i\subset L^\peel_i\subset L_i$, for each $i\geq 1$.
\end{enumerate}
\end{lemma}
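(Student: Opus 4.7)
The plan is to prove the three claims sequentially, with Part~(1) carrying the main existence argument and Parts~(2)--(3) reducing to a local analysis of the peeling rule plus an induction on the level.

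For Part~(1), I induct on $i$ to show that the iteration exhausts every open edge of $\kappa$. The key claim is that the set $E$ used at step $i+1$ is nonempty whenever $\kappa^{i}$ carries at least one open edge. Because $\partial_{K}\kappa^{i}=\emptyset$, the open edges of $\kappa^{i}$ decompose into weakly simple loops; picking an outermost such loop $l_{*}$, the dual faces exterior to $l_{*}$ inside $K$ form a connected $(\kappa^{i})^{*}$-component reaching $\partial K^{*}$, so every $e\in \EE(l_{*})$ satisfies $e\in E$. Each nontrivial iteration strictly decreases the number of open edges, hence $N<\infty$ and $\cup_{i=1}^{N}L^{\peel}_{i}$ obeys~\eqref{e.kappa_e=1<=>...}.

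For Part~(2), I analyze a single peeled loop $l=(x_{0},\ldots,x_{n})$. In Case~\eqref{i.case:n_j=2}, $x_{j}$ uses its only two $\kappa^{i}$-open edges in a single visit. In Case~\eqref{i.case:n_j=4}, the rule selects $\tilde e=x_{j}x_{j+1}$ whose dual shares the endpoint $y$ with $(x_{j-1}x_{j})^{*}$; two distinct dual edges sharing an endpoint in $\delta(\Z^{2})^{*}$ are necessarily perpendicular, so $x_{j}x_{j+1}\perp x_{j-1}x_{j}$, which is exactly the turn required by weak simplicity at a four-valent vertex. Consequently $x_{j}$ is visited at most twice, the two visits consume disjoint perpendicular pairs of the four edges at $x_j$, and the edges of $l$ are distinct. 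Reversibility of the local rule (the outgoing edge determines the incoming one) forces the algorithm to return to $x_{0}$ and makes each oriented edge of $E$ lie on a unique algorithmic contour, giving weak disjointness within $L^{\peel}_{i+1}$; edges peeled at step $i+1$ are zeroed in $\kappa^{i+1}$, which handles different levels.

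For Part~(3), I again induct on $i$. The containment $L^{\peel}_{i}\subset L_{i}$ holds because the dual path witnessing $e\in E$ at step $i$ can be extended through the contours $L^{\peel}_{1},\ldots,L^{\peel}_{i-1}$ peeled so far to produce the chain of dual connections to some $l''\in \tilde L_{i-1}$ that places $l$ in $\tilde L_{i}$; the orientation convention~\eqref{e.orientation} is inherited because the algorithm traces $l$ keeping the dual $\partial K^{*}$-component on a fixed side, and this side alternates with the level. For $L^{\most}_{i}\subset L^{\peel}_{i}$, a level-$i$ outmost loop $l$ comes equipped with a simple dual loop $l'\in L^{*}$ surrounding it; the simplicity of $l'$ ensures that the peelings of steps $1,\ldots,i-1$ do not sever $l'$, so every edge of $l$ still reaches $\partial K^{*}$ via a $(\kappa^{i-1})^{*}$-path at step $i$, placing $\EE(l)\subset E$, and the determinism from Part~(2) then forces the algorithm to trace $l$ as a single peeled loop. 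The step I expect to be hardest is exactly this last point: verifying that a simple surrounding dual loop survives unbroken through the preceding peelings, so that an outmost level-$i$ loop enters $E$ as a whole rather than in fragments. A merely weakly simple surrounding dual loop could touch itself at a vertex where peeled primal loops already meet, severing the dual connectivity unevenly across $\EE(l)$; the simplicity hypothesis in the definition of outmost is precisely what prevents this, and turning it into a careful bookkeeping of dual faces across iterations is the main technical obstacle.
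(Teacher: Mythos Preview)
Your proposal is correct and follows essentially the same approach as the paper: Part~(1) from the construction, Part~(2) from the turning rule in Case~\eqref{i.case:n_j=4}, and Part~(3) by induction on the level, establishing $L^\peel_i\subset L_i$ first and $L^\most_i\subset L^\peel_i$ second. You supply considerably more detail than the paper's proof, which handles each part in one or two sentences.

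Two minor corrections. First, the assertion that ``two distinct dual edges sharing an endpoint in $\delta(\Z^2)^*$ are necessarily perpendicular'' is false in general; what is true, and sufficient here, is that among the four duals of primal edges incident to a fixed vertex $x_j$, any two sharing a dual endpoint are perpendicular.

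Second, you slightly misplace the role of the simplicity of the surrounding dual loop $l'$ in your final paragraph. The loop $l'$ cannot be ``severed'' by earlier peelings: its edges are $\kappa^*$-open and peeling only opens further dual edges, so $l'$ persists in every $(\kappa^j)^*$ regardless of whether it is simple or merely weakly simple. Where simplicity actually matters is in guaranteeing that, once the edges of $l$ land in $E$ at step $i$, the deterministic rule in Case~\eqref{i.case:n_j=4} traces $l$ as a single loop rather than fragmenting it: at each four-valent vertex of $l$ the rule turns toward the exterior dual component, and that exterior is globally consistent around $l$ precisely because $l'$ is simple. This is what the paper's terse ``we peel the loops in $L^\most_i$ as in Step~\eqref{i.case:n_j=4}'' is encoding. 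The separate fact that no edge of $l$ enters $E$ before step $i$ comes from $l$ being level-$i$ (hence not level-$j$ for any $j<i$) together with the induction hypothesis $L^\peel_j\subset L_j$, not from any survival property of $l'$.
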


\begin{proof}
Part~\eqref{i.L^peel_edges} follows directly from the construction of $L^\peel_i$ and $\kappa^i$, since the loops constructed are from edges in $\kappa$.

By the construction, the loops in $L^\peel_i$ are weakly-disjoint for each $i$ and the loops in different $L^\peel_i$'s do not share edges. Step~\eqref{i.case:n_j=4} ensures that the loops in $L^\peel_i$ are weakly simple, verifying Part~\eqref{i.L^peel_non-sefl-cro}.

We now show the second inclusion in Part~\eqref{i.L^peel_supset_L^most}. The definition of $E$ gives $L^\peel_1=L_1$. Assume the inclusion up to index $i$. Notice that every $l\in L^\peel_{i+1}$ is connected to $\partial K^*$ by open edges in $\kappa^*_{i}$. Therefore, $l$ is connected by edges in $\kappa^*$ to some loop in $L^\peel_{i}$ in $\kappa$. By the induction assumption, we conclude that $l$ is of level-$(i+1)$ and thus $L^\peel_{i+1}\subset L_{i+1}$.

Then, we turn to the first inclusion in Part~\eqref{i.L^peel_supset_L^most}. By the definition of $K^*$, the edges of all the loops in $L^\most_1$ are in $E$. Then, we peel the loops in $L^\most_1$ as in Step~\eqref{i.case:n_j=4} of the algorithm. Assume the inclusion up to index $i$ for some $i\geq 1$. After removing edges in $\kappa$ to get $\kappa^i$, we see that the edges on the loops in $L^\most_{i+1}$ are not in $E$. As before, we peel them into $L^\peel_{i+1}$.
Therefore, the verification of Part~\eqref{i.L^peel_supset_L^most} is complete.
\end{proof}

\subsubsection{Concatenation of loops}\label{s.loop_construct_3}

First, we describe a concatenation of two weakly disjoint weakly simple loops, $l=(x_0,\dots,x_n)$ and $l'=(x'_0,\dots,x'_{n'})$, satisfying the following condition:
\begin{align}\label{e.concat_cond}
\begin{cases}
    \text{there is } z=x_k= x'_{k'} \in \VV(l)\cap \VV(l')\text{ such that} 
    \\
    (x_{k-1},z,x'_{k'-1})\text{ and } (x_{k+1},z,x'_{k'+1}) \text{ are on straight lines.}
\end{cases}
\end{align}
We describe a trajectory that concatenates $l$ and $l'$ into one loop: 
\begin{enumerate}
    \item start from $x_0$ and move along $l$ in order until hitting a common vertex $x_k=x'_{k'}\in \VV(l)\cap \VV(l')$;
    \item jump to $x'_{k'+1}$ and then move along $l'$ in order until reaching $x'_{k'}=x_k$;
    \item jump back to $x_{k+1}$ and then move along $l$ in order until reaching $x_0$.
\end{enumerate}

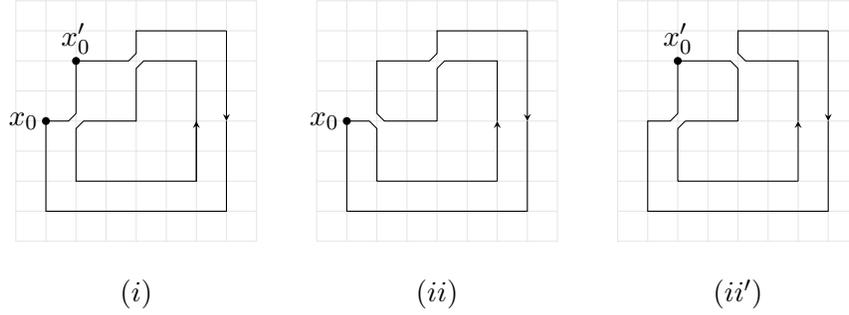
\begin{figure}
    \centering
    \begin{tikzpicture}
\draw[black!10, step=4mm] (0.4,0.4) grid (3.6,3.6)
;
\draw[] (1.2,1.9) -- (1.2,1.2) -| (2.8,2.8) -- (2.1,2.8) (1.9,2.8) -| (1.2,2.1)
(1.1,2) -- (1.2,2.1)
(1.2,1.9) -- (1.3,2)
(2,2.9) |- (3.2,3.2) 

(3.2,2) |- (0.8,0.8) |- (1.1,2)
(1.3,2) -| (2,2.7)
(1.9,2.8) -- (2,2.9)
(2,2.7) -- (2.1,2.8)
(0.5,2) node{$x_0$}
(1.2,3.1) node{$x_0'$}
(2,-0.3) node{$(i)$}
;
\fill[](0.8,2) circle (1.5pt)
(1.2,2.8) circle (1.5pt)
;

\draw[black!10, step=4mm,xshift=40mm] (0.4,0.4) grid (3.6,3.6);

\fill[xshift=40mm](0.8,2) circle (1.5pt);
\draw[xshift=40mm]
(0.5,2) node{$x_0$}
(1.2,1.2) -- (2.8,1.2)
(2.8,2) -- (2.8,2.8) -- (2.1,2.8)
(2.1,2.8) -- (2,2.7)
(1.9,2.8) -- (2,2.9)
(1.9,2.8) -| (1.2,2.1)

(1.2,1.9) -- (1.2,1.2)

(2,2) -- (2,2.7) 

(2,2.9) |- (3.2,3.2) 

(3.2,2) |- (0.8,0.8) |- (1.1,2)
(1.2,1.9) -- (1.1,2)
(1.3,2) -- (1.2,2.1)
(1.3,2) -- (2,2)
(2,-0.3) node{$(ii)$}
;
\draw[black!10, step=4mm,xshift=80mm] (0.4,0.4) grid (3.6,3.6)
;
\draw[-stealth,very thin](2.8,1.2) -- (2.8,2);
\draw[-stealth,very thin](3.2,3.2) -- (3.2,2);
\draw[-stealth,very thin, xshift=40mm](2.8,1.2) -- (2.8,2);
\draw[-stealth,very thin,xshift=40mm](3.2,3.2) -- (3.2,2);
\draw[xshift=80mm]
(1.2,3.1) node{$x_0'$}
(1.2,1.2) -- (2.8,1.2)
(2.8,2) -- (2.8,2.8) -- (2.1,2.8)
(2,2.9) -- (2.1,2.8)
(1.9,2.8) -- (2,2.7)
(1.9,2.8) -| (1.2,2.1)

(1.2,1.9) -- (1.2,1.2)

(2,2) -- (2,2.7) 
(1.1,2) -- (1.2,2.1)
(1.2,1.9) -- (1.3,2)
(2,2.9) |- (3.2,3.2) 

(3.2,2) |- (0.8,0.8) |- (1.1,2)

(1.3,2) -- (2,2)
(2,-0.3) node{$(ii')$}
;
\fill[xshift=80mm](1.2,2.8) circle (1.5pt)
;
\draw[-stealth,very thin,xshift=80mm](2.8,1.2) -- (2.8,2);
\draw[-stealth,very thin,xshift=80mm](3.2,3.2) -- (3.2,2);
\end{tikzpicture}
    \caption{Different starting vertices $x_0$ and $x'_0$ in $(i)$ give two different concatenations: $(i)\to(ii)$ and $(i)\to(ii')$.}
    \label{fig:picdifconc}
\end{figure}

We denote by $l\oplus l'$ a \textbf{concatenation} of $l$ and $l'$, which is not unique since the procedure depends on the starting vertex $x_0$ (see the second row in Figure~\ref{fig:picdifconc}). 
Notice that $l\oplus l'$ traverses all vertices in $l$ and $l'$ and preserves the order on each edge in $l$ and $l'$, which implies that
\begin{align}\label{e.l'insidel=>orientation}
    \text{$l'$ is inside $l$}\quad\Longrightarrow\quad\text{$l\oplus l'$ has the same orientation as $l$}.
\end{align}
Clearly, that $l\oplus l'$ is weakly simple.

We use this procedure to concatenate loops in $\cup_{i=1}^N L^\peel_i$ (see Figure~\ref{fig:peelconc}). For every $l^i\in L_i^\most\subset L_i^\peel$ (see Lemma~\ref{l.L^peel}~\eqref{i.L^peel_supset_L^most}), we inductively construct the following loops:
\begin{enumerate}

    \item We start by setting $l^{i,i}=l^i$.
    \item Assuming that we have constructed $l^{i,k}$ for some $k\in\{i,i+1,\dots,N-1\}$, we let $\{l'_j\}_{1\leq j\leq m_{k+1}}$, for some $m_{k+1}\in\N$, be loops in $L_{k+1}^\peel$ satisfying $\VV(l'_j)\cap \VV(l^{i,k})\neq \emptyset$.
    \item \label{i.l=(...(l+} We set $l^{i,k+1} =\Big(\cdots\big((l^{i,k}\oplus l'_1)\oplus l'_2\big)\cdots\oplus l'_{m_{k+1}}\Big)$.
    \item Repeat the second step with $k+1$ substituted for $k$ until we get $l^{i,N}$.
\end{enumerate}
Let us comment on Step~\eqref{i.l=(...(l+}. Step~\eqref{i.l=(...(l+} requires the weak disjointness in Lemma~\ref{l.L^peel}~\eqref{i.L^peel_non-sefl-cro} and the choice~\eqref{e.orientation} for loops in $L^\peel_i$, which guarantees condition~\eqref{e.concat_cond}. By the weak disjointness, $l'_{j+1}$ is inside the big loop concatenated $l'_j$. This along with~\eqref{e.l'insidel=>orientation} implies that $l^{i,k}$ has the same orientation as $l^{i,k+1}$. Iteratively, $l^{i}$ has the same orientation as $l^{i,N}$. Since $\oplus$ is not associative, we must concatenate by order.

For each $i$, we define a new collection
\begin{align*}
    L_i^\conc = \Ll\{l^{i,N}:l^i\in L_i^\most\Rr\},
\end{align*}
of which we record some properties.

\begin{lemma}\label{l.L^conc}
The following holds:
\begin{enumerate}
    \item \label{i.L^conc_edge} the collection $\cup_{i=1}^NL^\conc_i$ satisfies the property in~\eqref{e.kappa_e=1<=>...};
    \item \label{i.L^conc_inside} $l^{i,N}$ is inside $l^i$ for every $l^i\in L_i^\most$;
    \item \label{i.L^conc_diam} 
    $\diam(l^{i,N}) = \diam(l^i)$ for every $l^i\in L_i^\most$;
    \item \label{i.L^conc_disjoint_non-self-cro} $\cup_{i=1}^NL^\conc_i$ consists of disjoint weakly simple loops;
    \item \label{i.L^conc_biject} the map $l^i\mapsto l^{i,N}$ is a bijection from $L_i^\most$ to $L_i^\conc$.
\end{enumerate}
\end{lemma}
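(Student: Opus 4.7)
The backbone is induction on the concatenation step $k$ in the construction of $l^{i,k}$, together with two elementary properties of $\oplus$ that follow by inspection of its algorithmic definition: (i) $\EE(l\oplus l') = \EE(l)\cup \EE(l')$ and $\VV(l\oplus l')=\VV(l)\cup\VV(l')$; (ii) if $l,l'$ are weakly disjoint weakly simple loops satisfying~\eqref{e.concat_cond}, then $l\oplus l'$ is again weakly simple. These two facts drive most of the argument.

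For Part~\eqref{i.L^conc_inside}, I proceed by induction on $k$. The base $l^{i,i}=l^i$ is trivial. At step $k\to k+1$, each absorbed $l'_j\in L^\peel_{k+1}$ has level $k+1>i$ and is therefore strictly inside some level-$i$ loop; since $l^{i,k}$ lies inside $l^i$ by the inductive hypothesis and $l'_j$ shares a vertex with $l^{i,k}$, that level-$i$ parent must be $l^i$ itself---otherwise two distinct outermost level-$i$ loops would be linked through a vertex-sharing chain, contradicting the dual-surround characterization of $L^\most_i$. By property~(i), $l^{i,k+1}$ equals $l^{i,k}\cup l'_j$ as a subset of $\R^2$, both terms lying inside $l^i$. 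Part~\eqref{i.L^conc_diam} is then immediate: property~(i) gives $l^i\subset l^{i,N}$, hence $\diam(l^{i,N})\ge\diam(l^i)$; the reverse inequality uses Part~\eqref{i.L^conc_inside} together with the standard planar fact that for a non-self-crossing loop $l$ the diameter of the closure of its inside equals $\diam(l)$ (the extremal pair of points in a Jordan domain can always be pushed to the boundary without decreasing their distance, extended to non-self-crossing loops by approximation).

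For Part~\eqref{i.L^conc_edge}, applying property~(i) iteratively yields $\EE(l^{i,N})=\EE(l^i)\cup\bigcup_{k>i}\bigcup_j\EE(l'_j)$, so in view of Lemma~\ref{l.L^peel}~\eqref{i.L^peel_edges} it suffices to prove that every loop in $\cup_{k\ge1}L^\peel_k$ is absorbed (or started) in some $l^{i,N}$. I would show this by induction on level: every $l\in L^\most_j$ is a starting loop, while $l\in L^\peel_j\setminus L^\most_j$ fails the dual-surround condition at level $j$ and must share a vertex with some level-$(j-1)$ loop $l''\in L^\peel_{j-1}$; by the inductive hypothesis $l''$ is already absorbed into some $l^{i,N}$ with $i\le j-1$, and then $l$ is absorbed at step $j-1\to j$. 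For Part~\eqref{i.L^conc_disjoint_non-self-cro}, weak simplicity of each $l^{i,N}$ is immediate by iterating property~(ii); disjointness of $l^{i,N}$ and $\tilde l^{i',N}$ coming from distinct seeds reduces to showing that the two absorption clusters share no vertex, which follows from the dual-surround condition on $L^\most$ together with the greedy rule that only loops sharing vertices with the current concatenation are absorbed. Part~\eqref{i.L^conc_biject} is then immediate: surjectivity holds by definition of $L^\conc_i$, and injectivity follows because $l^i$ is recoverable from $l^{i,N}$ as the outer boundary of its concatenated cluster, so distinct seeds produce distinct outputs.

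The main obstacle is the disjointness claim in Part~\eqref{i.L^conc_disjoint_non-self-cro}. Since loops in $L^\peel$ are only \emph{weakly} disjoint, a direct edge-counting argument cannot yield disjointness of the concatenated loops; one must carefully trace the vertex-sharing chains created by the iterative absorption and invoke the dual-surround characterization of $L^\most$ to certify that the clusters seeded by different outermost loops remain mutually vertex-disconnected.
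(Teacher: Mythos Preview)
Your proposal is broadly correct and uses the same ingredients as the paper---properties~(i) and~(ii) of $\oplus$, induction on $k$, and the dual-surround characterization of $L^\most$. The main difference is one of emphasis: you identify Part~\eqref{i.L^conc_disjoint_non-self-cro} as the chief obstacle requiring a careful trace of vertex-sharing chains, whereas the paper dispatches it in one line. The paper's observation is that each outmost seed $l^i$ is surrounded by a dual loop $\gamma^i\in L^*$; since the dual edges on $\gamma^i$ correspond to primal edges closed in $\kappa$, no open primal edge can cross $\gamma^i$, so every loop absorbed into $l^{i,N}$ (which must share a vertex with the growing cluster) lies on the same side of $\gamma^i$ as $l^i$. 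Thus $l^{i,N}$ is trapped inside $\gamma^i$, and since the $\gamma^i$ for distinct seeds are disjoint, the concatenated loops are automatically vertex-disjoint---no chain-tracing is needed once the $\gamma^i$ are recognized as hard barriers. This same barrier also gives Part~\eqref{i.L^conc_inside} directly, without the level-structure argument you sketch.

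Conversely, your treatment of Part~\eqref{i.L^conc_edge}---checking by induction on level that every loop in $\cup_k L^\peel_k$ is eventually absorbed into some $l^{i,N}$---is more careful than the paper, which simply asserts that concatenation neither adds nor deletes edges and leaves the coverage implicit. For Part~\eqref{i.L^conc_biject}, the paper's injectivity argument is slightly different from yours: rather than recovering $l^i$ as the outer boundary of $l^{i,N}$, it notes that distinct seeds $l^i$ are disjoint while both are contained in their respective $l^{i,N}$, which by the disjointness in Part~\eqref{i.L^conc_disjoint_non-self-cro} forces the outputs to differ.
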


We remark that the bijectivity in Part~\eqref{i.L^conc_biject} is used to show a loop decomposition of the critical Ising interface converges to $\cle(3)$ on the basis of the results in \cite{benoist2019scaling}, where the convergence is proven for the leftmost Ising loops.
\begin{proof}
In the concatenation procedure, no edge is added or deleted from the loops in $\cup_{i=1}^N L^\peel_i$. This along with Lemma~\ref{l.L^peel}~\eqref{i.L^peel_edges} yields Part~\eqref{i.L^conc_edge}.

The loops concatenated into $l^{i,i}$ are those sharing vertices with $l^i$. Since $l^i$ is outmost, these loops must be inside $l^i$. By the definition of outmost loops, $l^i$ is surrounded by a loop $\gamma^i\in L^*$. Iteratively, we see that all the ensuing concatenations happen inside $l^i$, which proves Part~\eqref{i.L^conc_inside}. Part~\eqref{i.L^conc_diam} also follows in a straightforward way.

Since $\gamma^i$ are disjoint for different $l^i$ and $l^{i,N}$ are inside $\gamma^i$, we have that $l^{i,N}$ are disjoint. The facts that the loops in $L^\peel_{k+1}$ are weakly simple (Lemma~\ref{l.L^peel}~\eqref{i.L^peel_non-sefl-cro}) and $\oplus$ preserves this property prove Part~\eqref{i.L^conc_disjoint_non-self-cro}.

The definition of $L^\conc_i$ ensures that the map $l^i\mapsto l^{i,N}$ is surjective.
Part~\eqref{i.L^conc_edge} and the disjointness of $l^i$  imply the injectivity, proving Part~\eqref{i.L^conc_biject}.
\end{proof}

Recall from Lemma~\ref{l.L^peel}~\eqref{i.L^peel_edges} that the loops in $L^\peel_i$ are level-$i$ and that levels are defined relative to the loops in $L^\mathrm{all}$.
After concatenating, the loops in $L^\conc_i$ are of level-$i$ with respect to the loops in $L^\mathrm{all}$. However, the level with respect to the loops in $\cup_{i=1}^NL^\conc_i$ is less than or equal to $i$.

\subsubsection{Conclusion of constructions}

We define
\begin{align}\label{e.kappa^loop}
    \kappa^\lop = \cup_{n=1}^N L_i^\conc.
\end{align} 

\begin{lemma}\label{l.kappa^lop_loop_decomposition}
If $\kappa$ is a percolation configuration on $K$ satisfying $\partial_K \kappa=\emptyset$, then the collection $\kappa^\lop$ defined in~\eqref{e.kappa^loop} is a loop decomposition of $\kappa$ on $K$.
\end{lemma}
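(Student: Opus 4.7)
The statement is essentially a repackaging of the properties already established for the constructed collection $\cup_{i=1}^N L_i^\conc$, so my plan is to unwind the definition of loop decomposition and match each requirement against the corresponding part of Lemma~\ref{l.L^conc}.

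Recall that a loop decomposition of $\kappa$ on $K$ is, by definition, a collection of disjoint weakly simple loops satisfying the edge-matching property~\eqref{e.kappa_e=1<=>...}. Therefore the proof reduces to checking two items for $\kappa^\lop = \cup_{i=1}^N L_i^\conc$:
\begin{enumerate}
    \item the loops in $\kappa^\lop$ are pairwise disjoint and weakly simple;
    \item for every $e \in \EE(K)$, one has $\kappa_e = 1$ if and only if $e \in \EE(l)$ for some $l \in \kappa^\lop$.
\end{enumerate}

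For item (1), I invoke Lemma~\ref{l.L^conc}~\eqref{i.L^conc_disjoint_non-self-cro}, which states exactly that $\cup_{i=1}^N L^\conc_i$ consists of disjoint weakly simple loops. For item (2), I invoke Lemma~\ref{l.L^conc}~\eqref{i.L^conc_edge}, which states that $\cup_{i=1}^N L^\conc_i$ satisfies the property in~\eqref{e.kappa_e=1<=>...}. Combining these two parts yields the conclusion.

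There is essentially no obstacle here: the substantive work—the fact that the peeling procedure terminates after finitely many steps (so that $N$ in~\eqref{e.N=inf...} is finite), that the peeled loops are weakly simple and weakly disjoint (Lemma~\ref{l.L^peel}), and that concatenation along common vertices produces disjoint weakly simple loops whose edge set still equals $\{e : \kappa_e = 1\}$ (Lemma~\ref{l.L^conc})—has already been carried out. The only conceptual point worth emphasizing, to justify that the construction applies in the first place, is that the hypothesis $\partial_K \kappa = \emptyset$ propagates through the induction: at each step the number of open edges removed at every vertex is even, so $\partial_K \kappa^i = \emptyset$ throughout, which is what licensed both the peeling algorithm and the concatenation step in the first place. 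Once this is noted, the lemma follows immediately from the two cited parts of Lemma~\ref{l.L^conc}.
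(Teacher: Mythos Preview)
Your proposal is correct and matches the paper's own proof exactly: the paper's argument is the one-line observation that the result follows from Lemma~\ref{l.L^conc}~\eqref{i.L^conc_edge} and~\eqref{i.L^conc_disjoint_non-self-cro}. Your additional remarks about the propagation of the sourceless condition are accurate but already absorbed into the earlier construction, so they are not needed here.
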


\begin{proof}
This is a consequence of Lemma~\ref{l.L^conc}~\eqref{i.L^conc_edge} and~\eqref{i.L^conc_disjoint_non-self-cro}.
\end{proof}

We emphasize that a loop decomposition of $\kappa$ is in general not unique because self-touching loops can be traversed in different ways as demonstrated in Figure~\ref{fig:picdifconc}.

\begin{lemma}\label{l.lop_unique}
Up to the identification by $=_{\R^2}$, $\kappa^\lop$ is unique.
\end{lemma}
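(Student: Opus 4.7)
The plan is to show that the collection $\kappa^\lop$, viewed as a family of edge sets in $\R^2$, is uniquely determined by $\kappa$ alone, independent of the choices made in the peeling (indexing of $E$, selection of the dual certifying path $\gamma$ at each degree-$4$ vertex) and in the concatenation (choice of starting vertex in each $\oplus$). Figure~\ref{fig:picdifconc} already illustrates the nature of the remaining freedom: differing concatenation orderings only toggle the pairing of the four incident edges at a degree-$4$ self-touching vertex of a concatenated loop, which is precisely the distinction erased by the identification $=_{\R^2}$.

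First I would establish, by induction on $i$, that the level collections $\tilde L_i$, $L_i$, and the outmost sub-collections $L^\most_i$ are canonically determined by $\kappa$, since they are defined by purely topological conditions (nesting and dual connectivity) from the fixed reference $\tilde L_0=\{\partial K\}$. Moreover, at each peeled level, the total edge set $\EE(L^\peel_i)$ equals $E=\{e\in \EE(\kappa^{i-1}):e^*\leftrightarrow \partial K^*\text{ in }(\kappa^{i-1})^*\}$, which is canonical because $\kappa^{i-1}=\kappa\setminus\bigcup_{j<i}\EE(L^\peel_j)$ is canonical by the induction hypothesis. Although the partition of $E$ into individual loops within $L^\peel_i$ may depend on the peeling choices, the edge set $E$ itself does not; consequently $\kappa^i$ is canonical, closing the induction.

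The crux is then the intrinsic characterization, for each $l^i\in L^\most_i$,
\[
\EE(l^{i,N})=\EE(l^i)\cup \bigl\{e\in \EE(\kappa):\text{midpoint of }e\text{ lies in }R_i\bigr\},
\]
where $R_i$ denotes the open planar region strictly inside $l^i$ and strictly outside every outmost loop $l^j\in L^\most_j$, $j>i$, contained in $l^i$. The inclusion $\subseteq$ combines Lemma~\ref{l.L^conc}~\eqref{i.L^conc_inside} (which confines $l^{i,N}$ inside $l^i$) with the disjointness from~\eqref{i.L^conc_disjoint_non-self-cro}: an edge of $l^{i,N}$ with midpoint inside a deeper outmost loop $l^j$ inside $l^i$ would collide with $l^{j,N}\subset l^j$, contradicting vertex-disjointness of distinct $l^{\cdot,N}$'s. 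The reverse direction $\supseteq$ uses the partition property~\eqref{i.L^conc_edge} to allocate each $e\in \EE(\kappa)\cap R_i$ to some $l^{k,N}$, and then rules out $k\ne i$ by a planar case analysis: since the interior of $l^k$ must contain a point lying inside $l^i$, the outmost loop $l^k$ must itself lie inside $l^i$ (distinct outmost loops cannot properly cross, owing to their edge-disjointness and the separating dual loops furnished by outmostness), which together with $k>i$ would force the midpoint of $e$ to lie inside or on $l^k$, contradicting the defining condition of $R_i$ unless $k=i$.

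Once this characterization is in place, both $l^i$ and $R_i$ are canonical, so $\EE(l^{i,N})$ is canonical for every outmost loop $l^i$; the bijection of Lemma~\ref{l.L^conc}~\eqref{i.L^conc_biject} then promotes this to the canonicity of the whole collection $\kappa^\lop=\bigcup_i L^\conc_i$ up to $=_{\R^2}$. The main obstacle I expect is the planar case analysis in the $\supseteq$ direction, specifically verifying that two outmost loops at different levels can never be configured so that the interior of the deeper one overlaps $R_i$ while the loop itself escapes $l^i$; I expect this to follow cleanly from the dual loop surrounding an outmost loop acting as a topological barrier, but it needs to be spelled out carefully.
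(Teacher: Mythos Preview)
Your approach is workable in principle but takes a much harder route than the paper. You try to show that the specific peeling/concatenation construction is canonical by tracing through the algorithm and giving an intrinsic geometric description of each $\EE(l^{i,N})$ via the region $R_i$. The paper instead proves something stronger and shorter: \emph{any two loop decompositions of $\kappa$ coincide up to $=_{\R^2}$}. Since Lemma~\ref{l.kappa^lop_loop_decomposition} already certifies that $\kappa^\lop$ is a loop decomposition regardless of the choices made, uniqueness of $\kappa^\lop$ follows immediately.

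The paper's argument is essentially the observation that the loops of any loop decomposition are the connected components of $\kappa$ viewed as a subset of $\R^2$. Concretely: take $l\in L$ and look at the set $L'(l)=\{l'\in L':\EE(l')\cap\EE(l)\neq\emptyset\}$. Every edge of $l$ lies on some loop of $L'$ by~\eqref{e.kappa_e=1<=>...}, so $l\subset\bigcup_{l'\in L'(l)}l'$. But the loops in $L'$ are pairwise disjoint as subsets of $\R^2$ (this is the definition of ``disjoint'' in a loop decomposition), and $l$ is connected, so $L'(l)$ is a singleton $\{l'\}$ with $l\subset l'$. The reverse inclusion follows by symmetry, giving $l=_{\R^2}l'$. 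Two lines, no reference to peeling, concatenation, levels, or outmost loops.

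Your route, by contrast, requires you to verify the intrinsic characterization $\EE(l^{i,N})=\EE(l^i)\cup\{e:\text{midpoint in }R_i\}$, and you yourself flag the $\supseteq$ direction as the obstacle. That case analysis is real: you would need to rule out configurations where a peeled loop concatenated into $l^{i,N}$ dips inside a deeper outmost $l^j$, and conversely that every edge in $R_i$ is reached by the concatenation chain started at $l^i$. Both are provable, but they amount to re-deriving the connected-component structure through the construction rather than reading it off directly from the definition of loop decomposition. The paper's approach buys you the result with none of that work and also gives the more useful statement (used later in Lemma~\ref{l.const_exp}) that \emph{every} loop decomposition agrees with $\kappa^\lop$ up to $=_{\R^2}$.
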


\begin{proof}
Let $L$ and $L'$ be two loop decompositions of $\kappa$. Let $l \in L$ and set $L'(l) = \{l'\in L':\EE(l')\cap \EE(l) \neq \emptyset\}$. By the property~\eqref{e.kappa_e=1<=>...}, for every $e\in \EE(l)$, we have $ \kappa_e =1$ and thus $e\in \EE(l')$ for some $l'\in L'$. Hence, $L'(l)$ is not empty and we further deduce that $l \subset \cup_{l'\in L'(l)}l'$. Since the loops in $L'$ are disjoint, the loops in $L'(l)$ are disjoint when viewed as subsets of $\R^2$. As $l$ is connected, $L'(l)$ must be a singleton and we denote the element by $l'$, satisfying $l\subset l'$. The other direction follows similarly using disjointedness and connectedness and thus $l=_{\R^2} l'$. Hence, we conclude that $L\subset_{\R^2} L'$ and the other inclusion can be deduced in the same way.
\end{proof}

\begin{remark}\label{r.extend_varsigma}
Let $R$ be a subgraph of $\D_\delta$ and let $\varsigma\in\{0,1\}^{\EE(R)}$. Suppose that $\partial_R \varsigma =\emptyset$. 
Let $\bar\varsigma$ be the trivial extension of $\varsigma$ (see~\eqref{e.trivial_ext}) and let $\bar\varsigma_{\D_\delta}$ be the restriction of $\bar\varsigma$ to $\D_\delta$ (see~\eqref{e.kappa_K}).
The definition of sources in~\eqref{e.source} implies that $\partial_{\D_\delta} \bar\varsigma_{\D_\delta}=\emptyset$. Denote by $\varsigma^\lop$ a loop decomposition of $\bar\varsigma_{\D_\delta}$ given by Lemma~\ref{l.kappa^lop_loop_decomposition}.  We view $\varsigma^\lop$ as the loop decomposition of $\varsigma$. Clearly, the loops in $\varsigma^\lop$ are in $R$.
\end{remark}

\begin{remark}\label{r.eta^lop}
For each $\delta$, under $\P^\emptyset_{\D_\delta}$, every realization $\eta_\delta$ satisfies $\partial_{\D_\delta} \eta_\delta=\emptyset$, which is clear from the definition of $\P^\emptyset_{\D_\delta}$ in~\eqref{e.P^emptyset_G}. By Lemma~\ref{l.kappa^lop_loop_decomposition}, every realization $\eta_\delta$ admits a loop decomposition. In Sections~\ref{s.compare_cross} and~\ref{s.id_limits}, we denote by $\eta^\lop_\delta$ some loop decomposition of $\eta_\delta$. While in the application to random current in Section~\ref{s.app_random_current}, we denote by $\eta^\lop_\delta$ the loop decomposition constructed in~\eqref{e.kappa^loop}, because we need the bijection in Lemma~\ref{l.L^conc}~\eqref{i.L^conc_biject} and convergence results in \cite{benoist2019scaling} to show the convergence of $\eta^\lop_\delta$ to $\cle(3)$.
\end{remark}

\section{Comparison of crossing probabilities}\label{s.compare_cross}

\subsection{A digression on exploration processes}
Instead of giving an algorithm, we adopt a static approach by describing successful outcomes when part of the configuration is explored. 
We work with a slightly more general setting as in Section~\ref{s.kappa_prop}.
Let $G$ be a finite graph and let $\kappa\in\{0,1\}^{\EE(G)}$ be a percolation configuration on $G$. Recall the notation~\eqref{e.kappa_K} for restriction $\kappa_R$ of $\kappa$ to a subgraph $R$.

Let $\mathcal{E}$ be a collection of subgraphs of $G$. We interpret each $R\in\mathcal{E}$ as the subgraph on which we explore (or reveal) the configuration $\kappa$.

For each $R\in\mathcal{E}$, we associate a set $S_R\subset \{0,1\}^{\EE(R)}$. We interpret $S_R$ as the collection of the states that are admissible after revealing $\kappa_R$. Indeed, in an algorithmic description of the exploration process, reaching any state in $S_R$ after exploring $R$ terminates the algorithm.

Writing $S = (S_R)_{R\in\mathcal{E}}$, we call $(\mathcal{E}, S)$ an \textbf{exploration process}. We associate with $(\mathcal{E}, S)$ the following event:
\begin{align}\label{e.B}
    B = \bigcup_{R\in\mathcal{E}}\{\kappa:\kappa_R \in S_R\}.
\end{align}
Let $\mathbf{P}$ be any probability measure on $\{0,1\}^{\EE(G)}$.
We call $(\mathcal{E},S)$ \textbf{efficient under $\mathbf{P}$} if 
\begin{align}\label{e.efficient}
    R\neq R' \quad \Longrightarrow\quad \mathbf{P}\Ll(\{\kappa:\kappa_R \in S_R\}\cap \{\kappa:\kappa_{R'} \in S_{R'}\}\Rr) = 0.
\end{align}
This means that explorations on different sets of edges (i.e.\ $R\neq R'$) will not double count the same configuration.
It is easy to see that $(\mathcal{E},S)$ is efficient if and only if $B$ is a disjoint union.

\begin{lemma}\label{l.eff_exp}
If $(\mathcal{E},S)$ is an exploration process efficient under $\mathbf{P}$and event $B$ is defined as in~\eqref{e.B}, then for every event~$A$,
\begin{align*}
    \mathbf{P}(A|B) = \frac{\sum_{R\in\mathcal{E}}\sum_{\varsigma\in S_R}\mathbf{P}(A|\kappa_R =\varsigma)\mathbf{P}(\kappa_R=\varsigma)}{\sum_{R\in\mathcal{E}}\sum_{\varsigma\in S_R}\mathbf{P}(\kappa_R=\varsigma)}.
\end{align*}
\end{lemma}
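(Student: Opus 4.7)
The plan is to unpack the definition of conditional probability and use efficiency to decompose the event $B$ into a disjoint union of elementary events of the form $\{\kappa_R = \varsigma\}$.

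First I would write $\mathbf{P}(A\mid B) = \mathbf{P}(A\cap B)/\mathbf{P}(B)$ and handle the denominator. By the definition of $B$ in~\eqref{e.B}, $B = \bigcup_{R\in\mathcal{E}}\bigcup_{\varsigma \in S_R} \{\kappa_R = \varsigma\}$. Efficiency~\eqref{e.efficient} says that for $R\neq R'$ the events $\{\kappa_R\in S_R\}$ and $\{\kappa_{R'}\in S_{R'}\}$ have null intersection under $\mathbf{P}$, while for a fixed $R$ the events $\{\kappa_R = \varsigma\}$ with $\varsigma \in S_R$ are automatically pairwise disjoint. Hence
\begin{align*}
    \mathbf{P}(B) = \sum_{R\in\mathcal{E}}\sum_{\varsigma\in S_R} \mathbf{P}(\kappa_R = \varsigma),
\end{align*}
which is the denominator on the right-hand side.

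Next I would handle the numerator by intersecting $A$ with the same disjoint decomposition:
\begin{align*}
    \mathbf{P}(A\cap B) = \sum_{R\in\mathcal{E}}\sum_{\varsigma\in S_R} \mathbf{P}\bigl(A\cap\{\kappa_R = \varsigma\}\bigr) = \sum_{R\in\mathcal{E}}\sum_{\varsigma\in S_R} \mathbf{P}(A \mid \kappa_R = \varsigma)\,\mathbf{P}(\kappa_R=\varsigma),
\end{align*}
where the second equality uses the definition of conditional probability (adopting the usual convention that terms with $\mathbf{P}(\kappa_R=\varsigma) = 0$ contribute zero). Dividing by $\mathbf{P}(B)$ gives the claimed identity.

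There is essentially no obstacle here; the only thing to be careful about is that efficiency is used as a \emph{measure-theoretic} disjointness (intersections of null measure), not a set-theoretic one, so the sums above are equalities of probabilities rather than of sets. Also one should implicitly assume $\mathbf{P}(B) > 0$ for $\mathbf{P}(A\mid B)$ to make sense, and handle the vacuous case $\mathbf{P}(\kappa_R=\varsigma) = 0$ by the standard convention.
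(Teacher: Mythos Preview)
Your proof is correct and follows essentially the same approach as the paper: both use efficiency to write $\mathbf{P}(A\cap B)$ as the double sum $\sum_{R}\sum_{\varsigma}\mathbf{P}(A\cap\{\kappa_R=\varsigma\})$ and then obtain the denominator by taking $A$ to be the whole space. The only cosmetic difference is that the paper computes the numerator first and deduces the denominator as a special case, whereas you handle them separately.
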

\begin{proof}
We have
\begin{align*}
    \sum_{R\in\mathcal{E}}\sum_{\varsigma\in S_R}\mathbf{P}(A|\kappa_R =\varsigma)\mathbf{P}(\kappa_R=\varsigma)
    &= \sum_{R\in\mathcal{E}}\sum_{\varsigma\in S_R}\mathbf{P}(A\cap \{\kappa_R =\varsigma\}) 
    \\
    &= \sum_{R\in\mathcal{E}}\mathbf{P}(A\cap \{\kappa_R \in S_R\}) = \mathbf{P}(A\cap B)
\end{align*}
where the last equality follows from the efficiency of the exploration process.
Replacing $A$ by the entire probability space, we see that the denominator is equal to $\mathbf{P}(B)$.
\end{proof}

This lemma implies that a bound on $\mathbf{P}(A|\kappa_R=\varsigma)$, uniformly over $\varsigma$ and $R$, also holds for $\mathbf{P}(A|B)$.

\subsection{Exploration of loops from outside}\label{s.exp_loop_from_outside}

Fix any $\delta>0$ and let $\gamma\subset \D_\delta$ be a $\delta$-discrete non-self-crossing loop.
Let $[\gamma]$ be the $\delta$-discrete disc with boundary $\gamma$.
For any $\delta$-discrete loop $l$ in $\D_\delta$, we define $l^+$ to be the subgraph of $\D_\delta$ with
\begin{align}\label{e.l^+=}
    \VV\Ll(l^+\Rr) = \VV\Ll(l\Rr),\qquad \EE\Ll(l^+\Rr) = \Ll\{e \in \EE(\D_\delta):\: \text{$e$ has an endpoint in $\VV(l)$} \Rr\}.
\end{align}
In words, $l^+$ is obtained from the graph of $l$ by adding edges with endpoints in $\VV\Ll(l\Rr)$ (see Figure~\ref{fig:picl+}).
For a collection $L$ of disjoint weakly simple loops, we define the following union of graphs
\begin{align}\label{e.U(L)=}
    U(L) = \Ll( \D_\delta \setminus [\gamma]\Rr)\cup\Ll(\cup_{l\in L:\, l\cap(\D_\delta\setminus[\gamma])\neq \emptyset}\,l^+\Rr).
\end{align}
Set
\begin{align}\label{e.E}
    \mathcal{E} = \Ll\{R\subset \D_\delta: \text{$R=U(L)$ for some $L$} \Rr\}.
\end{align}
For each $R\in\mathcal{E}$, set
\begin{align}\label{e.mathcal(D)_R}
    \mathcal{D}_R = \Ll\{\varsigma\in\{0,1\}^{\EE(R)}:\: \partial_R \varsigma =\emptyset\Rr\}.
\end{align}
For each $\varsigma\in \mathcal{D}_R$, we denote by $\varsigma^\lop$ the loop decomposition of $\varsigma$ given by Remark~\ref{r.extend_varsigma}.

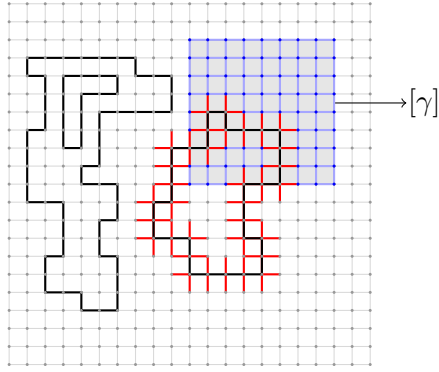
\begin{figure}
    \centering
    
\begin{tikzpicture}[scale=1.2]

\fill[black!10] (2,2) rectangle (3.6,3.6);
\draw[black!15, step=2mm] (0,0) grid (4,4);

\draw[black, thick]
(0.8,0.6) -| (1.2,1.2) -| (1,1.6) -| (1.2,2) -| (0.8,2.2) -| (1,2.8) -| (1.8,3.2) -| (1.4,3.4) -| (0.2,3.2) -| (0.4,2.6) -| (0.2,1.8) -| (0.6,1.2) -| (0.4,0.8) -| (0.8,0.6)
(0.6,2.4) -| (0.8,3) -| (1.2,3.2) -| (0.6,2.4);
 \draw[black, opacity=0.5, very thin] (2,2) rectangle (3.6,3.6);
 \draw[red, thick]
(1.4,1.6) -- (1.8,1.6) (1.4,1.4) -- (1.6,1.4) (1.4,1.8) -- (2,1.8) (1.6,2) -- (2,2) (1.6,2.2) -- (2,2.2) (1.6,2.4) -- (2,2.4) (2,2.6) -- (2.4,2.6) (2,2.8) -- (2.6,2.8) (2,1.4) -- (2.2,1.4) (1.8,1) -- (2,1) (1.8,1.2) -- (2.2,1.2)
(1.8,1.2) -- (1.8,1.6) (2,1.4) -- (2,1.6) (1.6,1.2) -- (1.6,2) (1.8, 1.6) -- (1.8, 2.6)
(2,2.2) -- (2,2.6) (2.2,2.2) -- (2.2,3) (2.4,2.4) -- (2.4,3) (2.6,2.4) -- (2.6,2.8) (2.8,2.4) -- (2.8,2.8) (3,1.8) -- (3,2.8) (2.8,1.8) -- (2.8,2.2) (2.6,0.8) -- (2.6,2.2) (2.4,0.8) -- (2.4,1.2) (2.2,0.8) -- (2.2,1.2) (2,0.8) -- (2,1) (2.8,0.8) -- (2.8,1.6)
(2.2,2.4) -- (2.4,2.4)
(2.8,1) -- (3,1) (2.6,1.2) -- (3,1.2) (2.4,1.4) -- (3,1.4) (2.4,1.6) -- (2.8,1.6) (2.4,1.8) -- (2.8,1.8) (2.4,2) -- (3.2,2) (2.8,2.2) -- (3.2,2.2) (2.8,2.4) -- (3.2,2.4) (3,2.6) -- (3.2,2.6);
\draw[black,thick] (1.6,1.4) |- (1.8,1.8) |- (2.2,2.4) |- (2.4,2.8) |- (3,2.6) |- (2.6,2) |- (2.8,1.4) |- (2,1) |- (1.6,1.4);
\draw[->] (3.6,2.9) -- (4.4,2.9);
\draw[] (4.6,2.9) node{$[\gamma]$};

\draw[blue!35, thick] (2,3.6) -- (3.6,3.6)
(2,3.4) -- (3.6,3.4)
(2,3.2) -- (3.6,3.2)
(2,3) -- (3.6,3)
(2.6,2.8) -- (3.6,2.8)
(3.2,2.6) -- (3.6,2.6) (3.2,2.4) -- (3.6,2.4) (3.2,2.2) -- (3.6,2.2) (3.2,2) -- (3.6,2)
(2,2) -- (2.4,2) (2,2.2) -- (2.8,2.2) (2.4,2.4) -- (2.8,2.4)
(2,2) -- (2,2.2) (2.2,2) -- (2.2,2.2) (2.4,2) -- (2.4,2.4) (2.6,2.2) -- (2.6,2.4) (2.8,2.2) -- (2.8,2.4)
(2,2.6) -- (2,3.6) (2.2,3) -- (2.2,3.6) (2.4,3) -- (2.4,3.6) (2.6,2.8) -- (2.6,3.6) (2.8,2.8) -- (2.8,3.6) (3,2.8) -- (3,3.6) (3.2,2) -- (3.2,3.6) (3.4,2) -- (3.4,3.6) (3.6,2) -- (3.6,3.6)
;
\foreach \i in {0,0.2,...,4}
 {\foreach \j in {0,0.2,...,4}
 \node[fill=black!40,circle,scale=0.2, inner sep=2pt]  at (\i,\j) {};
 }
\foreach \i in {2,2.2,...,3.6}
 {\foreach \j in {3,3.2,3.4,3.6}
 \node[fill=blue,circle,scale=0.2, inner sep=2pt]  at (\i,\j) {};
 }
 \foreach \i in {3.2,3.4,3.6}
 {\foreach \j in {2,2.2,...,2.8}
 \node[fill=blue,circle,scale=0.2, inner sep=2pt]  at (\i,\j) {};
 }
  \foreach \i in {2,2.2,...,2.8}
 {\node[fill=blue,circle,scale=0.2, inner sep=2pt]  at (\i,2.2) {};
 }
  \foreach \i in {2,2.2,2.4}
 {\node[fill=blue,circle,scale=0.2, inner sep=2pt]  at (\i,2) {};
 }
  \foreach \i in {2.6,2.8,3}
 {\node[fill=blue,circle,scale=0.2, inner sep=2pt]  at (\i,2.8) {};
 }
  \foreach \i in {2.4,2.6,2.8}
 {\node[fill=blue,circle,scale=0.2, inner sep=2pt]  at (\i,2.4) {};
 }
  \foreach \i in {2.6,2.8}
 {\node[fill=blue,circle,scale=0.2, inner sep=2pt]  at (2,\i) {};
 }
\end{tikzpicture}
    \caption{Edges colored in black are open in $\varsigma$, which form two loops in this example. The red edges in the figure are in $l^+\setminus l$. In the definition $U(L)$, we only need $l^+$ that intersects $D_\delta\setminus [\gamma]$. The shaded region is $[\gamma]$. The blue edges and blue vertices form the unexplored graph $D_\delta\setminus R$, which is the union of two $\delta$-discs in this example.}
    \label{fig:picl+}
\end{figure}

For each $R\in\mathcal{E}$, we define
\begin{align}\label{e.S}
    S_R =\Ll\{\varsigma \in \mathcal{D}_R : R =U\Ll(\varsigma^\lop\Rr) \Rr\}.
\end{align}
Formally speaking, an exploration on $R$ is successful when $R$ is exactly the union of the outside of $[\gamma]$, the loops intersecting $\partial_1 A$ in $\varsigma^\lop$ and their neighboring edges. The exploration process is described by the following algorithm:
\begin{enumerate}
    \item start by setting $R_0 = \D_\delta\setminus [\gamma]$;
    \item \label{i.2nd_step} for $i\in\N\cup\{0\}$, consider two cases:
    \begin{enumerate}
        \item if $\partial_{R_i}\eta_\delta =\emptyset $, then return $R=R_i$ and $\varsigma= (\eta_\delta)_{R_i}$;
        \item otherwise, let $R_{i+1}$ be a subgraph of $\D_\delta$ with $\VV(R_{i+1}) = \VV(R_i)\cup \partial_{R_i}\eta_\delta$ and
        \begin{align*}
            \EE(R_{i+1})=\Ll\{e\in \EE(\D_\delta): \:\text{$e$ has an endpoint in $\VV(R_{i+1})$}\Rr\},
        \end{align*}
        and repeat Step~\eqref{i.2nd_step} with $i+1$ substituted for $i$.
    \end{enumerate}
\end{enumerate}
This exploration process is based on finding sources $x$ on the revealed graph then further reveal $\eta_\delta$ on edges with endpoint $x$ until there is no source. Since this algorithm is not used in our proofs, we omit the verification of the equivalence between the algorithmic description and the static description.

\begin{lemma}\label{l.const_exp}
The exploration process $(\mathcal{E}, S)$ defined by~\eqref{e.E} and~\eqref{e.S} is efficient under $\P^\emptyset_{\D_\delta}$. Moreover, $\P^\emptyset_{\D_\delta}(B)=1$ for $B$ defined in~\eqref{e.B}.
\end{lemma}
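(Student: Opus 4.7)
The plan is to show that, for every configuration $\kappa$ with $\partial_{\D_\delta}\kappa = \emptyset$, there is a \emph{unique} $R \in \mathcal{E}$ with $\kappa_R \in S_R$, namely $R_\kappa := U(\kappa^\lop)$. Since $\P^\emptyset_{\D_\delta}$ charges only sourceless configurations (by~\eqref{e.P^emptyset_G}), existence of such an $R$ gives $\P^\emptyset_{\D_\delta}(B) = 1$, while uniqueness gives the efficiency condition~\eqref{e.efficient}.

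\textbf{Existence.}
Fix sourceless $\kappa$ and let $L := \kappa^\lop$, which exists by Lemma~\ref{l.kappa^lop_loop_decomposition} and is unique up to $=_{\R^2}$ by Lemma~\ref{l.lop_unique}; set $R_\kappa := U(L) \in \mathcal{E}$. To verify $\kappa_{R_\kappa} \in S_{R_\kappa}$ one must check both $\partial_{R_\kappa}\kappa_{R_\kappa} = \emptyset$ and $U\bigl((\kappa_{R_\kappa})^\lop\bigr) = R_\kappa$. For the parity condition, fix any vertex $x$: by disjointness of loops in $L$ (Lemma~\ref{l.L^conc}\eqref{i.L^conc_disjoint_non-self-cro}), $x$ lies on at most one $l \in L$, and all open $\kappa$-edges at $x$ are $l$-edges. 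If $l$ intersects $\D_\delta \setminus [\gamma]$, then $l^+ \subset R_\kappa$ and every open $\kappa$-edge at $x$ sits in $\EE(R_\kappa)$, giving an even count by sourcelessness of $\kappa$; otherwise $l \subset [\gamma]$, and disjointness prevents any $l$-edge at $x$ from sitting in any $(l')^+$-contribution to $R_\kappa$, leaving a count of $0$. The open edges of $\kappa_{R_\kappa}$ thus coincide with the edges of the crossing loops of $L$, so this sub-collection is itself a loop decomposition of $\kappa_{R_\kappa}$; Lemma~\ref{l.lop_unique} identifies it with $(\kappa_{R_\kappa})^\lop$, hence $U\bigl((\kappa_{R_\kappa})^\lop\bigr) = U(L) = R_\kappa$.

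\textbf{Uniqueness and main obstacle.}
Conversely, assume $R \in \mathcal{E}$ with $\kappa_R \in S_R$, and write $L_R := (\kappa_R)^\lop$ so that $R = U(L_R)$. Splitting the open $\kappa$-edges into those in $\EE(R)$ and those not—each half being sourceless and therefore loop-decomposable via Lemma~\ref{l.kappa^lop_loop_decomposition}—and then invoking the uniqueness in Lemma~\ref{l.lop_unique}, one obtains the dichotomy that each $l \in \kappa^\lop$ belongs either to $L_R$ or to the decomposition of the complementary half. Any loop of $L_R$ contained in $[\gamma]$ would then have every edge in $\EE(R)$ yet outside $\EE(\D_\delta \setminus [\gamma])$, forcing those edges into some $(l')^+$ with $l' \in L_R$ crossing, which contradicts the disjointness of $L_R$; so $L_R$ consists exactly of crossing loops of $\kappa^\lop$, and the dichotomy promotes this to $L_R =_{\R^2}$ all crossing loops, whence $R = U(L_R) = U(\kappa^\lop) = R_\kappa$. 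The main obstacle is the dichotomy step when $\kappa^\lop$ contains \emph{weakly simple} loops that are not simple: at a self-touching vertex $x_k$ the four $l$-edges meet and the parity condition permits a $2+2$ split between $\EE(R)$ and its complement, which would produce a rerouted loop in $L_R$ not found in $\kappa^\lop$. I would rule this out using the structure $R = U(L^*)$: either $x_k \in \VV(l^*)$ for some crossing $l^* \in L^*$, in which case $(l^*)^+$ puts the entire star of $x_k$ into $\EE(R)$, or $x_k$ is reached only through $\D_\delta \setminus [\gamma]$, in which case the weak-simplicity perpendicularity rule at $x_k$ combined with the geometry of $\partial[\gamma]$ recovers the all-or-nothing alternative and closes the dichotomy.
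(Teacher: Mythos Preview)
Your overall strategy—show that for each sourceless $\kappa$ there is a unique $R\in\mathcal E$ with $\kappa_R\in S_R$, namely $R=U(\kappa^\lop)$—is exactly the paper's, and your existence argument matches the paper's Part~2. The uniqueness argument also rests on the same mechanism as the paper (the $l^+$ structure), but your packaging of it has a wobble worth fixing.

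As written, the ``dichotomy via Lemma~\ref{l.lop_unique}'' step is circular: Lemma~\ref{l.lop_unique} applies only once you know $L_R\cup L'$ is a loop decomposition of $\kappa$, i.e.\ once you already have vertex-disjointness, which is precisely what the dichotomy asserts. You correctly locate the obstruction at self-touching vertices (at a degree-$2$ vertex parity alone forces all-or-none), and your first resolution case is the right one—but it should be run as a contradiction, not as a case split: if two of the four $l$-edges at $x_k$ lie in $\EE(R)$, they are open in $\varsigma$, hence on some $l^*\in L_R=\varsigma^\lop$; since you have already shown every loop of $L_R$ intersects $\D_\delta\setminus[\gamma]$, we get $(l^*)^+\subset R$, so all four edges at $x_k$ lie in $\EE(R)$, contradicting the $2$--$2$ split. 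This settles the dichotomy outright; your second case (``$x_k$ reached only through $\D_\delta\setminus[\gamma]$'' and an appeal to perpendicularity) never arises and should be dropped.

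The paper sidesteps the dichotomy framing entirely: it proves directly that $\varsigma^\lop =_{\R^2} L(\eta_\delta)$ (the crossing loops of $\eta_\delta^\lop$) by taking any $l\in\varsigma^\lop$, finding the unique $\tilde l\in\eta_\delta^\lop$ containing it, and observing that $\tilde l\setminus l\neq\emptyset$ would produce an edge in $l^+\setminus l\subset R$ that is open in $\eta_\delta$ but closed in $\varsigma$. This is the same $l^+$ trick you use, just applied without introducing the complementary decomposition~$L'$.
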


\begin{proof}

\textit{Part~1.}
Under $\P^\emptyset_{\D_\delta}$, $\eta_\delta$ satisfies $\partial_{\D_\delta} \eta_\delta$ a.s. Fix any such realization $\eta_\delta$ and any loop decomposition $\eta^\lop_\delta$ as in Remark~\ref{r.eta^lop}. To show the efficiency defined by~\eqref{e.efficient}, it suffices to show
\begin{align}\label{e.RneqR'=>}
     R\neq R' \quad \Longrightarrow\quad \{\eta_\delta:(\eta_\delta)_R \in S_R\}\cap \{\eta_\delta:(\eta_\delta)_{R'} \in S_{R'}\}=\emptyset.
\end{align}
Recall the relations $\subset_{\R^2}$ and $=_{\R^2}$ for loop collections defined above Lemma~\ref{l.lop_unique}.
Note that~\eqref{e.RneqR'=>} follows from the claim:

\noindent\textit{If $(\eta_\delta)_R = \varsigma$ for some $R\in\mathcal{E}$ and some $\varsigma\in S_R$, then
\begin{align}\label{e.L(eta)=L_R(varsigma)}
    L(\eta_\delta) =_{\R^2} \varsigma^\lop
\end{align}
where we have set
\begin{align}\label{e.L(eta_delta)}
    L(\eta_\delta)= \Ll\{l\in \eta^\lop_\delta: \: l\cap \Ll(\D_\delta\setminus[\gamma]\Rr)\neq \emptyset\Rr\}.
\end{align}
}

Indeed, assuming the claim is true, if $\eta_\delta$ belongs to the intersection in~\eqref{e.RneqR'=>}, then~\eqref{e.L(eta)=L_R(varsigma)} implies that $\varsigma^\lop =_{\R^2} (\varsigma')^\lop$ for some $\varsigma\in S_R$ and $\varsigma'\in S_{R'}$. This along with~\eqref{e.S} yields $R=R'$.

Now, we prove the claim. Let $R$ and $\varsigma$ be given as in the condition. Let $l$ be any loop in $\varsigma^\lop$ and we have $l\subset l^+\subset R$. Since $\eta^\lop_\delta$ is a loop decomposition of $\eta_\delta$ and $(\eta_\delta)_R = \varsigma$, we have that $l\subset \cup \tilde l$ where the union is taken over loops $\tilde l$ intersecting $l$ in $L(\eta_\delta)$. The loops in $L(\eta_\delta)$ are disjoint and $l$ is connected, so $l\subset \tilde l$ (inclusion as graphs) for some $\tilde l\in L(\eta_\delta)$. We argue that we must have 
\begin{align}\label{e.tilde_l_setminus}
    \tilde l \setminus l =\emptyset
\end{align}
because otherwise, there is an edge $e \in l^+\setminus l$ contained in $\tilde l$, in which case $e$ is closed in $\varsigma$ and open in $\eta_\delta$ contradicting $(\eta_\delta)_R = \varsigma$. Therefore, we conclude that $l$ and $\tilde l$ are equal as graphs (i.e. $l=_{\R^2}\tilde l$). We have thus proven $\varsigma^\lop\subset_{\R^2} L(\eta_\delta)$.

We turn to the other inclusion. Let $\tilde l \in L(\eta_\delta)$. We show that
\begin{align}\label{e.E(tilde_l)cap}
    \EE(\tilde l )\cap \EE\Ll(\D_\delta\setminus [\gamma]\Rr) \neq \emptyset.
\end{align}
Suppose otherwise, then by the definition of $L(\eta_\delta)$, there exists some $x\in \VV(\tilde l)\cap \VV(\D_\delta\setminus [\gamma])$. This implies that there is an edge $e$ with one endpoint being $x$ such that $e\in \EE(\tilde l) \subset \EE([\gamma])$. Since $[\gamma]$ is a disc, we must have $x\in \VV([\gamma])$, contradicting the fact that $x\in \VV(\D_\delta\setminus [\gamma])$. Hence,~\eqref{e.E(tilde_l)cap} must hold. 

Let $e$ be any edge in the intersection in~\eqref{e.E(tilde_l)cap}. Since $(\eta_\delta)_R=\varsigma$ and $e\in R$ in $\eta_\delta^\lop$, we must have $e$ is on some $l\in \varsigma^\lop$ and thus $l\cap \tilde l\neq\emptyset$.
Due to $l\subset R$, the assumption $(\eta_\delta)_R = \varsigma$ implies that the edges on $l$ are open in $\eta_\delta$, meaning that $l$ is a connected subset consisting of open edges in $\eta_\delta$. As $\tilde l$ is a connected component in $\eta_\delta$, we have $ l\subset \tilde l$. Then, we derive~\eqref{e.tilde_l_setminus} using the same argument below that display. We conclude that $l =_{\R^2} \tilde l$ and thus $L(\eta_\delta)\subset_{\R^2} \varsigma^\lop$. We have shown~\eqref{e.L(eta)=L_R(varsigma)} and verified the claim.

\textit{Part~2.}
It remains to show that $\P^\emptyset_{\D_\delta}(B)=1$. In view of the definition of $B$ in~\eqref{e.B}, it suffices to find $R\in\mathcal{E}$ and $\varsigma\in S_R$ such that $(\eta_\delta)_R =\varsigma$. We take
\begin{align}\label{e.R=(...)}
    R = U(L(\eta_\delta))
\end{align}
for $L(\eta_\delta)$ given in~\eqref{e.L(eta_delta)} and $U$ defined in~\eqref{e.U(L)=}. As a subcollection of $\eta_\delta^\lop$, $L(\eta_\delta)$ is collection of disjoint weakly simple loops. Hence, we have $R\in \mathcal{E}$ by~\eqref{e.E}.

Then, we take $\varsigma\in \{0,1\}^{\EE(R)}$ by setting $\varsigma_e = 1$ if $e\in \EE(l)$ for some $l\in L(\eta_\delta)$ and $\varsigma_e=0$ otherwise. 
Since the loops in $\eta_\delta^\lop$ are disjoint and weakly simple, so are the loops in $L(\eta_\delta)$. 
Then, $L(\eta_\delta)$ is a loop decomposition of $\varsigma$ as described in Remark~\ref{r.extend_varsigma}. By~\eqref{e.decompsoable=>sourceless}, we have $\partial_R \varsigma=0$ and thus $\varsigma\in \mathcal{R}$ given in~\eqref{e.mathcal(D)_R}. Let $\varsigma^\lop$ be the loop decomposition given in Remark~\ref{r.extend_varsigma}. By Lemma~\ref{l.lop_unique}, we recover~\eqref{e.L(eta)=L_R(varsigma)}.
Plugging this into~\eqref{e.R=(...)} and comparing it with~\eqref{e.S}, we get $\varsigma\in S_R$. Lastly, it is clear from~\eqref{e.L(eta_delta)} and~\eqref{e.R=(...)} that $\varsigma = (\eta_\delta)_R$, which completes the proof.
\end{proof}

We need the lemma concerning the geometry of the explored region and the probability measure conditioned on the exploration.

\begin{lemma}\label{l.explore_markov}
Let $\varsigma\in S_R$ for some $R\in\mathcal{E}$. The following holds:
\begin{enumerate}
    \item there is some $n\in\N$ and a collection $(K_i)_{i=1}^n$ of disjoint $\delta$-discrete discs in $\D_\delta$ such that $\D_\delta\setminus  R = \cup_{i=1}^n K_i$;
    \item under~\ref{i.H_basics}, for every $i\in \{1,\dots,n\}$ and every event $\mathscr{E}$ depending only on $(\eta_\delta)_{K_i}$, we have $\P^\emptyset_{\D_\delta}(\mathscr{E}\,|\,(\eta_\delta)_R = \varsigma)= \P^\emptyset_{K_i}(\mathscr{E})$.
\end{enumerate}
\end{lemma}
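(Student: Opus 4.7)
The plan is to treat the two parts sequentially; the geometric statement~(1) carries most of the content, while (2) follows from iterated conditioning with the single-disc Markov property.

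For~(1), I first argue that every loop in $\varsigma^\lop$ must meet $\D_\delta \setminus [\gamma]$: since $\varsigma \in \mathcal{D}_R$, every such loop lies in $R = U(\varsigma^\lop)$; a loop entirely inside $[\gamma]$ would then have to sit in $\bigcup_{l' \in \varsigma^\lop,\, l' \cap (\D_\delta \setminus [\gamma]) \neq \emptyset} (l')^+$, which is precluded by the disjointness of the loops in $\varsigma^\lop$ (Lemma~\ref{l.kappa^lop_loop_decomposition}), since membership in $(l')^+$ would force a shared vertex with $l'$. Consequently, $R = (\D_\delta \setminus [\gamma]) \cup \bigcup_{l \in \varsigma^\lop} l^+$, and because $\EE(l^+)$ contains every lattice edge incident to $\VV(l)$, the loops of $\varsigma^\lop$ together with $\gamma$ form a family of barriers that separate $\D_\delta \setminus R$ from themselves. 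Invoking planarity, this finite family of disjoint non-self-crossing curves (weakly simple loops behave well thanks to the perpendicular-turn condition at self-touching vertices) decomposes the interior of $[\gamma]$ into finitely many simply connected regions, and the portion of $\D_\delta$ strictly inside each such region, together with its bounding non-self-crossing loop, is a $\delta$-discrete disc $K_i$.

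For~(2), fix $i$ and set $\widetilde R_i = \D_\delta \setminus K_i = R \cup \bigcup_{j \neq i} K_j$, which is the complement of a single disc and so falls within the scope of~\ref{i.H_basics}. The event $\{(\eta_\delta)_R = \varsigma\}$ decomposes into the disjoint union of $\{(\eta_\delta)_{\widetilde R_i} = \widetilde\kappa\}$ over extensions $\widetilde\kappa$ of $\varsigma$. Because the edge sets of $R, K_1, \dots, K_n$ are pairwise disjoint and $\partial_R \varsigma = \emptyset$, sourcelessness of $\widetilde\kappa$ on $\widetilde R_i$ is equivalent to sourcelessness of $\widetilde\kappa_{K_j}$ on $K_j$ for every $j \neq i$, and by the definition of $\P^\emptyset_{\D_\delta}$ only these extensions have nonzero probability. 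Applying~\ref{i.H_basics} to $K_i$ and each such $\widetilde\kappa$ gives
\[
\P^\emptyset_{\D_\delta}\bigl(\mathscr{E} \cap \{(\eta_\delta)_{\widetilde R_i} = \widetilde\kappa\}\bigr) = \P^\emptyset_{K_i}(\mathscr{E}) \cdot \P^\emptyset_{\D_\delta}\bigl((\eta_\delta)_{\widetilde R_i} = \widetilde\kappa\bigr);
\]
summing over $\widetilde\kappa$ and dividing by $\P^\emptyset_{\D_\delta}((\eta_\delta)_R = \varsigma)$ yields the claim.

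The main obstacle is the topological verification in part~(1): one must argue, carefully, that each connected component of $\D_\delta \setminus R$ is a $\delta$-discrete disc in the precise sense of Section~\ref{s.discrete_objects}, not merely a simply connected planar region. The thickening $l \mapsto l^+$ (rather than $l$ itself) in the definition of $U$ is exactly what is needed to separate the lattice on either side of a weakly simple loop, including at self-touching vertices; combined with the disjointness of the loops in $\varsigma^\lop$, this makes the bounding loop of each component a bona fide non-self-crossing $\delta$-discrete loop, as required by the definition of a $\delta$-discrete disc.
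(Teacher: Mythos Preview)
Your argument is correct, and Part~(2) is essentially identical to the paper's: both condition further on the configuration in $\hat K=\cup_{j\neq i}K_j$ and apply the single-disc Markov property~\eqref{e.markov_prop}. The paper is more explicit about the sourcelessness check $\partial_{\D_\delta\setminus K_i}\eta=\emptyset$, carrying out a vertex-by-vertex case analysis (their cases $x\in\VV(\D_\delta)\setminus\VV(K_i)$, $x\in\VV(K_i)\setminus\partial K_i$, $x\in\partial K_i$), whereas you invoke edge-disjointness of $R,K_1,\dots,K_n$; note that your equivalence ``$\partial_{\widetilde R_i}\widetilde\kappa=\emptyset\Leftrightarrow\partial_{K_j}\widetilde\kappa_{K_j}=\emptyset$ for all $j\neq i$'' also needs the \emph{vertex}-disjointness of the $K_j$'s (so that no vertex sees edges from two distinct $K_j$'s), which is available from Part~(1) but worth citing.

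For Part~(1) the routes genuinely diverge. The paper takes a one-line algebraic path: from $R=U(\varsigma^\lop)$ it writes
\[
\D_\delta\setminus R=[\gamma]\cap\Bigl(\bigcap_{l\in\varsigma^\lop}(l^+)^\complement\Bigr)
\]
and then uses the local fact that for any $\delta$-disc $K$, the set $K\cap(l^+)^\complement$ is a disjoint union of $\delta$-discs; iterating over the finitely many loops finishes. Your approach instead argues topologically that the disjoint weakly simple loops, once thickened to $l^+$, act as barriers separating $[\gamma]$ into simply connected pieces. Both work, but the paper's reduction is cleaner precisely at the point you flag as the main obstacle: the single claim ``disc $\cap\,(l^+)^\complement=$ union of disjoint discs'' is local, purely graph-theoretic, and sidesteps the continuous-versus-discrete translation you have to manage. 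Incidentally, your preliminary observation that every loop of $\varsigma^\lop$ meets $\D_\delta\setminus[\gamma]$ is exactly what justifies dropping the restriction ``$l\cap(\D_\delta\setminus[\gamma])\neq\emptyset$'' in the intersection above---the paper uses this implicitly.
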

\begin{proof}
For brevity, we write $\eta =\eta_\delta$.

\textit{Part~1.}
By~\eqref{e.S}, we have $R=U(\eta^\lop)$. Recall the definition of $U$ in~\eqref{e.U(L)=}. Then, we can derive
\begin{align}\label{e.DsetminusR=[gamma]...}
    \D_\delta \setminus R= [\gamma] \cap \Ll(\cap_{l\in \varsigma^\lop}\Ll(l^+\Rr)^\complement\Rr).
\end{align}
The definition of $l^+$ in~\eqref{e.l^+=} implies that $K\cap \Ll(l^+\Rr)^\complement$ is a union of disjoint discs for any disc $K$. Use this argument inductively and see that the right-hand side of~\eqref{e.DsetminusR=[gamma]...} is a disjoint union of discs. This gives the first result.

\textit{Part~2.}
Set $\hat K = (\cup_{j=1}^n K_j)\setminus K_i$. We first show
\begin{align}\label{e.partialpartial=>partial}
    \partial_R \eta =\emptyset,\ \partial_{\D_\delta} \eta =\emptyset \quad\Longrightarrow \quad \partial_{R\cup \hat K} \eta = \emptyset.
\end{align}
By the first part, we have 
\begin{align}\label{e.D-K=RUK}
    \D_\delta \setminus K_i  = R\cup\hat K.
\end{align}
By the definition of discs (see Section~\ref{s.discrete_objects}), we see that
\begin{align}\label{e.xy=>x}
    xy \in \EE(K_j) \qquad \Longrightarrow \qquad x\in \VV(K_j).
\end{align}
for every $j$, every $x\in \VV(\D_\delta)$, and every $y\sim x$. To show the right-hand side of~\eqref{e.partialpartial=>partial}, by~\eqref{e.D-K=RUK} and the definition of sources in~\eqref{e.source},
it suffices to verify that
\begin{align}\label{e.s(x)=even}
    s(x) = \sum_{y:\:xy\in \EE(\D_\delta\setminus K_i)}\eta_{xy} \ \in 2\Z, \quad\forall x\in \VV(\D_\delta).
\end{align}
We consider three cases depending on the location of $x$. In the following, $y$ is a neighboring vertex of $x$.

\textit{Case~1: $x\in \VV(\D_\delta)\setminus \VV(K_i)$.}
For every $xy \in \EE(\D_\delta)$, we must have $xy \in \EE(\D_\delta\setminus K_i)$ because otherwise $xy \in \E(K_i)$ which by~\eqref{e.xy=>x} implies $x\in \VV(K_i)$. Hence, along with $\partial_{\D_\delta} \eta=\emptyset$, we have
\begin{align*}
    s(x) = \sum_{y:\:xy\in \E(\D_\delta)}\eta_{xy}\ \in 2\Z.
\end{align*}

\textit{Case~2: $x\in \VV(K_i)\setminus\partial K_i$.}
Recall the definition of the inner boundary in~\eqref{e.boundary}, which implies that $y \in \VV(K_i)$ and thus $xy\in \EE(K_i)$ for every $y\sim x$. Hence, we have $s(x)=0$.

\textit{Case~3: $x\in \partial K_i$.}
Note that $xy \not\in \EE(K_j)$ for every $j\neq i$ because otherwise~\eqref{e.xy=>x} would imply $x\in K_j$ for some $j\neq i$.
Using this and~\eqref{e.D-K=RUK}, for every $xy\in \EE(\D_\delta\setminus K_i)$, we must have $xy \in \EE(R)$. Therefore, along with $\partial_R \eta = \emptyset$, we have
\begin{align*}
    s(x) = \sum_{y:\: xy\in \EE(R)}\eta_{xy}\ \in 2\Z.
\end{align*}

Combining these, we have shown~\eqref{e.s(x)=even} which yields~\eqref{e.partialpartial=>partial}. Next, we apply~\eqref{e.partialpartial=>partial} and the Markov property~\eqref{e.markov_prop} given by~\ref{i.H_basics} to finish the proof. Let $\mathscr{E}$ be given as in the statement. We compute
\begin{align*}
    \P^\emptyset_{\D_\delta}\Ll(\mathscr{E}\,\big|\, \eta_R = \varsigma\Rr) = \sum_{\kappa \in \{0,1\}^{\EE(\hat K)}}\P^\emptyset_{\D_\delta}\Ll(\mathscr{E}\,\big|\, \eta_R = \varsigma,\, \eta_{\hat K}=\kappa\Rr)\P^\emptyset_{\D_\delta}\Ll(\eta_R = \varsigma,\, \eta_{\hat K}=\kappa\big|\, \eta_R = \varsigma\Rr).
\end{align*}
Due to $\partial_{\D_\delta} \eta=\emptyset$ under $\P^\emptyset_{\D_\delta}$, the condition $\eta_R=\varsigma\in S_R$, and the definition of $S_R$ in~\eqref{e.S}, the left-hand side of~\eqref{e.partialpartial=>partial} is satisfied a.s. Hence, we use~\eqref{e.partialpartial=>partial}, the Markov property~\eqref{e.markov_prop}, and~\eqref{e.D-K=RUK} to get
\begin{align*}
    \P^\emptyset_{\D_\delta}\Ll(\mathscr{E}\,\big|\, \eta_R = \varsigma,\, \eta_{\hat K}=\kappa\Rr) =  \P^\emptyset_{\D_\delta\setminus (R\cup \hat K)}\Ll(\mathscr{E}\Rr) = \P^\emptyset_{K_i}\Ll(\mathscr{E}\Rr).
\end{align*}
Inserting this to the previous display gives the desired result.
\end{proof}

\subsection{Annulus crossing probability}

Under our assumptions, we show that, in the limit, if $\eta_\delta$ does not cross an annulus $A$, then neither does $\omega_\delta$.

\begin{lemma}\label{l.discon_bdy}
Suppose that $(\eta_\delta,\omega_\delta)$ satisfies~\ref{i.H_basics}--\ref{i.H_bdy_conn}. Then, for every dyadic annulus~$A$,
\begin{align}\label{e.limlimsupP<->}
    \lim_{r\to 0}\limsup_{\delta\to0}  \P^\emptyset_{\D_\delta}\Ll(\Ll\{ (\partial_0 A)_r \stackrel{\omega_\delta\cap A}{\longleftrightarrow} \Ll(\partial_1 A\Rr)_r  \Rr\}\cap \Ll(\circledcirc_A^{\eta_\delta}\Rr)^\complement\Rr) = 0.
\end{align}

\end{lemma}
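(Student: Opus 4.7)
First, I would reduce via~\ref{i.H_eta_cross_similar} to the same event with $A$ replaced by the slightly smaller sub-annulus $A^\eps$, then run an exploration of the loops of $\eta_\delta$ from outside $\partial_1 A$, identify a macroscopic cavity disc $K_*$ around $\partial_0 A$, and finally invoke~\ref{i.H_bdy_conn} inside this cavity. Because $A^\eps\subset A$, any connected crossing of $A$ induces a connected crossing of $A^\eps$, so $\circledcirc_A^{\eta_\delta}\subset\circledcirc_{A^\eps}^{\eta_\delta}$ and
\[
\Ll(\circledcirc_A^{\eta_\delta}\Rr)^\complement \;\subset\; \Ll(\circledcirc_{A^\eps}^{\eta_\delta}\Rr)^\complement \cup \Ll(\circledcirc_{A^\eps}^{\eta_\delta}\setminus\circledcirc_A^{\eta_\delta}\Rr),
\]
whose last term vanishes as $\eps\to 0$ along $\limsup_{\delta\to 0}$ by~\ref{i.H_eta_cross_similar}. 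It thus suffices to show, for each fixed $\eps>0$, that $\P^\emptyset_{\D_\delta}\!\Ll(G_{r,\delta}\cap(\circledcirc_{A^\eps}^{\eta_\delta})^\complement\Rr)\to 0$ in the $\lim_{r\to 0}\limsup_{\delta\to 0}$ sense, where $G_{r,\delta}:=\{(\partial_0 A)_r\stackrel{\omega_\delta\cap A}{\longleftrightarrow}(\partial_1 A)_r\}$.

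\textbf{Exploration and cavity.} Set $c:=d(\partial_0 A,\partial_0 A^\eps)>0$ and take $\gamma$ to be a $\delta$-discrete non-self-crossing approximation of $\partial_1 A$, so that $\D_\delta\setminus[\gamma]$ lies outside $\partial_1 A$ up to an $O(\delta)$ error. Applying the exploration $(\mathcal{E},S)$ of Section~\ref{s.exp_loop_from_outside} and using Lemmas~\ref{l.const_exp} and~\ref{l.eff_exp} gives
\[
\P^\emptyset_{\D_\delta}\!\Ll(G_{r,\delta}\cap(\circledcirc_{A^\eps}^{\eta_\delta})^\complement\Rr)
=\sum_{R\in\mathcal{E},\,\varsigma\in S_R}\P^\emptyset_{\D_\delta}\!\Ll(G_{r,\delta}\cap(\circledcirc_{A^\eps}^{\eta_\delta})^\complement \,\big|\, (\eta_\delta)_R=\varsigma\Rr)\,\P^\emptyset_{\D_\delta}\!\Ll((\eta_\delta)_R=\varsigma\Rr).
\]
The key geometric claim is: on $(\circledcirc_{A^\eps}^{\eta_\delta})^\complement$, no revealed loop has a vertex in $(\partial_0 A)_{c/2}$. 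Indeed such a loop would be connected and contain both a point inside $\partial_0 A^\eps$ and a point beyond $\partial_1 A$; any continuous arc in the loop joining these two points is forced to traverse $A^\eps$, producing a connected sub-arc of $\eta_\delta\cap A^\eps$ joining $\partial_0 A^\eps$ to $\partial_1 A^\eps$, which is precisely the event ruled out by $(\circledcirc_{A^\eps}^{\eta_\delta})^\complement$. Since the neighborhoods $l^+$ widen revealed loops by only $\delta$, this shows that $R$ avoids $(\partial_0 A)_{c/4}$ for small $\delta$; by Lemma~\ref{l.explore_markov} the unexplored region $\D_\delta\setminus R=\bigsqcup_i K_i$ is a disjoint union of discrete discs, and one of them, $K_*$, contains the connected ring $(\partial_0 A)_{c/4}$.

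\textbf{Box cover and conclusion.} Fix $R_0\in(0,c/20)$ and cover $\partial_0 A$ by finitely many boxes $(x_j+[-R_0,R_0]^2)_{j=1}^m$ with $x_j\in\partial_0 A$; for $r$ small, $(\partial_0 A)_r\subset\bigcup_j(x_j+[-R_0,R_0]^2)$. The inclusion $(\partial_0 A)_{c/4}\subset K_*$ forces $x_j+[-2R_0,2R_0]^2\subset K_*$ for every $j$, while $K_*\not\subset x_j+[-3R_0,3R_0]^2$ because $K_*\supset(\partial_0 A)_{c/4}$ has macroscopic diameter. Lemma~\ref{l.explore_markov} combined with the independence of $\bb^{\bt_\delta}_\delta$ from $\eta_\delta$ identifies the conditional law of $\omega_\delta$ on $\EE(K_*)$ given $(\eta_\delta)_R=\varsigma$ with its law under $\P^\emptyset_{K_*}$. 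On $G_{r,\delta}$, the $\omega_\delta$-path in $A$ restricted to $K_*$ must connect some box $x_j+[-R_0,R_0]^2$ to $\partial K_*$ (or end within distance $r$ of $\partial_1 A\subset(\partial K_*)_{O(\delta)}$), so a union bound gives
\[
\P^\emptyset_{\D_\delta}\!\Ll(G_{r,\delta}\cap(\circledcirc_{A^\eps}^{\eta_\delta})^\complement \,\big|\, (\eta_\delta)_R=\varsigma\Rr)
\le\sum_{j=1}^m \P^\emptyset_{K_*}\!\Ll(x_j+[-R_0,R_0]^2\stackrel{\omega_\delta\cap K_*}{\longleftrightarrow}(\partial K_*)_{2r}\Rr),
\]
and each summand is controlled, uniformly in $K_*$, by the supremum appearing in~\ref{i.H_bdy_conn}, which vanishes as $r\to 0$ along $\limsup_{\delta\to 0}$. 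Summing over $(R,\varsigma)$ (the weights sum to $1$) and sending $r\to 0$, then $\eps\to 0$, yields~\eqref{e.limlimsupP<->}.

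\textbf{Main obstacle.} The crux is the cavity claim used to set up $K_*$: that on $(\circledcirc_{A^\eps}^{\eta_\delta})^\complement$ the revealed loops stay clear of a macroscopic neighborhood of $\partial_0 A$. The delicate case is a revealed loop that enters the hole inside $\partial_0 A$ and re-emerges elsewhere; ruling this out rests on the planar-topology observation that any arc from inside $\partial_0 A^\eps$ to beyond $\partial_1 A$ must cross the entire annulus $A^\eps$, thereby producing the forbidden spanning connected set.
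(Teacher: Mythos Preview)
Your approach mirrors the paper's proof: reduce via~\ref{i.H_eta_cross_similar}, run the loop exploration of Section~\ref{s.exp_loop_from_outside} from outside $\partial_1 A$, locate the cavity disc containing $\partial_0 A$, then cover with boxes and invoke~\ref{i.H_bdy_conn}. The paper introduces an auxiliary annulus $\tilde A$ with $\partial_1\tilde A=\partial K_*$ and places its boxes near $\partial K_*$ on a fixed $\lambda_\eps$-grid, whereas you place boxes with centers on $\partial_0 A$; both variants work, and yours has the mild advantage that the box centers are fixed independently of the exploration outcome.

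There is one incorrect assertion in your justification. The inclusion $\partial_1 A\subset(\partial K_*)_{O(\delta)}$ is false in general: revealed loops may touch portions of $\gamma\approx\partial_1 A$, so those portions lie in $R$ and can be far from $K_*$ and from $\partial K_*$. What you actually need (and what is true) is that any point $p\in K_*$ with $d(p,\partial_1 A)\le r$ satisfies $d(p,\partial K_*)\le r+O(\delta)$. Indeed, $K_*\subset[\gamma]$ and $d(p,\gamma)\le r+O(\delta)$; take $q\in\gamma$ with $d(p,q)\le r+O(\delta)$. If $q\notin K_*$ the segment from $p$ to $q$ exits $K_*$, while if $q\in K_*$ then $q\in\partial K_*$ because $q\in\gamma=\partial[\gamma]$ has a lattice neighbor outside $[\gamma]\supset K_*$. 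Either way $d(p,\partial K_*)\le d(p,q)$. With this correction, the case where the $\omega_\delta$-path stays entirely in $K_*$ still lands in $(\partial K_*)_{2r}$, your displayed bound holds, and the rest of the argument goes through as written.
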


\begin{proof}
For simplicity, we omit $\delta$ from the subscript and write $\omega$ and $\eta$ instead. 
Let $A^\eps\subset A$ be given as in~\ref{i.H_eta_cross_similar}. By the first bullet point in~\ref{i.H_eta_cross_similar}, there is $c_\eps>0$ such that $d(\partial_0 A^\eps, \partial_0 A)>c_\eps$ where $d$ is the Euclidean distance between subsets of $\R^2$.

Denote the event in~\eqref{e.limlimsupP<->} by $\{\cdots\}$, we have by~\ref{i.H_eta_cross_similar} that
\begin{align*}
    \P^\emptyset_{\D_\delta}\{\cdots\} \leq \P^\emptyset_{\D_\delta}\Ll(\Ll\{ (\partial_0 A)_r \stackrel{\omega\cap A}{\longleftrightarrow} \Ll(\partial_1 A\Rr)_r  \Rr\}\cap \Ll(\circledcirc_{A^\eps }^\eta\Rr)^\complement\Rr) + o_{\eps,\delta}(1),
\end{align*}
where $\lim_{\eps\to0}\limsup_{\delta\to 0} o_{\eps,\delta}(1) =0$.

We introduce the discrete approximations of $A$ and $A^\eps$. Since $A$ is dyadic, for sufficiently small $r$, we can find a $\delta$-discrete annulus $A_\delta$ satisfying $A\subset A_\delta$ and $d(\partial_i A,\partial_i A_\delta)\leq \delta$ for both $i=0,1$. For each $r>0$, we can find $\delta(r)>0$ such that for all $\delta<\delta(r)$,
\begin{align*}
    (\partial_0 A)_r \stackrel{\omega\cap A}{\longleftrightarrow} \Ll(\partial_1 A\Rr)_r \qquad \Longrightarrow\qquad \Ll(\partial_0 A_\delta\Rr)_{2r} \stackrel{\omega\cap A_\delta}{\longleftrightarrow} \Ll(\partial_1 A_\delta\Rr)_{2r}.
\end{align*}
For each $\eps$, we can find $\delta(\eps)$ such that, for all $\delta<\delta(\eps)$, there is a $\delta$-discrete annulus $A^\eps_\delta$ with $\partial_1 A^\eps_\delta = \partial_1 A_\delta$, $\partial_0 A^\eps_\delta$ inside $\partial_0 A^\eps$, and $d(\partial_0 A^\eps, \partial_0 A^\eps_\delta)<\delta$. Then, we see that 
$A^\eps\subset A^\eps_\delta$ and $\circledcirc_{A^\eps_\delta }^\eta\subset  \circledcirc_{A^\eps }^\eta$.
The last property along with the above display implies that
\begin{align}\label{e.cdots<<->}
    \P^\emptyset_{\D_\delta}\{\cdots\} \leq \P^\emptyset_{\D_\delta}\{\longleftrightarrow\} + o_{\eps,\delta}(1),\qquad\forall \delta <\delta(\eps,r).
\end{align}
where we have set $\delta(\eps,r)= \min\{\delta(\eps),\delta(r)\}$ and
\begin{align*}
    \{\longleftrightarrow\} = \Ll\{\Ll(\partial_0 A_\delta\Rr)_{2r} \stackrel{\omega\cap A_\delta}{\longleftrightarrow} \Ll(\partial_1 A_\delta\Rr)_{2r} \Rr\}\cap \Ll(\circledcirc_{A^\eps_\delta }^\eta\Rr)^\complement.
\end{align*}
By making $\delta(\eps,r)$ smaller, we can make sure
\begin{align}\label{e.d()>c_eps}
    d\Ll(\partial_0 A^\eps_\delta, \partial_0 A_\delta\Rr)>c_\eps/2,\qquad\forall \delta <\delta(\eps,r).
\end{align}

We apply results from previous sections. Recall the exploration process that starts outside a loop denoted by $\gamma$ in Section~\ref{s.exp_loop_from_outside}. Here, we set $\gamma =\partial_1 A_\delta$. Let $(\mathcal{E},S)$ be an exploration process defined by~\eqref{e.E} and~\eqref{e.S}.
Lemma~\ref{l.const_exp} implies that $(\mathcal{E},S)$ is efficient under $\P^\emptyset_{\D_\delta}$, while the second part of the lemma implies 
$ \P^\emptyset_{\D_\delta}\{\longleftrightarrow\} =  \P^\emptyset_{\D_\delta}\Ll(\{\longleftrightarrow\}\,|\, B\Rr)$ for $B$ defined in~\eqref{e.B} for $(\mathcal{E},S)$. Now, we use  Lemma~\ref{l.eff_exp} to get
\begin{align}\label{e.P(longrightleftarrow)}
    \P^\emptyset_{\D_\delta}\{\longleftrightarrow\} =\frac{\sum_{R\in\mathcal{E}}\sum_{\varsigma\in S_R}\P^\emptyset_{\D_\delta}(\{\longleftrightarrow\}|\eta_R =\varsigma)\P^\emptyset_{\D_\delta}(\eta_R=\varsigma)}{\sum_{R\in\mathcal{E}}\sum_{\varsigma\in S_R}\P^\emptyset_{\D_\delta}(\eta_R=\varsigma)}.
\end{align}

By the definitions of $S_R$ in~\eqref{e.S} and $U$ in~\eqref{e.U(L)=},
\begin{align}\label{e.R=A...}
    R = \Ll(\D_\delta \setminus [\partial_1 A_\delta]\Rr)\cup \Ll(\cup_{l\in \varsigma^\lop:\, l\cap \partial_1 (\D_\delta \setminus [\partial _1 A_\delta])\neq\emptyset}\,l^+\Rr)
\end{align}
where $[\partial_1 A_\delta]$ is the disc with boundary $\partial_1 A_\delta$. Recall the definition of $l^+$ above~\eqref{e.U(L)=}. Here, we interpret $l^+$ as $l$ with additional edges, each of which has an endpoint in $\VV(l)$ and closed in $\varsigma$.
Due to $\eta_R = \varsigma$, we know all the loops in $\eta$ intersecting the outside of $\partial_1 A_\delta$ are a part of $R$. Therefore, we deduce that $R\cap \partial_0 A^\eps_\delta \neq \emptyset$ implies $\circledcirc_{A^\eps_\delta}^\eta$ and thus
\begin{align}\label{e.RcapAneqempty}
    R\cap \partial_0 A^\eps_\delta \neq \emptyset \qquad\Longrightarrow \qquad \P^\emptyset_{\D_\delta}(\{\longleftrightarrow\}|\eta_R =\varsigma) =0, \quad\forall \varsigma \in S_R.
\end{align}

Now, we focus on $R\in \mathcal{E}$ satisfying
\begin{align}\label{e.RcapA=empty}
    R \cap \partial_0 A^\eps_\delta =\emptyset.
\end{align}
Using this and Lemma~\ref{l.explore_markov}, there is a disc $K$ satisfying $\partial_0 A^\eps_\delta \subset K \subset \D_\delta \setminus R$ and
\begin{align}\label{e.P^emptyset=P_K}
    \P^\emptyset_{\D_\delta}(\mathscr{E}\,|\,(\eta_\delta)_R = \varsigma)= \P^\emptyset_K(\mathscr{E})
\end{align}
for any event $\mathscr{E}$ depending only on $\eta_K$. We define an annulus $\tilde A$ by setting
\begin{align}\label{e.tilde-A}
    \partial_0\tilde A = \partial_0 A_\delta,\qquad \partial_1 \tilde A =\partial K.
\end{align}
Since $\D_\delta \setminus R\subset [\partial_1 A_\delta]$ from~\eqref{e.R=A...}, we deduce that
\begin{align}\label{e.partial_0Aconnects...}
    \Ll(\partial_0 A_\delta\Rr)_{2r} \stackrel{\omega\cap A_\delta}{\longleftrightarrow} \Ll(\partial_1 A_\delta\Rr)_{2r}\quad \Longrightarrow\quad \Ll(\partial_0 \tilde A\Rr)_{2r} \stackrel{\omega\cap \tilde A}{\longleftrightarrow} \Ll(\partial_1 \tilde A\Rr)_{2r},
\end{align}
from the property of $K$.
As $\partial K$ is outside $\partial_0 A^\eps_\delta$,~\eqref{e.tilde-A} and~\eqref{e.d()>c_eps} imply $d\Ll(\partial_0\tilde A,\partial_1\tilde A\Rr)> \frac{1}{2} c_\eps$. Using this and~\eqref{e.partial_0Aconnects...}, we have that under~\eqref{e.RcapA=empty},
\begin{align} \label{e.I+II}
    \P^\emptyset_{\D_\delta}(\{\longleftrightarrow\}|\eta_R =\varsigma)\leq \P^\emptyset_K\Ll(\Ll(\partial_0\tilde  A\Rr)_{2r} \stackrel{\omega\cap\tilde A}{\longleftrightarrow} \Ll(\partial_1\tilde  A\Rr)_{2r}  ,\ d\Ll(\partial_0\tilde A,\partial_1\tilde A\Rr)>c_\eps/2 \Rr)
\end{align}
where we used~\eqref{e.P^emptyset=P_K} since the event only depends on $\eta_K$ due to $\tilde A\subset K$ by~\eqref{e.tilde-A}.

We set $\lambda_\eps = 10^{-2}c_\eps$ and let $N_\eps =\sup_{\delta\in(0,1]}\Ll|\D_\delta\cap\lambda_\eps \Z^2\Rr| $, which is finite.
Set $\Gamma=\Gamma(\eps,\tilde A)$ to be the set of points in $\tilde A$ of distance $3\lambda_\eps$ to $\partial_1 \tilde A$.
There is an integer $k = k(\tilde A)\leq N_\eps$ and $\{x_1,\dots,x_k\} \in \D_\delta\cap \lambda_\eps \Z^2$ such that
\begin{itemize}
    \item $\Gamma \subset \cup_{i=1}^k \Lambda_{\lambda_\eps}(x_i)$,
    \item $\Lambda_{2\lambda_\eps}(x_i) \subset \tilde A$ for all $i\in\{1,\dots,k\}$;
    \item $\Lambda_{3\lambda_\eps}(x_i) \not\subset \tilde A$ for all $i\in\{1,\dots,k\}$.
\end{itemize}
By these and~\eqref{e.I+II}, we obtain
\begin{align*}
    \P^\emptyset_{\D_\delta}(\{\longleftrightarrow\}|\eta_R =\varsigma) \leq \sum_{i=1}^k \P^\emptyset_K\Ll(\Lambda_{\lambda_\eps}(x_i) \stackrel{\omega\cap\tilde A}{\longleftrightarrow}\Ll(\partial_{1}\tilde A\Rr)_{2r}\Rr),\quad\forall r<10^{-2}\lambda_\eps.
\end{align*}
Applying~\ref{i.H_bdy_conn} to the disc $K$, we get
\begin{align*}
    \sup_{i\in\{1,\dots,k\}}\P^\emptyset_K\Ll(\Lambda_{\lambda_\eps}(x_i) \stackrel{\omega\cap\tilde A}{\longleftrightarrow}\Ll(\partial_{1}\tilde A\Rr)_{2r}\Rr) = o^\eps_{r,\delta}(1),
\end{align*}
for some error term $o^\eps_{r,\eps}(1)$ that depends on $\eps$ and satisfies $\lim_{r\to0}\limsup_{\delta\to0} o^\eps_{r,\delta}(1)=0$ for each fixed $\eps$.
The above two displays yield
\begin{align}\label{e.P(...|eta=sigma)<No}
    \P^\emptyset_{\D_\delta}(\{\longleftrightarrow\}|\eta_R =\varsigma)  \leq N_\eps o^\eps_{r,\delta}(1),\qquad\forall r<10^{-2}\lambda_\eps.
\end{align}
Note that this bound is independent of $R$ or $\varsigma\in S_R$.

Recall that~\eqref{e.P(...|eta=sigma)<No} holds under the condition~\eqref{e.RcapA=empty}. Due to~\eqref{e.RcapAneqempty}, this bound trivially holds in the other case. Therefore, we apply the bound~\eqref{e.P(...|eta=sigma)<No} to~\eqref{e.P(longrightleftarrow)} and use~\eqref{e.cdots<<->} to conclude that
\begin{align*}
    \P^\emptyset_{\D_\delta}\Ll(\cdots\Rr) \leq N_\eps o^\eps_{r,\delta}(1) +o_{\eps,\delta}(1),\qquad\forall \delta<\delta(\eps,r),\ \forall r<10^{-2}\lambda_\eps.
\end{align*}
Sending $\delta\to0$, $r\to0$, and $\eps\to0$ in order, we get the desired result.
\end{proof}

\section{Topologies}\label{s.topologies}

In the first two subsections, we introduce the annulus-crossing space and recall the topology on loop collections. The former has a topology similar to the quad-crossing topology first introduced in \cite{SchrammSmirnov}. In the third subsection, we study the properties of a natural map from loop collections to annulus-crossings.

\subsection{Annulus-crossing topology}

We closely follow \cite[Section~3]{SchrammSmirnov}.

\subsubsection{Space of annuli}
Recall the definition of topological annuli in Section~\ref{s.def_topo_obj}.
Slightly abusing the notation, we denote by $A$ the equivalence class of $A$ modulo any homeomorphism $\phi$ from $\mathbb{S}^1\times [0,1]$ onto itself that maps $\mathbb{S}^1\times \{i\}$ to $A(\mathbb{S}^1\times \{i\})$ and preserves the orientation on $\mathbb{S}^1\times \{i\}$ for both $i=0,1$. 
Let $\mathcal{A}_\D$ be the space of (equivalent classes of) the annuli in $\D$, equipped with metric
\begin{align}\label{e.d_(A_D)}
    d_{\mathcal{A}_\D}(A_1,A_2) =\inf_\phi \sup_{z\in\mathbb{S}^1\times\{0,1\}}|A_1(z) - A_2(\phi(z))|.
\end{align}

Recall the inner and outer boundaries of annuli in Section~\ref{s.def_topo_obj}. Due to the Annuli Theorem (see item~7 in Section~A of \cite[Chapter~2]{rolfsen2003knots}), given two simple loops $\gamma_0,\gamma_1$ with $\gamma_0$ inside $\gamma_1$ (see Section~\ref{s.loop_properties}), there is an annulus $A$ such that $\partial_i A =\gamma_i$ for $i=0,1$.

\begin{lemma}\label{l.d<r,in()_r}
If $d_{\mathcal{A}_\D}(A,A')\leq r$ for some $r>0$, then $\partial_i A'\subset (\partial_i A)_r$ for both $i=0,1$.
\end{lemma}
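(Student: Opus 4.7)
The plan is to unpack the definition of $d_{\mathcal{A}_\D}$ directly and chase a boundary point through an approximately optimal parametrization. The hypothesis $d_{\mathcal{A}_\D}(A,A')\leq r$ says that the infimum over admissible homeomorphisms $\phi$ of $\sup_{z\in\mathbb{S}^1\times\{0,1\}}|A(z)-A'(\phi(z))|$ is at most $r$. So, fixing any $\varepsilon>0$, I can select a boundary- and orientation-preserving homeomorphism $\phi_\varepsilon:\mathbb{S}^1\times[0,1]\to\mathbb{S}^1\times[0,1]$ with
\[
\sup_{z\in\mathbb{S}^1\times\{0,1\}}|A(z)-A'(\phi_\varepsilon(z))| \leq r+\varepsilon.
\]

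Fix $i\in\{0,1\}$ and take an arbitrary $y\in\partial_i A'$; then $y=A'(w)$ for some $w\in\mathbb{S}^1\times\{i\}$. The key structural fact used is that, by definition of the admissible class of $\phi$ in the metric~\eqref{e.d_(A_D)}, the restriction of $\phi_\varepsilon$ to $\mathbb{S}^1\times\{i\}$ is a self-homeomorphism of $\mathbb{S}^1\times\{i\}$; hence there exists $z_\varepsilon\in\mathbb{S}^1\times\{i\}$ with $\phi_\varepsilon(z_\varepsilon)=w$. This yields
\[
|A(z_\varepsilon)-y| = |A(z_\varepsilon)-A'(\phi_\varepsilon(z_\varepsilon))|\leq r+\varepsilon,
\]
and since $A(z_\varepsilon)\in\partial_i A$, we obtain $d(y,\partial_i A)\leq r+\varepsilon$.

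Letting $\varepsilon\to 0$ gives $d(y,\partial_i A)\leq r$, that is $y\in(\partial_i A)_r$. Since $y$ and $i$ were arbitrary, we conclude $\partial_i A'\subset(\partial_i A)_r$ for both $i=0,1$. There is no real obstacle here: the statement is essentially a tautological consequence of the metric~\eqref{e.d_(A_D)} and the convention that admissible $\phi$ preserve the boundary components, and the only care required is to handle the infimum by approximation and then close by letting $\varepsilon\to 0$ (which works because $(\partial_i A)_r$ is closed, being a sublevel set of a continuous distance function).
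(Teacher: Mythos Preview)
Your proof is correct and essentially the same as the paper's: both unpack the definition of $d_{\mathcal{A}_\D}$ and use that admissible $\phi$ restrict to self-homeomorphisms of each boundary circle. The only cosmetic difference is that the paper argues by contraposition (a point of $\partial_i A'$ at distance $>r$ from $\partial_i A$ forces $d_{\mathcal{A}_\D}(A,A')>r$), while you argue directly via an $\varepsilon$-approximation of the infimum and then let $\varepsilon\to 0$.
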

\begin{proof}
Suppose that $\partial_i A'\not\subset (\partial_i A)_r$ for some $i\in\{0,1\}$. Then there is $t\in \mathbb{S}^1$ such that $|A'(t,i)-A(s,i)|> r$ for all $s\in\mathbb{S}^1$. In particular, $|A'(t,i) - A(\phi(t,i))|> r$ for all admissible $\phi$. This implies that $d_{\mathcal{A}_\D}(A,A')\geq r$.
\end{proof}

\subsubsection{Orders on annuli}\label{s.orders_on_annuli}
To define the topology, we need a partial order defined in terms of crossing annuli. It is also useful to consider the dual of crossing: separation.
Hence, we define:
\begin{itemize}
    \item A \textbf{crossing} of $A$ is a compact connected subset of $A(\mathbb{S}^1\times[0,1])$ that intersects both $\partial_0 A$ and $\partial_1 A$;
    \item A \textbf{separation} of $A$ is a compact connected subset of $A(\mathbb{S}^1\times[0,1])$ whose complement in $\R^2$ contains $\partial_0 A$ and $\partial_1 A$ in different open connected components. 
\end{itemize}
Accordingly, a subset $R$ of $\R^2$ is said to \textbf{cross} (resp.\ \textbf{separate}) $A$ if $R$ contains a crossing (resp.\ separation) of $A$.
Note that the event $\circledcirc_A^S$ in~\eqref{e.circledcirc_A^s} is equivalent to that $S$ crosses $A$.

Then, we define the corresponding partial orders:
\begin{itemize}
    \item $A_1\leq_\cro A_2$ if every crossing of $A_2$ contains a crossing of $A_1$;
    \item $A_1\leq_\sep A_2$ if every separation of $A_2$ contains a separation of $A_1$.
\end{itemize}
The annulus-crossing order is defined analogously to the quad-crossing order in \cite{SchrammSmirnov}. As illustrated in \cite[Figure~2.1]{garban2013pivotal}, the quad-crossing order is not given by set inclusion. In this aspect, the annuli setting is much simpler.

A loop $l$ is said to \textbf{circulate} an annulus $A$ if $\partial_0 A$ is inside $l$ and $l$ is inside $\partial_1 A$. Note that $l$ separates $A$ if and only if $l$ circulates $A$ and does not intersect $\partial A$. An annulus $A'$ is said to \textbf{circulate} another annulus $A$ if $\partial_i A'$ circulate $A$ as a loop for both $i=0,1$.

\begin{lemma}\label{l.ineq_sep}
Let $A_1,A_2$ be two annuli. Then, 
\begin{align*}
    A_2\leq_\sep A_1 \quad \Longleftrightarrow \quad  \text{$A_1$ circulates $A_2$} \quad  \Longleftrightarrow\quad   A_1\leq_\cro A_2 
\end{align*}
\end{lemma}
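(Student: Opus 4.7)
My plan is to prove the two equivalences separately: $A_1$ circulates $A_2 \Leftrightarrow A_2 \leq_\sep A_1$ and $A_1$ circulates $A_2 \Leftrightarrow A_1 \leq_\cro A_2$. Throughout I exploit the observation that the circulation hypothesis amounts to $\partial_0 A_2$ lying in the closed disc bounded by $\partial_0 A_1$ and $\partial_1 A_1$ lying in the closed disc bounded by $\partial_1 A_2$; this in particular forces $A_1 \subseteq A_2$, with the open annular interior of $A_1$ contained in the open annular interior of $A_2$.

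For the separation equivalence, the forward direction is direct: if $A_1$ circulates $A_2$, any separation $S$ of $A_1$ is a compact connected subset of $A_1 \setminus \partial A_1 \subseteq A_2 \setminus \partial A_2$, hence disjoint from $\partial A_2$; moreover the component of $\R^2 \setminus S$ containing $\partial_0 A_1$ also contains the open disc inside $\partial_0 A_1$ (since $S$ meets neither) and hence contains $\partial_0 A_2$, and symmetrically for $\partial_1$, so $S$ itself is a separation of $A_2$. For the converse, I fix a sequence of piecewise smooth simple loops $(\gamma_n)$ in $A_1 \setminus \partial A_1$, each separating $\partial_0 A_1$ from $\partial_1 A_1$, with $\gamma_n \to \partial_0 A_1$ in Hausdorff distance from outside. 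Each $\gamma_n$ is a simple-loop separation of $A_1$; any separation of $A_2$ contained in $\gamma_n$ must equal $\gamma_n$, since a proper compact connected subarc of a simple loop has connected complement in $\R^2$ and thus separates no two points. Hence $\gamma_n \subseteq A_2$ and $\partial_0 A_2$ lies in the bounded component of $\R^2 \setminus \gamma_n$ (a short check using that $\partial_0 A_2$ is inside $\partial_1 A_2$ rules out the opposite possibility). A point $p \in \partial_0 A_2$ outside the closed disc bounded by $\partial_0 A_1$ would then lie outside $\gamma_n$ for $n$ large enough, contradicting this. The symmetric argument with loops approaching $\partial_1 A_1$ from inside yields $\partial_1 A_1$ inside $\partial_1 A_2$.

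For the crossing equivalence, the forward direction uses a cut-wire argument: given a crossing $\gamma$ of $A_2$, the connected set $\gamma$ meets both the closed disc bounded by $\partial_0 A_1$ (since it meets $\partial_0 A_2$) and the closed complement of the disc bounded by $\partial_1 A_1$ (since it meets $\partial_1 A_2$); a standard boundary-bumping theorem then produces a connected component of $\gamma \cap A_1$ meeting both $\partial_0 A_1$ and $\partial_1 A_1$, which is a crossing of $A_1$ contained in $\gamma$. The converse is by contraposition: if circulation fails, say $\partial_0 A_2$ contains a point outside the closed disc bounded by $\partial_0 A_1$, then a case analysis on where the offending point sits (in the open annulus of $A_1$, on $\partial_1 A_1$, or outside $\partial_1 A_1$) lets me construct an arc crossing $\gamma$ of $A_2$ disjoint from the open interior $A_1 \setminus \partial A_1$. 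Its intersection with $A_1$ then lies in the disjoint union $\partial_0 A_1 \cup \partial_1 A_1$, and no connected subset of two disjoint simple loops meets both components, so this crossing of $A_2$ contains no crossing of $A_1$.

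The main obstacles I anticipate are the boundary-bumping step in the forward direction of the crossing equivalence, which requires the classical continuum-theoretic fact that a continuum joining the two complementary sides of a nested pair of Jordan curves contains a subcontinuum lying between them and touching both (invoked via a Janiszewski- or Kuratowski-type cut-wire theorem), and the explicit construction of the avoiding arc in the converse, where the case analysis on the location of the offending boundary point is the most delicate bookkeeping.
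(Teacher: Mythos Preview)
Your argument for the separation equivalence is correct, though it differs from the paper's: where you take a sequence of simple loops $\gamma_n \to \partial_i A_1$ and pass to the limit, the paper simply exhibits a single loop $l$ close to the offending $\partial_i A_1$ that separates $A_1$ but lies partly outside $A_2$ and hence cannot separate $A_2$. Your limiting argument works but is more elaborate than needed. For the forward crossing direction you are right to flag the boundary-bumping (cut-wire) step; the paper calls this direction ``obvious'' but your identification of the underlying continuum-theoretic input is honest.

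There is, however, a genuine gap in your converse to the crossing equivalence. You claim that whenever circulation fails one can produce a crossing $\gamma$ of $A_2$ \emph{disjoint from the open interior} $A_1\setminus\partial A_1$. This is too strong. Take $\partial_0 A_1=\{|z|=1\}$, $\partial_1 A_1=\{|z|=10\}$, and let $\partial_0 A_2$ be a tiny circle around the point $5$ with $\partial_1 A_2=\{|z|=100\}$. Circulation fails (indeed $\partial_0 A_2$ sits entirely in the open annulus of $A_1$), but \emph{every} crossing of $A_2$ must start on $\partial_0 A_2$, which lies wholly inside the open interior of $A_1$; no crossing of $A_2$ can avoid that interior. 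Your conclusion that $\gamma\cap A_1\subseteq \partial_0 A_1\cup\partial_1 A_1$ therefore cannot be reached this way. The strategy can be repaired by aiming for the weaker property that $\gamma$ misses one of $\partial_0 A_1$ or $\partial_1 A_1$ (in the example, the radial segment from $5$ to $100$ misses $\partial_0 A_1$), but making that work uniformly requires a more careful case analysis than you sketch.

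The paper sidesteps this entirely by a short duality argument: assuming $A_1\leq_\cro A_2$, any separation $s_1$ of $A_1$ meets every crossing of $A_1$, hence every crossing of $A_2$, and therefore must separate $\partial_0 A_2$ from $\partial_1 A_2$; this gives $A_2\leq_\sep A_1$ directly and then the first equivalence closes the loop. That route avoids constructing any explicit crossing and is what you should use instead.
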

\begin{proof}
In the first equivalence relation, the deduction direction is obvious. To see the other direction, we suppose that $A_1$ does not circulate $A_2$. In this case, $\partial_i A_1$ is not contained in $A_2$ for some $i=1,2$. Then, we find a simple loop $l$ circulating $A_1$ and close to $\partial_i A_1$ so that $l\not\subset A_2$. Hence, $l$ separates $A_1$ but not $A_2$.
So $A_2\leq_\sep A_1$ does not hold, proving the first equivalence relation.

The implication direction in the second equivalence relation is obvious. To prove the other direction, it suffices to show that $A_1\leq_\cro A_2  $ implies $A_2\leq_\sep A_1$. Let $s_1$ be any separation of $A_1$, then $s_1$ intersects every crossing of $A_1$.
Since every crossing of $A_2$ contains a crossing of $A_1$, $s_1$ intersects every crossing of $A_2$. 
Then $\partial_0 A_2$ and $\partial_1 A_2$ are in different components of $\R^2\setminus s_1$, because otherwise there are $x_i\in \partial_iA_2$ for both $i=0,1$, in the same component of $\R^2\setminus s_1$ which allows us to find a crossing of $A_2$ that does not intersect $s_1$. Therefore, $s_1$ separates $A_2$ and thus $A_2\leq_\sep A_1$.
\end{proof}

\subsubsection{Topology on the collection of hereditary sets}

We describe a topology on a suitable subcollection of $\{\mathcal{S}:\mathcal{S}\subset \mathcal{A}_\D\}$. We write $A_1<_\cro A_2$ if there are open neighborhoods $\mathcal{U}_1$ of $A_1$ and $\mathcal{U}_2$ of $A_2$ in $\mathcal{A}_\D$ such that $A'_1\leq_\cro A'_2$ for every $A'_1\in \mathcal{U}_1$ and every $A'_2\in\mathcal{U}_2$.

A subset $\mathcal{S}\subset \mathcal{A}_\D$ is called \textbf{hereditary} (as in \cite{garban2013pivotal}; or \textbf{lower} as in \cite{SchrammSmirnov}) if
\begin{align}\label{e.hereditary}
    A\in \mathcal{S},\ A'\in \mathcal{A}_\D,\ A'<_\cro A \quad \Longrightarrow \quad A'\in \mathcal{S}.
\end{align}
We define $\mathscr{H}_{\mathcal{A}_\D} = \{\mathcal{S}\subset\mathcal{A}_\D:\text{$\mathcal{S}$ is hereditary}\}$.

Then, we describe a topology $\mathbf{T}_{\mathcal{A}_\D}$ on $\mathscr{H}_{\mathcal{A}_\D}$ by specifying a subbase. For every $A\in\mathcal{A}_\D$ and every open subset $\mathcal{U}\subset \mathcal{A}_\D$, we define
\begin{align}\label{e.def_open_sets}
    \circledcirc_A = \{\mathcal{S}\in \mathscr{H}_{\mathcal{A}_\D}:A\in\mathcal{S}\},\qquad \mathscr{V}_\mathcal{U} = \{\mathcal{S}\in \mathscr{H}_{\mathcal{A}_\D}:\mathcal{S}\cap \mathcal{U}\neq\emptyset\}.
\end{align}
Let $\mathbf{T}_{\mathcal{A}_\D}$ be the minimal topology containing every such $(\circledcirc_A)^\complement$ and $\mathscr{V}_\mathcal{U}$.

Note that a percolation configuration $\kappa$ on $\D_\delta$ is naturally associated with a $\mathcal{S}$ in $\mathscr{H}_{\mathcal{A}_\D}$: the set of annuli crossed by the union of open edges of $\kappa$ when viewed as a subset of $\R^2$. 
Heuristically, $\circledcirc_A$ corresponds to the set of $\kappa$ crossing $A$; and for a neighborhood $\mathcal{U}$ of $A$, $\mathcal{S}\in \mathscr{V}_{\mathcal{U}}$ corresponds to the event that $\kappa$ crosses an annulus close to $A$.

Recall the definition of $\mathcal{A}^k_\D$ and $\mathcal{A}_{\D}^{\mathrm{dya}}$ in Section~\ref{s.dyadic_annuli}.
It is easy to check that $\mathcal{A}_\D$ satisfies the conditions in \cite[Theorem~3.19]{SchrammSmirnov}, which yields the following lemma. 

\begin{lemma}\label{l.basics_topology}
The topological space $(\mathscr{H}_{\mathcal{A}_\D},\mathbf{T}_{\mathcal{A}_\D})$ is Hausdorff, compact, and metrizable. Moreover, the $\sigma$-algebra generated by $\{\circledcirc_{A}\}_{A\in \mathcal{A}_{\D}^{\mathrm{dya}}}$ is the Borel $\sigma$-algebra of $(\mathscr{H}_{\mathcal{A}_\D},\mathbf{T}_{\mathcal{A}_\D})$.
\end{lemma}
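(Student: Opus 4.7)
The plan is to reduce everything to \cite[Theorem~3.19]{SchrammSmirnov}. That theorem is phrased abstractly for a separable metric space endowed with a partial order satisfying certain topological compatibility conditions, and it concludes exactly the three topological properties and the Borel $\sigma$-algebra identification we want. So the proof will consist of recasting our triple $(\mathcal{A}_\D, d_{\mathcal{A}_\D}, \leq_\cro)$ in that framework and verifying the hypotheses one by one.

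First I would check the metric/topological inputs: $(\mathcal{A}_\D, d_{\mathcal{A}_\D})$ is a separable metric space, and $\mathcal{A}_\D^{\mathrm{dya}} = \cup_k \mathcal{A}_\D^k$ is a countable dense subset. Density follows from the fact that any annulus in $\D$ can be approximated in $d_{\mathcal{A}_\D}$ by one whose boundary components lie on a sufficiently fine dyadic grid (parametrize $\partial_0 A$ and $\partial_1 A$, replace each by a nearby dyadic simple loop, and invoke the Annuli Theorem to realize this as a genuine annulus). Next I would verify the order-theoretic inputs. Reflexivity and transitivity of $\leq_\cro$ are immediate from the definition, while Lemma~\ref{l.ineq_sep} gives the geometric characterization in terms of circulation, from which antisymmetry follows. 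The essential compatibility condition needed by the abstract theorem is the \emph{approximation property} for $<_\cro$: for every $A \in \mathcal{A}_\D$ and every open neighborhood $\mathcal{U}$ of $A$ in $\mathcal{A}_\D$ there exist $A^-, A^+ \in \mathcal{U}$ with $A^- <_\cro A <_\cro A^+$, and whenever $A_1 <_\cro A_2$ one can interpolate a dyadic $A_3$ with $A_1 <_\cro A_3 <_\cro A_2$. I would construct $A^{\pm}$ by pushing $\partial_0 A$ slightly inward and $\partial_1 A$ slightly outward (respectively, inward/outward in the opposite sense) using the Annuli Theorem, and use Lemma~\ref{l.d<r,in()_r} together with Lemma~\ref{l.ineq_sep} to make sure the resulting annuli lie in $\mathcal{U}$ and sit strictly on either side of $A$ in the order; interpolation by a dyadic annulus is then obtained by choosing a sufficiently fine grid inside the gap.

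Granting these hypotheses, \cite[Theorem~3.19]{SchrammSmirnov} immediately yields that $(\mathscr{H}_{\mathcal{A}_\D}, \mathbf{T}_{\mathcal{A}_\D})$ is Hausdorff, compact, and metrizable. For the Borel $\sigma$-algebra statement, the approximation property shows that for any $A \in \mathcal{A}_\D$ one has
\begin{equation*}
\circledcirc_A = \bigcap_{n} \bigcup_{\substack{A' \in \mathcal{A}_\D^{\mathrm{dya}} \\ A <_\cro A',\, d_{\mathcal{A}_\D}(A,A') < 1/n}} \circledcirc_{A'},
\end{equation*}
which is a countable combination of the generators; conversely the subbase elements $\mathscr{V}_\mathcal{U}$ reduce to countable unions of $(\circledcirc_{A'})^\complement$-type sets indexed by dyadic $A'$, using separability of $\mathcal{A}_\D$. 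Together these show that the Borel $\sigma$-algebra is generated by $\{\circledcirc_A\}_{A\in\mathcal{A}_\D^{\mathrm{dya}}}$.

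The main obstacle I anticipate is purely the verification of the strict approximation/interpolation property for $<_\cro$, since $<_\cro$ is defined via open neighborhoods rather than pointwise order and one must arrange that the perturbed annuli $A^\pm$ genuinely circulate (or are circulated by) every annulus in a small neighborhood of $A$. This is the only place where the geometry of $\mathcal{A}_\D$ enters nontrivially; once it is established, the remaining work is a routine transcription of the abstract arguments of \cite{SchrammSmirnov}.
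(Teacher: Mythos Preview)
Your approach matches the paper's exactly: the paper gives no proof beyond stating that $\mathcal{A}_\D$ satisfies the hypotheses of \cite[Theorem~3.19]{SchrammSmirnov}, which yields the lemma in full (including the $\sigma$-algebra identification). One minor caveat: in your displayed identity for $\circledcirc_A$ the order goes the wrong way---with $A <_\cro A'$ hereditariness gives only $\circledcirc_{A'}\subset\circledcirc_A$, hence the inclusion $\supset$, while the reverse inclusion fails for a general hereditary $\mathcal{S}$---but this step is redundant anyway, since the $\sigma$-algebra statement is already part of the conclusion of the cited theorem.
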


\subsubsection{Metric and neighborhoods}
Although $\mathscr{H}_{\mathcal{A}_\D}$ is metrizable, we do not know a useful explicit metric.
As a remedy, we describe the closedness in metric using open neighborhoods, following \cite{garban2013pivotal}.

For $A\in \mathcal{A}^k_\D$ for some $k\in\N$, we take $\mathcal{B}^k_A$ to be the open $d_{\mathcal{A}_\D}$-metric ball centered at $A$ with radius $2^{-k-10}$; and let $\bar A^k \in \mathcal{A}^{k+10}_\D$ be the smallest annulus in terms of $\leq_\cro$ in $\{A' \in \mathcal{A}_\D^{k+10}: A<_\cro A'\}$. For every $\mathcal{S}\in \mathscr{H}_{\mathcal{A}_\D}$, we define the open neighborhood
\begin{align}\label{e.O^k(omega)}
    \mathscr{O}^k(\mathcal{S}) = \Ll(\bigcap_{A\in \mathcal{A}^k_\D:\: A\not\in \mathcal{S}} \circledcirc_{\bar A^k}^\complement\Rr)\bigcap \Ll(\bigcap_{A\in\mathcal{A}^k_\D:\:A\in\mathcal{S}} \mathscr{V}_{\mathcal{B}^k_A}\Rr)
\end{align}
which is open because $\mathcal{A}^k_\D$ is finite.
If $\mathcal{S}' \in \mathscr{O}^k(\mathcal{S})$, then we have the following:
\begin{itemize}
    \item if $\mathcal{S}$ does not cross $A$ (i.e.\ $A\not\in\mathcal{S}$), then $\mathcal{S}'$ does not cross $\bar A^k$ (i.e.\ $\mathcal{S}'\in \circledcirc_{\bar A^k}^\complement$) which is slightly harder to cross than $A$;
    \item if $\mathcal{S}$ crosses $A$ (i.e.\ $A\in\mathcal{S}$), then there is $A'$ close to $A$ (i.e.\ $A'\in \mathcal{B}^k_A$) which is also crossed by $\mathcal{S}'$ (i.e.\ $\mathcal{S}'\in \mathscr{V}_{\mathcal{B}^k_A}$).
    
\end{itemize}
Loosely speaking, if $\mathcal{S}'\in\mathscr{O}^k(\mathcal{S})$ and $\mathcal{S}\in\mathscr{O}^k(\mathcal{S}')$, then $\mathcal{S}$ and $\mathcal{S}'$ cross similar annuli in~$\mathcal{A}^k_\D$.

We fix any metric $d_{\mathscr{H}_{\mathcal{A}_\D}}$ on $\mathscr{H}_{\mathcal{A}_\D}$ that generates $\mathbf{T}_{\mathcal{A}_\D}$.
The next lemma, which is a straightforward adaptation of \cite[Lemma~2.5]{garban2013pivotal}, describes the metric in terms of the open neighborhoods introduced above.
\begin{lemma}\label{l.metric_dya}
There is a function $\mathfrak{r}:\N\to [0,\infty)$ such that, for every $\mathcal{S},\mathcal{S}'\in\mathscr{H}_{\mathcal{A}_\D}$, if $d_{\mathscr{H}_{\mathcal{A}_\D}}(\mathcal{S},\mathcal{S}') \leq \mathfrak{r}(k)$ for some $k\in\N$, then $\mathcal{S}'\in\mathscr{O}^k(\mathcal{S})$ and $\mathcal{S}\in\mathscr{O}^k(\mathcal{S}')$.
\end{lemma}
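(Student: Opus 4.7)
The plan is a compactness-and-contradiction argument exploiting the fact that the map $\mathcal{S}\mapsto \mathscr{O}^k(\mathcal{S})$ takes only finitely many values, since it depends on $\mathcal{S}$ only through $\mathcal{S}\cap \mathcal{A}^k_\D$ and $\mathcal{A}^k_\D$ is finite. As a preliminary, I would verify that $\mathcal{S}\in \mathscr{O}^k(\mathcal{S})$ for every $\mathcal{S}\in\mathscr{H}_{\mathcal{A}_\D}$: for $A\in\mathcal{A}^k_\D$ with $A\in\mathcal{S}$ we trivially have $A\in \mathcal{S}\cap \mathcal{B}^k_A$, and for $A\in\mathcal{A}^k_\D$ with $A\notin\mathcal{S}$, the contrapositive of the hereditary property~\eqref{e.hereditary} combined with $A<_\cro \bar A^k$ forces $\bar A^k\notin\mathcal{S}$. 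Thus $\mathscr{O}^k(\mathcal{S})$ is an open neighborhood of $\mathcal{S}$, and the only issue is the uniformity of its radius as $\mathcal{S}$ varies.

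Suppose no such $\mathfrak{r}(k)$ exists. Then there are sequences $\mathcal{S}_n,\mathcal{S}'_n\in \mathscr{H}_{\mathcal{A}_\D}$ with $d_{\mathscr{H}_{\mathcal{A}_\D}}(\mathcal{S}_n,\mathcal{S}'_n)\to 0$ such that, after swapping roles and passing to a subsequence, $\mathcal{S}'_n\notin \mathscr{O}^k(\mathcal{S}_n)$ for all $n$. Since $\mathcal{A}^k_\D$ is finite, a further subsequence can be chosen on which the failure is always witnessed by the same annulus $A_*\in\mathcal{A}^k_\D$ in one of two scenarios: (I) $A_*\notin\mathcal{S}_n$ yet $\bar{A_*}^k\in\mathcal{S}'_n$, or (II) $A_*\in\mathcal{S}_n$ yet $\mathcal{S}'_n\cap \mathcal{B}^k_{A_*}=\emptyset$. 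By the compactness of $(\mathscr{H}_{\mathcal{A}_\D},\mathbf{T}_{\mathcal{A}_\D})$ from Lemma~\ref{l.basics_topology}, one further subsequence produces $\mathcal{S}_n\to \mathcal{S}_\infty$, and then $d_{\mathscr{H}_{\mathcal{A}_\D}}(\mathcal{S}_n,\mathcal{S}'_n)\to 0$ forces $\mathcal{S}'_n\to\mathcal{S}_\infty$ as well.

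Case (II) is straightforward: both $\circledcirc_{A_*}$ and $\mathscr{V}_{\mathcal{B}^k_{A_*}}^\complement$ are closed, so the conditions pass to the limit, giving $A_*\in\mathcal{S}_\infty$ while $\mathcal{S}_\infty\cap\mathcal{B}^k_{A_*}=\emptyset$, which contradicts $A_*\in\mathcal{B}^k_{A_*}$. Case (I) is where the main obstacle lies: $\circledcirc_{A_*}^\complement$ is only open, so $A_*\notin\mathcal{S}_n$ does not pass to the limit, while closedness of $\circledcirc_{\bar{A_*}^k}$ combined with hereditary gives $\bar{A_*}^k\in\mathcal{S}_\infty$ and hence $A_*\in\mathcal{S}_\infty$. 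To extract a contradiction I would interpolate an intermediate annulus: using the openness of $<_\cro$ in $\mathcal{A}_\D\times \mathcal{A}_\D$ together with the strict relation $A_*<_\cro \bar{A_*}^k$, produce $\hat A$ with $A_*<_\cro \hat A<_\cro \bar{A_*}^k$ and an open neighborhood $\mathcal{U}\ni \hat A$ in $\mathcal{A}_\D$ satisfying $A_*<_\cro \hat A'<_\cro \bar{A_*}^k$ for every $\hat A'\in\mathcal{U}$. The contrapositive of hereditary applied to each $\mathcal{S}_n$ then gives $\hat A'\notin\mathcal{S}_n$ for all $\hat A'\in\mathcal{U}$, i.e.\ $\mathcal{S}_n\in \mathscr{V}_{\mathcal{U}}^\complement$, which is closed, so the limit satisfies $\mathcal{S}_\infty\cap\mathcal{U}=\emptyset$. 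On the other hand, hereditary applied to $\mathcal{S}_\infty$ with $\bar{A_*}^k\in\mathcal{S}_\infty$ and $\hat A'<_\cro \bar{A_*}^k$ forces $\hat A'\in\mathcal{S}_\infty$ for every $\hat A'\in\mathcal{U}$, producing the desired contradiction.

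The delicate step is the construction of $\hat A$ and $\mathcal{U}$. It should follow from an Annuli-Theorem-style interpolation (in the sense of the theorem cited near~\eqref{e.d_(A_D)}): choose loops $\gamma_0,\gamma_1$ strictly nested between the corresponding boundaries of $A_*$ and $\bar{A_*}^k$, take $\hat A$ to be the annulus with $\partial_i \hat A=\gamma_i$, and use stability of the nested-inside-each-other relation for simple loops under small perturbations to obtain $\mathcal{U}$ on which the two strict orderings persist. This geometric step, rather than the abstract hereditary framework, is where the particular structure of $\mathcal{A}_\D$ enters, and is where the principal technical effort is expected.
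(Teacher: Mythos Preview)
Your compactness-and-contradiction argument is correct, and the paper does not supply its own proof here, deferring instead to \cite[Lemma~2.5]{garban2013pivotal}, whose argument is of the same general shape.

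One simplification is worth noting: in Case~(I) the intermediate annulus $\hat A$ is unnecessary. The definition of $A_*<_\cro\bar{A_*}^k$ already provides open neighborhoods $\mathcal{U}_1\ni A_*$ and $\mathcal{U}_2\ni\bar{A_*}^k$ with $A'_1\leq_\cro A'_2$ for all $A'_1\in\mathcal{U}_1$, $A'_2\in\mathcal{U}_2$. Since $\mathcal{U}_2$ is itself an open neighborhood of each of its points, the same pair $(\mathcal{U}_1,\mathcal{U}_2)$ witnesses $A_*<_\cro A'$ for every $A'\in\mathcal{U}_2$. The contrapositive of~\eqref{e.hereditary} then gives $\mathcal{S}_n\cap\mathcal{U}_2=\emptyset$ for all $n$, i.e.\ $\mathcal{S}_n\in\mathscr{V}_{\mathcal{U}_2}^\complement$, which is closed, so $\mathcal{S}_\infty\cap\mathcal{U}_2=\emptyset$. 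But $\bar{A_*}^k\in\mathcal{S}_\infty\cap\mathcal{U}_2$, a contradiction. Thus the step you flagged as the ``principal technical effort'' --- the Annuli-Theorem interpolation to construct $\hat A$ and its neighborhood --- can be dropped entirely, and the whole proof becomes soft.
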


\subsection{Space of loop collections}

Recall in Section~\ref{s.loop_properties} that we identify loops up to orientation-preserving reparametrizations $\phi$.

\subsubsection{Space of loops}
We measure the distance between two loops $l$ and $l'$ by
\begin{align}\label{e.d_mathcal_L}
    d_{\mathcal{L}_\D}\Ll(l,l'\Rr) = \inf_\phi \sup_{t\in \S^1} \Ll|l(t)- l'(\phi(t))\Rr|.
\end{align}
We denote by $\mathcal{L}_\D$ the completion of the collection of simple loops in $\D$ under this metric. Hence, $\mathcal{L}_\D$ includes weakly simple loops (defined in Section~\ref{s.loop_properties}) and trivial loops reduced to points. We call $\mathcal{L}_\D$ the \textbf{space of weakly simple loops in $\D$}.
\begin{lemma}\label{l.d(l,l')<delta}
If $d_{\mathcal{L}_\D}(l,l')\leq \delta$ for some $\delta>0$, then $l\subset (l')_\delta$.
\end{lemma}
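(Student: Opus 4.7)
The plan is to unfold the definitions and take a limit. Recall that the paper defines $(S)_r = \{y\in\R^2:d(y,S)\le r\}$, where $d$ is the Euclidean distance, and the loop metric is
\begin{align*}
d_{\mathcal{L}_\D}(l,l') = \inf_\phi \sup_{t\in\S^1}|l(t)-l'(\phi(t))|,
\end{align*}
with the infimum over orientation-preserving homeomorphisms $\phi:\S^1\to\S^1$. Throughout, loops are identified with their images in $\R^2$ (as set out in Section~\ref{s.loop_subset_R^2}), so ``$l\subset (l')_\delta$'' means every point in the image of $l$ is within Euclidean distance $\delta$ of the image of $l'$.

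First, fix an arbitrary $\eps>0$. By definition of the infimum, there exists an orientation-preserving homeomorphism $\phi_\eps$ of $\S^1$ such that
\begin{align*}
\sup_{t\in\S^1}|l(t)-l'(\phi_\eps(t))|\le \delta+\eps.
\end{align*}
Next, let $p$ be an arbitrary point in the image of $l$, and choose $t_0\in\S^1$ with $l(t_0)=p$. Setting $q_\eps=l'(\phi_\eps(t_0))$, which lies in the image of $l'$, we obtain $|p-q_\eps|\le\delta+\eps$, hence $d(p,l')\le\delta+\eps$.

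Finally, since $\eps>0$ was arbitrary, $d(p,l')\le\delta$, so $p\in(l')_\delta$ by the definition of the closed neighborhood. As $p$ was an arbitrary point of the image of $l$, this gives $l\subset(l')_\delta$ as required. The only subtlety to mention is that the infimum in the definition of $d_{\mathcal{L}_\D}$ may not be attained, which is why we pass through an $\eps$-argument and use the closedness of $(l')_\delta$ (equivalently, the lower semi-continuity of $d(\cdot,l')$ in $\eps$); there is no serious obstacle here.
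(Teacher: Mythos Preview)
Your proof is correct and essentially the same as the paper's: both are one-line unfoldings of the definitions. The only cosmetic difference is that the paper argues by contrapositive (if some $p=l(t)$ has $d(p,l')>\delta$ then every $\phi$ gives $|l(t)-l'(\phi(t))|>\delta$, forcing $d_{\mathcal{L}_\D}(l,l')>\delta$), whereas you argue directly with an $\eps$-approximation to the infimum; neither approach offers any real advantage over the other.
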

\begin{proof}
Suppose that $l\not\subset (l')_\delta$. Then, there is $t$ such that $|l(t)-l'(t')|>\delta$ for every $t'$. This implies that $|l(t)-l'(\phi(t))| >\delta$ for every reparametrization $\phi$ that preserves the orientation. Hence, we have $d_{\mathcal{L}_\D}(l,l')\geq \delta$.
\end{proof}

\subsubsection{Metric on loop collections}\label{s.metric_on_loop_coll}
Set $\mathscr{L}_\D$ to be the \textbf{space of loop collections} consisting of $\{l_i\}_{i\in I}$ with $I\subset \N$ satisfying
\begin{itemize}
    \item $l_i\in \mathcal{L}_\D$ and $l_i$ is not a point for every $i\in I$;
    \item for every $\eps>0$, $\{i\in I:\diam(l_i)\geq \eps\}$ is finite, where $\diam$ is the Euclidean diameter.
\end{itemize}
Loops in $\{l_i\}_{i\in I}$ can appear with multiplicity and $\mathscr{L}_\D$ includes the empty collection.
This definition is the same as in \cite[Section~2.7]{benoist2019scaling}.

For two index sets $I, J$, we call $\pi\subset I\times J$ a \textbf{matching} if for each $i\in I$ there is at most one $j\in J$ such that $(i,j)\in \pi$ and vice versa. Note that $\pi$ can be empty. Given a matching $\pi$, we set $I^\pi = \{ i\in I: \nexists j\in J,\, (i,j)\in \pi\}$ to be the set of unmatched indices in $I$ and set $J^\pi$ similarly. 
For $\{l_i\}_{i\in I}$, $\{l'_j\}_{j\in J}$ and a matching $\pi\subset I\times J$, we define
\begin{align*}
    d_\pi \Ll(\{l_i\}_{i\in I},\, \{l'_j\}_{j\in J}\Rr) = \max\Ll(\sup_{(i,j)\in\pi} d_{\mathcal{L}_\D}(l_i,l'_j),\, \sup_{i\in I^\pi} \frac{1}{2}\diam(l_i) ,\, \sup_{j\in J^\pi} \frac{1}{2}\diam(l'_j)\Rr)
\end{align*}
We equip $\mathscr{L}_\D$ with the following metric:
\begin{align}\label{e.d_mathscr_L}
    d_{\mathscr{L}_\D} \Ll(\{l_i\}_{i\in I},\, \{l'_j\}_{j\in J}\Rr) = \inf_{\pi} d_\pi \Ll(\{l_i\}_{i\in I},\, \{l'_j\}_{j\in J}\Rr)
\end{align}
where the infimum is taking over all matching $\pi\subset I\times J$.
The definition of $d_{\mathscr{L}_\D}$ is similar to the distance in \cite{benoist2019scaling}, whose results on the convergence of Ising loops are used later. One difference is that we scale the diameters by $\frac{1}{2}$, making the following verification easier.
\begin{lemma}
The function $d_{\mathscr{L}_\D}:\mathscr{L}_\D\times \mathscr{L}_\D \to [0,\infty)$ is a metric.
\end{lemma}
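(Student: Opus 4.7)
The plan is to verify the four metric axioms: non-negativity, identity of indiscernibles, symmetry, and the triangle inequality. Non-negativity is immediate since $d_\pi\ge 0$. Symmetry is also immediate since, given any matching $\pi\subset I\times J$, its reverse $\pi^{\mathrm{op}}=\{(j,i):(i,j)\in\pi\}\subset J\times I$ satisfies $d_\pi(L,L')=d_{\pi^{\mathrm{op}}}(L',L)$, where $L=\{l_i\}_{i\in I}$ and $L'=\{l'_j\}_{j\in J}$.

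For the identity of indiscernibles, taking $\pi$ to be the diagonal matching shows $d_{\mathscr{L}_\D}(L,L)=0$. Conversely, assume $d_{\mathscr{L}_\D}(L,L')=0$ and pick a sequence of matchings $\pi_n$ with $d_{\pi_n}(L,L')\to 0$. Fix $\eps>0$ and let $I_\eps=\{i\in I:\diam(l_i)\geq \eps\}$, which is finite by definition of $\mathscr{L}_\D$, and similarly for $J_\eps$. For $n$ large enough that $d_{\pi_n}(L,L')<\eps/2$, every $i\in I_\eps$ must be matched by $\pi_n$ (else $\frac12\diam(l_i)\ge\eps/2$ would appear in $d_{\pi_n}$), and the matched partner $j$ must lie in $J_{\eps/2}$ by Lemma above (since $d_{\mathcal{L}_\D}(l_i,l'_j)<\eps/2$ forces $l_i\subset (l'_j)_{\eps/2}$, hence $\diam(l'_j)\geq \diam(l_i)-\eps\ge 0$ — use instead the bound $\diam(l_i)\le\diam(l'_j)+2d_{\mathcal{L}_\D}(l_i,l'_j)$ so $\diam(l'_j)\geq\eps-2(\eps/2)$; we simply keep $j\in J$). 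Passing to a subsequence on the finite set $I_\eps$, a diagonal argument yields a bijection between the loops of diameter $\ge\eps$ (for arbitrary $\eps$) with pairwise $d_{\mathcal{L}_\D}$-distance zero. Since $d_{\mathcal{L}_\D}$ is a metric on $\mathcal{L}_\D$, the matched loops coincide, and letting $\eps\to 0$ identifies $L$ and $L'$ as loop collections.

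The main work, and the only nontrivial step, is the triangle inequality. Given $L,L',L''$ indexed by $I,J,K$, fix matchings $\pi_1\subset I\times J$ and $\pi_2\subset J\times K$ and define the composition $\pi=\{(i,k):\exists j,\,(i,j)\in\pi_1\text{ and }(j,k)\in\pi_2\}\subset I\times K$, which is clearly a matching. For $(i,k)\in\pi$ with intermediate $j$, the triangle inequality for $d_{\mathcal{L}_\D}$ gives $d_{\mathcal{L}_\D}(l_i,l''_k)\le d_{\pi_1}(L,L')+d_{\pi_2}(L',L'')$. For $i\in I^\pi$, either $i\in I^{\pi_1}$, in which case $\tfrac12\diam(l_i)\le d_{\pi_1}(L,L')$ directly; or there exists $j$ with $(i,j)\in\pi_1$ but $j\in J^{\pi_2}$. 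In the latter case Lemma above gives $l_i\subset(l'_j)_{d_{\pi_1}(L,L')}$, so
\begin{align*}
\tfrac12\diam(l_i)\le \tfrac12\diam(l'_j)+d_{\pi_1}(L,L')\le d_{\pi_2}(L',L'')+d_{\pi_1}(L,L'),
\end{align*}
where the crucial factor $\tfrac12$ in the definition of $d_\pi$ is exactly what absorbs the $2d_{\pi_1}$ coming from the diameter inflation. The case $k\in K^\pi$ is symmetric. Combining these bounds yields $d_\pi(L,L'')\le d_{\pi_1}(L,L')+d_{\pi_2}(L',L'')$, and taking the infimum over $\pi_1$ and $\pi_2$ gives $d_{\mathscr{L}_\D}(L,L'')\le d_{\mathscr{L}_\D}(L,L')+d_{\mathscr{L}_\D}(L',L'')$. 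The only subtle point throughout is the handling of loops that are unmatched in the composed matching because their intermediate partner was itself unmatched: the $\tfrac12$ in the definition is carefully calibrated to make this work.
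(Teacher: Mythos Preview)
Your proof is essentially correct and follows the same route as the paper. The triangle inequality argument is identical: compose the two matchings, bound matched pairs by the triangle inequality in $\mathcal{L}_\D$, and for an index $i\in I^\pi$ that is unmatched only because its intermediate partner $j\in J^{\pi_2}$ was dropped, use $\diam(l_i)\le \diam(l'_j)+2d_{\mathcal{L}_\D}(l_i,l'_j)$ together with the $\tfrac12$ in the definition. The paper does exactly this.

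For the identity of indiscernibles the paper argues slightly differently: instead of a subsequence/diagonal argument over $I_\eps$, it fixes a single $l_i\in L$, shows the candidate set $J_0=\{j: d_{\mathcal{L}_\D}(l_i,l'_j)<\tfrac14\diam(l_i)\}$ is finite and nonempty, and then lets $\eps\to 0$ to pin down an exact match in $J_0$. Your version works too, but the threshold needs repair: with $d_{\pi_n}(L,L')<\eps/2$ you only get $\diam(l'_j)\geq \eps-2\cdot\tfrac{\eps}{2}=0$, as you yourself note, which does not place $j$ in a finite set and so the subsequence step is not justified. Taking instead $d_{\pi_n}(L,L')<\eps/4$ gives $\diam(l'_j)\geq \eps/2$, so $j\in J_{\eps/2}$, and then your diagonal argument goes through cleanly. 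With that fix the two proofs are equivalent.
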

\begin{proof}
We write $L= \{l_i\}_{i\in I}$ and $L' = \{l'_j\}_{j\in J}$. Clearly, $d_{\mathscr{L}_\D}(L,L)=0$. Now assuming $d_{\mathscr{L}_\D}(L,L')=0$, we fix any $l_i\in L$. Then we have that $J_0=\{j\in J:d_{\mathcal{L}_\D}(l_i,l'_j)<\frac{1}{4}\diam(l_i)\}$ is not empty, for otherwise any matching $\pi$ would satisfy $d_\pi(L,L')\geq \frac{1}{4}\diam(l_i)$. Using the inequality
\begin{align}\label{e.diam<2d+diam}
    \diam(l) \leq 2 d_{\mathcal{L}_\D}(l,l') + \diam(l'),\quad\forall l,l'\in \mathcal{L}_\D,
\end{align}
we see that $\diam(l'_j) > \frac{1}{2}\diam(l_i)$ for every $j\in J_0$. Hence, $J_0$ is finite. For every $\eps \in (0, \frac{1}{4}\diam(l_i))$, due to $d_{\mathscr{L}_\D}(L,L')<\eps$, there exists $j_\eps\in J_0$ such that $d_{\mathcal{L}_\D}(l_i,l'_{j_\eps})<\eps$. Since $J_0$ is finite, sending $\eps\to0$, there is $j\in J_0$ such that $l_i = l'_j$. Therefore, we have $L\subset L'$, and the other direction is analogous, proving $L=L'$.

Since $d_{\mathscr{L}_\D}$ is symmetric, it only remains to verify the triangle inequality. 
Let $L = \{l_i\}_{i\in I}$, $L'=\{l'_j\}_{j\in J}$, and $L''=\{l''_k\}_{k\in K}$. 
For any $\pi\subset I\times J$ and $\pi'\subset J\times K$,
we construct the matching $\bar\pi = \{(i,k)\in I\times K: \text{$(i,j)\in \pi$ and $(j,k)\in \pi'$ for some $j\in J$}\} $. Then, we have
\begin{align*}
    \sup_{(i,k)\in \bar\pi} d_{\mathcal{L}_\D}(l_i, l''_k) \leq \sup_{(i,j)\in \pi} d_{\mathcal{L}_\D}(l_i, l'_j) + \sup_{(j,k)\in \pi'} d_{\mathcal{L}_\D}(l'_j, l''_k) \leq d_{\pi}(L,L')+ d_{\pi'}(L',L'').
\end{align*}
For any unmatched index $i\in I^{\bar\pi}$, there are two case: $i\in I^{\pi}$ or $i\not\in I^{\pi}$. In the first case, we have $\frac{1}{2}\diam(l_i) \leq d_\pi(L,L')$. In the second case, there is $j$ such that $(i,j)\in \pi$ and $j$ is unmatched in $\pi'$. Along with~\eqref{e.diam<2d+diam} for $l_i$ and $l'_j$, we have 
\begin{align*}
    \frac{1}{2}\diam(l_i) \leq d_\pi(L,L') + d_{\pi'}(L',L''),
\end{align*}
which holds for all $i\in I^{\bar\pi}$. A similar bound holds for $l_i$ replaced by $l'_j$, for any unmatched $j\in J^{\bar\pi}$. Combining the above two displays, we obtain $d_{\bar\pi}(L,L') \leq d_\pi(L,L') + d_{\pi'}(L',L'')$. Optimizing over $\pi,\pi'$ gives the triangle inequality.
\end{proof}

\subsubsection{Space of collections of simple loops}
\label{s.coll_simple}
Given any collection $L=\{l_i\}_{i\in I}$ of distinct loops, we define the level of loop $l\in L$ by $\mathrm{level}_L(l) =1 + |\{i\in I: \text{$l$ inside $l_i$}\}|$. 
Since there are finitely many loops with diameter greater than that of $l$, $\mathrm{level}_L(l)$ is finite.

Recall the notion of disjoint loops in Section~\ref{s.loop_properties}.
We define $\mathscr{L}_\D^\good$ to be the subspace of $\mathscr{L}_\D$ satisfying the following:
\begin{itemize}
    \item loops in $L$ are disjoint, simple and do not intersect $\partial \D$;
    \item for every $l\in L$, $l$ is oriented clockwise if $\mathrm{level}_L(l)$ is odd and counterclockwise if $\mathrm{level}_L(l)$ is even.
\end{itemize}
We record a basic result, extracted from \cite[Lemma~5]{benoist2019scaling}, to conclude this part.

\begin{lemma}
The metric space $\mathscr{L}_\D$ is complete and separable. The subset $\mathscr{L}^\good_\D$ is measurable with respect to the Borel $\sigma$-algebra on $\mathscr{L}_\D$.
\end{lemma}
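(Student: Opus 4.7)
My plan is to treat the three claims separately, closely following \cite[Lemma~5]{benoist2019scaling}; the harmless factor $\tfrac{1}{2}$ on the diameter term in $d_\pi$ does not affect any of the arguments there.

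For completeness, I will take a Cauchy sequence $(L_n)_n$ in $\mathscr{L}_\D$ and exploit the unmatched-loop terms in $d_\pi$: for any $\eps>0$ and $n,m$ large, the optimal matching between $L_n$ and $L_m$ must pair every loop of diameter $\geq 2\eps$ on either side (otherwise the unmatched term alone would exceed $\eps$). Together with~\eqref{e.diam<2d+diam}, this bounds uniformly in $n$ the number of loops of diameter at least $\eps$, and a diagonal extraction as $\eps\downarrow 0$ produces a family of matched ``threads'', each of which is Cauchy in the complete metric space $(\mathcal{L}_\D,d_{\mathcal{L}_\D})$ (complete by construction, as a completion). Assembling the resulting loops into a single collection $L_\infty$ yields the candidate limit; the diameter-tail condition built into the definition of $\mathscr{L}_\D$ passes automatically because only finitely many threads carry diameter above any fixed $\eps$, and $d_{\mathscr{L}_\D}(L_n,L_\infty)\to 0$ follows from reusing the matchings that identified the threads.

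For separability, my candidate dense set is the countable family of finite collections of pairwise-disjoint simple polygonal loops with vertices in $\D\cap\Q^2$. Given $L\in\mathscr{L}_\D$ and $\eps>0$, the strategy is first to discard all loops of diameter $<\eps$ (this costs at most $\eps/2$ in $d_{\mathscr{L}_\D}$ via the unmatched-loop term), leaving finitely many loops, and then to uniformly approximate each remaining weakly simple loop in $d_{\mathcal{L}_\D}$ by a simple rational polygonal loop, shrinking slightly if necessary to preserve pairwise disjointness.

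For measurability of $\mathscr{L}^\good_\D$, the plan is to write it as $\bigcap_{k\in\N} \mathscr{L}^{(k)}$, where $\mathscr{L}^{(k)}$ collects the $L\in\mathscr{L}_\D$ whose loops of diameter $\geq 1/k$ are pairwise disjoint, simple, disjoint from $(\partial\D)_{1/k}$, and carry the orientation dictated by $\mathrm{level}_L$. The finite list $\{l\in L:\diam(l)\geq 1/k\}$ depends on $L$ in a way that is continuous off the set where some loop has diameter exactly $1/k$, thanks to the matching structure of $d_{\mathscr{L}_\D}$ and~\eqref{e.diam<2d+diam}. Each of the defining conditions can then be phrased as a countable Boolean combination of events of the form ``some pair of extracted loops is at $d_{\mathcal{L}_\D}$-distance $>q$'' or ``some extracted loop is within $d_{\mathcal{L}_\D}$-distance $<q$ of a fixed simple rational polygonal loop with prescribed orientation'' for $q\in\Q$, and hence is Borel. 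The subtlest point I anticipate is the exceptional diameter locus mentioned above; this is resolved by intersecting over all $k\in\N$, since the ``bad'' thresholds for a given $L$ form a countable set and the event forces disjointness/orientation only for diameter strictly above $1/k$.
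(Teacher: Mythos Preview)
Your approach matches the paper's exactly: the paper does not prove this lemma at all but simply records it as ``extracted from \cite[Lemma~5]{benoist2019scaling}'', and you do the same while supplying a sketch. The sketches for completeness and separability are sound.

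There is one slip in your measurability outline. As written, your sets $\mathscr{L}^{(k)}$ require every loop of diameter $\geq 1/k$ to be disjoint from $(\partial\D)_{1/k}$, and then you take $\bigcap_k \mathscr{L}^{(k)}$. But an $L\in\mathscr{L}^\good_\D$ can contain a single loop of diameter $1$ sitting at distance $1/10$ from $\partial\D$; such an $L$ fails your condition already at $k=2$, so $\bigcap_k \mathscr{L}^{(k)}\subsetneq \mathscr{L}^\good_\D$. The same coupling of thresholds would also misfire for pairwise disjointness if two large loops happen to be closer than $1/k$ to each other. The fix is routine: decouple the diameter cutoff from the distance cutoff, writing the boundary-avoidance condition as $\bigcap_k \bigcup_m\{\text{every loop of diameter}\geq 1/k \text{ is disjoint from }(\partial\D)_{1/m}\}$, and similarly for disjointness and simplicity. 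With that adjustment your Borel decomposition goes through.
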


\subsection{Mapping loops to crossings}

Define $F:\mathscr{L}_\D\to\mathscr{H}_{\mathcal{A}_\D}$ by setting
\begin{align}\label{e.F(L)}
    F(L) = \Ll\{A\in \mathcal{A}_\D:\: \text{$A$ is crossed by some $l\in L$}\Rr\},\quad\forall L\in\mathscr{L}_\D.
\end{align}
Obviously, $F$ is well-defined, namely, $F(L)\in \mathscr{H}_{\mathcal{A}_\D}$.

\begin{lemma}\label{l.F_cts}
The function $F:\mathscr{L}_\D\to\mathscr{H}_{\mathcal{A}_\D}$ is continuous.
\end{lemma}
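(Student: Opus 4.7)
The plan is to verify that $F^{-1}((\circledcirc_A)^\complement)$ and $F^{-1}(\mathscr{V}_\mathcal{U})$ are both open in $\mathscr{L}_\D$ for every $A\in\mathcal{A}_\D$ and every open $\mathcal{U}\subset \mathcal{A}_\D$, which suffices by the subbase definition of $\mathbf{T}_{\mathcal{A}_\D}$. Equivalently, I would show that $F^{-1}(\circledcirc_A)$ is sequentially closed and $F^{-1}(\mathscr{V}_\mathcal{U})$ is open; since $\mathscr{L}_\D$ is a metric space, sequential closedness suffices for the first.

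For the closedness, I would take $L_n\to L$ in $\mathscr{L}_\D$ such that some $l_n\in L_n$ contains a connected compact $C_n\subset l_n\cap A$ meeting both $\partial_0 A$ and $\partial_1 A$. The lower bound $\diam(l_n)\geq d_\euc(\partial_0 A,\partial_1 A)=:\epsilon_A>0$ forces $l_n$ to be matched in any near-optimal matching realizing $d_{\mathscr{L}_\D}(L_n,L)+o(1)$; since only finitely many loops of $L$ have diameter at least $\epsilon_A/2$, pigeonhole extracts a subsequence $(n_k)$ with a common matched target $l^*\in L$ and $d_{\mathcal{L}_\D}(l_{n_k},l^*)\to 0$, so the images of $l_{n_k}$ converge to that of $l^*$ in Hausdorff metric. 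Passing to a further subsequence along which $C_{n_k}\to C$ in the Hausdorff metric on compact subsets of $A$, I would check that $C$ is connected (Hausdorff limits of connected compact sets are connected in a compact metric space), contained in $A$, meets $\partial_i A$ for $i=0,1$ (by taking limits of $C_{n_k}\cap\partial_i A$), and satisfies $C\subset l^*$; hence $l^*$ crosses $A$ and $L\in F^{-1}(\circledcirc_A)$.

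For the openness, I would fix $L\in F^{-1}(\mathscr{V}_\mathcal{U})$ with $l\in L$ crossing some $A_0\in \mathcal{U}$ through points $p_0\in l\cap\partial_0 A_0$ and $p_1\in l\cap\partial_1 A_0$, and construct a strictly thinner annulus $A^-\in \mathcal{U}$ whose boundaries $\partial_0 A^-,\partial_1 A^-$ are simple loops lying in the open annular region of $A_0$, chosen close enough to $A_0$ in $d_{\mathcal{A}_\D}$ to sit in $\mathcal{U}$. By construction, $p_0$ is strictly inside $\partial_0 A^-$ and $p_1$ is strictly outside $\partial_1 A^-$, so there is $\delta>0$ with $d(p_0,\partial_0 A^-),\,d(p_1,\partial_1 A^-)\geq 3\delta$ and $\delta<\epsilon_{A_0}/4$. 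Then for any $L'$ with $d_{\mathscr{L}_\D}(L,L')<\delta$, the diameter bound matches $l$ to some $l'\in L'$ with $d_{\mathcal{L}_\D}(l,l')<\delta$, and the near-optimal reparametrization carries $p_0,p_1$ to points $p_0',p_1'\in l'$ that remain strictly on the same sides of $\partial_0 A^-$ and $\partial_1 A^-$. A Jordan-curve argument along $l'$ (take the last intersection with $\partial_0 A^-$ on an arc from $p_0'$ to $p_1'$, then the first subsequent intersection with $\partial_1 A^-$) extracts a sub-arc contained in $A^-$ that connects $\partial_0 A^-$ to $\partial_1 A^-$, so $l'$ crosses $A^-\in\mathcal{U}$ and $L'\in F^{-1}(\mathscr{V}_\mathcal{U})$.

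The hardest step will be the openness part: a tangential crossing of $A_0$ by $l$ can be destroyed by an arbitrarily small uniform perturbation, so one cannot hope to preserve the crossing of $A_0$ itself. The remedy is to pay a small amount of ``crossing room'' by retreating to a thinner $A^-\in \mathcal{U}$ and pushing the crossing witnesses strictly into the hole of $\partial_0 A^-$ and strictly past $\partial_1 A^-$; this strict separation persists under uniform proximity of loops, reducing persistence of the crossing to a purely topological Jordan-curve statement.
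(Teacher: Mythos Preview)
Your proposal is correct and follows the same overall strategy as the paper: verify that the preimages under $F$ of the subbasic open sets $(\circledcirc_A)^\complement$ and $\mathscr{V}_{\mathcal{U}}$ are open. The execution differs in a few places.

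For $(\circledcirc_A)^\complement$, you argue that $F^{-1}(\circledcirc_A)$ is sequentially closed by extracting, via pigeonhole on the finitely many large loops of $L$, a single target loop $l^*$ and then a Hausdorff-limit crossing $C\subset l^*\cap A$. The paper instead works directly on the complement: given $L$ with no loop crossing $A$, it observes that each non-crossing loop $l_i$ must miss one of $\partial_0 A$, $\partial_1 A$ entirely (otherwise an arc of $l_i$ between the two boundaries would yield a crossing), so Lemma~\ref{l.d(l,l')<delta} gives a uniform $\delta$ over the finitely many large $l_i$ such that any $l$ with $d_{\mathcal{L}_\D}(l,l_i)<\delta$ still misses that boundary. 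Your compactness route is clean and avoids this case split; the paper's $\eps$--$\delta$ route is more elementary and avoids Blaschke selection.

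For $\mathscr{V}_{\mathcal{U}}$, the two arguments are essentially the same. You build a strictly thinner $A^-\in\mathcal{U}$ and recover a crossing arc of the perturbed loop $l'$ by a last-exit/first-entry argument relative to the Jordan curves $\partial_i A^-$. The paper states this step more tersely as \eqref{e.d<c=>cross_A'}, citing Lemmas~\ref{l.d<r,in()_r} and~\ref{l.d(l,l')<delta}, but the content is identical: retreat to a nearby annulus in $\mathcal{U}$ so that the strict separation of witness points persists under uniform proximity. Your remark that one cannot preserve the crossing of $A_0$ itself (tangential crossings can be destroyed) is exactly why both proofs move to a nearby annulus rather than the original one.
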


\begin{proof}
Due to~\eqref{e.def_open_sets}, there are two types of open subsets of $\mathscr{H}_{\mathcal{A}_\D}$ to consider: $\circledcirc_A^\complement$ for $A \in \mathcal{A}_\D$ and $\mathscr{V}_\mathcal{U}$ for an open set $\mathcal{U}\subset \mathcal{A}_\D$.
It suffices to show that their preimages under $F$ are open in $\mathscr{L}_\D$.

First, fixing any annulus $A$, we show that $F^{-1}(\circledcirc_A^\complement)$ is open.
Let $L =\{l_i\}_{i\in I} \in F^{-1}(\circledcirc_A^\complement)$ which means that no loop in $L$ crosses $A$. We fix some $\eps>0$ small so that, for any loop $l$,
\begin{align}\label{e.diam<eps=>no_cross}
    \diam(l)<\eps \qquad \Longrightarrow\qquad \text{$l$ does not cross $A$}.
\end{align}
We set $I_0=\{i\in I: \diam(l_i)\geq \frac{\eps}{2}\}$ to collect larger loops in $L$. Since $I_0$ is finite and $L\in F^{-1}(\circledcirc_A^\complement)$, by Lemma~\ref{l.d(l,l')<delta}, we can choose $\delta\in(0,\frac{\eps}{4})$ small to ensure that, for any loop $l$,
\begin{align}\label{e.inf<delta=>no_cross}
    \inf_{i\in I_0}d_{\mathcal{L}_\D}(l, l_i)\leq \delta\qquad\Longrightarrow\qquad \text{$l$ does not cross $A$}.
\end{align}

We show that, for any $L'=\{l'_j\}_{j\in J}$ satisfying $d_{\mathscr{L}_\D}(L,L')<\frac{\delta}{2}$, no loop in $L'$ crosses $A$, meaning that $L'\in F^{-1}(\circledcirc_A^\complement)$. We fix any matching $\pi\in I\times J$ such that $d_\pi (L,L') <\delta$. For any $j\in J$, we consider three cases:
\begin{enumerate}
    \item if there is $i\in I_0$ such that $(i,j)\in \pi$, then we have $d_{\mathcal{L}_\D}(l_i,l'_j) <\delta$, which by~\eqref{e.inf<delta=>no_cross} implies that $l'_j$ does not cross $A$;
    \item if there is $i\not\in I_0$ such that $(i,j)\in \pi$, then by~\eqref{e.diam<2d+diam}, we have $\diam(l'_j) \leq 2d_{\mathcal{L}_\D}(l_i,l'_j) + \diam(l_i)< 2\delta + \frac{\eps}{2}<\eps$, implying that $l'_j$ does not cross $A$ by~\eqref{e.diam<eps=>no_cross};
    \item if index $j$ is unmatched, then we have $\diam(l'_j) <2\delta<\frac{\eps}{2}$ and thus $l'_j$ does not cross $A$ due to~\eqref{e.diam<eps=>no_cross}.
\end{enumerate}
We conclude that $L'\in F^{-1}(\circledcirc_A^\complement)$ and therefore the open metric ball centered at $L$ of radius $\frac{\delta}{2}$ is contained in $F^{-1}(\circledcirc_A^\complement)$, proving $F^{-1}(\circledcirc_A^\complement)$ is open.

Next, we show that $F^{-1}(\mathscr{V}_\mathcal{U})$ is open, for any fixed open set $\mathcal{U}\subset\mathcal{A}_\D$. Let $L \in F^{-1}(\mathscr{V}_\mathcal{U})$, then by the definition of $\mathscr{V}_\mathcal{U}$ in~\eqref{e.def_open_sets}, we have $F(L)\cap \mathcal{U}\neq\emptyset$, meaning there exists $l\in L$ and $A\in \mathcal{U}$ such that $l$ crosses $A$. Since $\mathcal{U}$ is open, there is $\eps>0$ such that the open $d_{\mathcal{A}_\D}$-metric ball $\mathcal{B}$ centered at $A$ with radius $\eps$ is a subset of $\mathcal{U}$. 

Using Lemma~\ref{l.d<r,in()_r} and Lemma~\ref{l.d(l,l')<delta}, we can find $c>0$ sufficiently small such that, for every loop $l'$,
\begin{align}\label{e.d<c=>cross_A'}
    d_{\mathcal{L}_\D}(l',l)<c\qquad \Longrightarrow\qquad \text{$l'$ crosses some $A' \in \mathcal{B}$}.
\end{align}
We fix $\delta>0$ satisfying $\delta < \min\{\frac{1}{2}c, \frac{1}{8}\diam(l)\}$. Let $L'$ be any collection satisfying $d_{\mathscr{L}_\D}(L',L)<\delta$. We can fix a matching $\pi$ between $L'$ and $L$ such that $d_\pi (L',L)< 2\delta$. Then, $l$ is matched because otherwise we would have $d_\pi(L',L)>4\delta$ by the choice of $\delta$. Hence, $l$ is matched with some $l'\in L'$. This gives $d_{\mathcal{L}_\D}(l',l)<2\delta <c$ which together with~\eqref{e.d<c=>cross_A'} implies that $l'$ crosses some $A'\in \mathcal{B}$. 
We get $F(L')\cap \mathcal{U}\neq\emptyset$, or equivalently $L' \in F^{-1}(\mathscr{V}_\mathcal{U})$. We deduce that the open $d_{\mathscr{L}_\D}$-metric ball centered at $L$ with radius $\delta$ is contained in $F^{-1}(\mathscr{V}_\mathcal{U})$, proving that $F^{-1}(\mathscr{V}_\mathcal{U})$ is open.
\end{proof}

We turn to the injectivity of $F$ on $\mathscr{L}_\D^\good$. For this goal, we need the following lemma.
Recall the definition of separation in Section~\ref{s.orders_on_annuli}.
For every $L\in\mathscr{L}_\D$, we define
\begin{align*}
    \mathcal{G}(L) = \Ll\{A\in\A_\D: A\text{ is separated by some }l\in L \Rr\}.
\end{align*}

\begin{lemma}\label{l.F=G}
If $F(L) = F(L')$ for some $L,L'\in\mathscr{L}^\good_\D$, then $\mathcal{G}(L) = \mathcal{G}(L')$.
\end{lemma}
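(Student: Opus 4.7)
By symmetry it suffices to prove $\mathcal{G}(L) \subset \mathcal{G}(L')$. Fix $A \in \mathcal{G}(L)$ and a loop $l \in L$ separating $A$. The plan is to extract from $L'$ a single loop $l'$ that coincides with $l$ as a subset of $\R^2$; this $l'$ will then automatically separate $A$.

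The geometric input is a family of small round annuli crossed by $l$. Because $l$ separates $A$, the definition of a separation gives $l \cap \partial A = \emptyset$. Fix $p \in l$ and choose $\rho > 0$ small enough that $B(p, \rho) \subset A \setminus \partial A$ and $l \cap B(p, \rho)$ is a single simple sub-arc of $l$ through $p$; the latter is available for $\rho$ small since $l$ is a homeomorphism from $\S^1$ onto its image, so $l^{-1}(B(p,\rho))$ is an arc in $\S^1$ around $l^{-1}(p)$. For every $\eps \in (0, \rho)$, the round annulus $A^p_{\eps,\rho} = \{z \in \R^2 : \eps \leq |z-p| \leq \rho\}$ sits inside $A$ and $l$ crosses it, since a sub-arc of the arc $l \cap B(p,\rho)$ joins $\partial B(p, \eps)$ to $\partial B(p, \rho)$. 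Consequently $A^p_{\eps,\rho} \in F(L) = F(L')$, and the hypothesis produces some $l'_{p,\eps} \in L'$ that crosses $A^p_{\eps,\rho}$; any such crossing forces $\diam(l'_{p,\eps}) \geq \rho - \eps$ and $l'_{p,\eps} \cap \bar B(p, \eps) \neq \emptyset$.

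A limit-and-pigeonhole step will then locate, for each $p \in l$, a single loop of $L'$ passing through $p$. With $p$ and $\rho$ fixed as above, the definition of $\mathscr{L}_\D$ ensures that only finitely many loops in $L'$ have diameter at least $\rho/2$. Along a sequence $\eps_n \downarrow 0$ with $\eps_n < \rho/2$, pigeonhole supplies a single $l'(p) \in L'$ with $l'(p) = l'_{p,\eps_n}$ for infinitely many $n$; since $l'(p)$ is closed in $\R^2$ and meets $\bar B(p, \eps_n)$ for arbitrarily small $\eps_n$, we conclude $p \in l'(p)$. Disjointness of the loops of $L'$ makes $l'(p)$ unique, and thus
\[
    l \;=\; \bigsqcup_{l'' \in L'} \bigl(l \cap l''\bigr)
\]
is a partition of $l$ into at most countably many closed (in $l$) subsets. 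Sierpi\'nski's theorem on continua forbids a non-trivial countable closed partition of a compact connected Hausdorff space, so a single $l' \in L'$ satisfies $l \subset l'$. Since $l$ and $l'$ are both simple loops, and a proper compact connected subset of a topological circle is either a point or a closed arc (never a circle), we must have $l = l'$, whence $A \in \mathcal{G}(L')$.

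The main obstacle is the upgrade from the pointwise fact that every $p \in l$ sits on some loop of $L'$ to the global fact that one fixed $l' \in L'$ contains all of $l$. The disjointness and simplicity encoded in $\mathscr{L}^\good_\D$, combined with Sierpi\'nski's theorem on continua, are precisely what make this step work.
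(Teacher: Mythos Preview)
Your argument is essentially correct and takes a genuinely different route from the paper.  Two small imprecisions are worth flagging.  First, under the paper's definition a loop is \emph{simple} when its image is homeomorphic to $\S^1$; the parametrising map $l$ itself need not be injective, so ``$l$ is a homeomorphism from $\S^1$ onto its image'' is not literally justified.  This is harmless: your argument only uses the image $l(\S^1)$, which \emph{is} a topological circle, and you may run everything with a chosen homeomorphism $\phi:\S^1\to l(\S^1)$ in place of $l$.  Second, $l\cap B(p,\rho)$ need not be a single arc for any $\rho>0$ (the curve may wiggle in and out of the ball); what is true, and all you need, is that the connected component of $l\cap B(p,\rho)$ containing $p$ is an open arc whose closure meets $\partial B(p,\rho)$, and a sub-arc of that gives the crossing of $A^p_{\eps,\rho}$.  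You also implicitly need $\rho<\tfrac12\diam(l)$ so that $l$ exits $B(p,\rho)$; this is an easy extra constraint.

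As for the comparison: the paper argues by contradiction in two cases (some $l'\in L'$ meets $l$, or no $l'$ does), each time exhibiting an annulus in $F(L)\triangle F(L')$.  Your approach instead shows directly that the loop $l\in L$ separating $A$ coincides, as a subset of $\R^2$, with some $l'\in L'$, by covering $l$ with countably many disjoint closed pieces $l\cap l''$ and invoking Sierpi\'nski's theorem on continua.  This is a heavier tool than anything the paper uses, but it buys you more: your argument never uses the particular annulus $A$, so it in fact yields $L=_{\R^2}L'$ directly from $F(L)=F(L')$, which is exactly the content of the paper's subsequent Lemma~\ref{l.bijection}.  In other words, your proof of Lemma~\ref{l.F=G} already contains the injectivity of $F|_{\mathscr{L}^\good_\D}$, whereas the paper proves Lemma~\ref{l.F=G} first and then uses it as input to Lemma~\ref{l.bijection}.
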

\begin{proof}

Assuming $F(L)=F(L')$, we want to show $\mathcal{G}(L)\subset \mathcal{G}(L')$. The other direction follows by symmetry.
We argue by contradiction and suppose that there is $A\in \mathcal{G}(L)\setminus \mathcal{G}(L')$. Let $l\in L$ be a loop that separates $A$. We consider two cases (see Figure~\ref{fig:pic4.9cases}). Recall the notation and convention for loops viewed as subsets of $\R^2$ in Section~\ref{s.loop_subset_R^2}. 

\textit{Case~1}:
$l'\cap l \neq \emptyset$ for some $l'\in L'$. 
Since $l$ is simple and $l$ separates $A$, the closure of either of the two components of $A\setminus l$ is an annulus. For each $i\in\{0,1\}$, we denote the one containing $\partial_i A$ by $A_i$.

Due to $A\not \in \mathcal{G}(L')$, we have $l'\neq l$ and there is some $x\in l'\setminus l$ in the interior of $A_i$ for some $i\in\{0,1\}$.
Then, we can find a loop separating $A_i$ and $x\in \tilde l$.
Let $\tilde A$ be the annulus with the boundary loops $l$ and $\tilde l$. Since the loops in $L$ are disjoint, $\tilde A$ is not crossed by any loop in $L$. However, $\tilde A$ is crossed by $l'$, which implies that $\tilde A\in F(L')\setminus F(L)$, contradicting $F(L)=F(L')$.

\textit{Case~2}: $l'\cap l = \emptyset$ for all $l'\in L'$. 
Fix any $\eps\in(0, \frac{1}{7}\diam (l)]$ and set $L'_\eps=\{l'\in L' : \diam (l')>\eps\}$. Fixing any $x\in l$, we set $B_r(x) = \{z\in\R^2:|z-x|\leq r\}$ for any $r>0$.
Since $L'_\eps$ is finite, we can find $\delta\in(0,\eps)$ sufficiently small such that
\begin{align}\label{e.B_delta(x)cap=emptyset}
    B_\delta(x)\cap \Ll(\cup_{l'\in L'_\eps} l'\Rr) =\emptyset.
\end{align}
Set $A^\star$ to be the closure of $B_{\delta+2\eps}(x)\setminus B_{\delta}(x)$, which is an annulus. We have that $l$ crosses $A^\star$, because otherwise $\diam(l)\leq 6\eps$, which contradicts the choice of $\eps$. Thus, $A^\star \in F(L)$.

Then, we show that no loop in $L'$ crosses $A^\star$. If $l'$ crosses $A^\star$, then $\diam(l')\geq 2\eps$. Hence, no loop in $L'\setminus L'_\eps$ can cross $A^\star$. Also by the choice of $\delta$ as in~\eqref{e.B_delta(x)cap=emptyset}, no loop in $L'_\eps$ crosses $A^\star$. Therefore, we get $A^\star \not\in F(L')$, contradicting $F(L)=F(L')$.

\begin{figure}
    \centering
    
\begin{tikzpicture}[scale=0.6]
\fill[blue!10]
(3,2) to[in=-60,out=-45] (3,-2) to[in=30, out=120] (-0.5,-3) to[in=-100, out=-150] (-3.5,1.5) to[in=135,out=70] (3,2);
\fill[blue!20] (-0.3,1.8) to[out=20,in=130] (3,1.8) to[out=-50,in=80] (2.4,-0.4) to[out=-100,in=40] (-0.8,-2.8) to[out=-140,in=-80] (-2.6,-0.4) to[out=100,in=160] (-0.3,1.8);
\fill[blue!10] (-2,0) to[out=-100,in=-175] (-1.5,-1.5) to[out=15, in=170] (0,-2) to[out=-10,in=-170] (1,-1.2) to[out=10, in=-105] (2.5,1.2) to[out=75, in=-5] (-0.2,1.4) to[out=175,in=80] (-2,0);
\fill[white]
(0,-1) to[out=70,in=-95] (1,0.5) to[out=85,in=15] (-0.5,0.5) to[out=-165,in=105] (-0.25,0) to[out=-85,in=-110] (0,-1);

\draw (3,2) to[in=-60,out=-45] (3,-2) to[in=30, out=120] (-0.5,-3) to[in=-100, out=-150] (-3.5,1.5) to[in=135,out=70] (3,2)
(0,-1) to[out=70,in=-95] (1,0.5) to[out=85,in=15] (-0.5,0.5) to[out=-165,in=105] (-0.25,0) to[out=-85,in=-110] (0,-1);
\draw[very thick] (-2,0) to[out=-100,in=-175] (-1.5,-1.5) to[out=15, in=170] (0,-2) to[out=-10,in=-170] (1,-1.2) to[out=10, in=-105] (2.5,1.2) to[out=75, in=-5] (-0.2,1.4) to[out=175,in=80] (-2,0);
\draw[blue,very thick] (-4,3) to[out=-60,in=140] (-1.8,0.6) to[out=-40,in=-105] (-0.4,1.4) to[out=75,in=-25] (-0.6,2.8) to[out=155,in=120] (-4,3);
\draw[red] (-0.6,2.2) node{\large$x$};
\draw[dashed, thick] (-0.3,1.8) to[out=20,in=130] (3,1.8) to[out=-50,in=80] (2.4,-0.4) to[out=-100,in=40] (-0.8,-2.8) to[out=-140,in=-80] (-2.6,-0.4) to[out=100,in=160] (-0.3,1.8);
\fill[red] (-0.3,1.8) circle (2pt);
\draw[blue] (-2.6,3.1) node{\large$l'$};
\draw[-stealth] (2.4,-0.4) -- (2.8,-1);
\draw[-stealth] (0,-2) -- (-0.5,-1.2);
\draw[] (3,-1.2) node{\large$\tilde l$}
(-0.6,-0.9) node{\large$l$}
;

\fill[blue!10,xshift=100mm]
(3,2) to[in=-60,out=-45] (3,-2) to[in=30, out=120] (-0.5,-3) to[in=-100, out=-150] (-3.5,1.5) to[in=135,out=70] (3,2);

\fill[blue!10,xshift=100mm] (-2,0) to[out=-100,in=-175] (-1.5,-1.5) to[out=15, in=170] (0,-2) to[out=-10,in=-170] (1,-1.2) to[out=10, in=-105] (2.5,1.2) to[out=75, in=-5] (-0.2,1.4) to[out=175,in=80] (-2,0);
\fill[white,xshift=100mm]
(0,-1) to[out=70,in=-95] (1,0.5) to[out=85,in=15] (-0.5,0.5) to[out=-165,in=105] (-0.25,0) to[out=-85,in=-110] (0,-1);
\draw[xshift=100mm] (3,2) to[in=-60,out=-45] (3,-2) to[in=30, out=120] (-0.5,-3) to[in=-100, out=-150] (-3.5,1.5) to[in=135,out=70] (3,2)
(0,-1) to[out=70,in=-95] (1,0.5) to[out=85,in=15] (-0.5,0.5) to[out=-165,in=105] (-0.25,0) to[out=-85,in=-110] (0,-1);
\draw[xshift=100mm]
(-1.5,-1.5) circle (26pt);
\fill[pattern=north west lines, pattern color=blue!60,xshift=100mm]
(-1.5,-1.5) circle (26pt);
\fill[blue!10,xshift=100mm]
(-1.5,-1.5) circle (16pt);
\draw[xshift=100mm]
(-1.5,-1.5) circle (16pt);
\draw[black, very thick,xshift=100mm] (-2,0) to[out=-100,in=-175] (-1.5,-1.5) to[out=15, in=170] (0,-2) to[out=-10,in=-170] (1,-1.2) to[out=10, in=-105] (2.5,1.2) to[out=75, in=-5] (-0.2,1.4) to[out=175,in=80] (-2,0);
\fill[red,xshift=100mm] (-1.5,-1.5) circle (2pt);
\draw[red,xshift=100mm] (-1.5,-1.8) node{\large$x$};

\draw[] (0,-4) node{\large Case $1$} (10,-4) node{\large Case $2$};
\draw[-stealth,xshift=100mm] (-2,-2.1) -- (-2.6,-3);
\draw[xshift=100mm] (-3,-3.2) node{ $A^\star$};
\end{tikzpicture}

    \caption{In Case $1$ where $l'\cap l\neq \emptyset$, we can find an annulus $\tilde A$, shaded by darker blue. In Case $2$ where $l'\cap l= \emptyset$, we construct annulus $A^\star$. Both annuli give a contradiction.}
    \label{fig:pic4.9cases}
\end{figure}
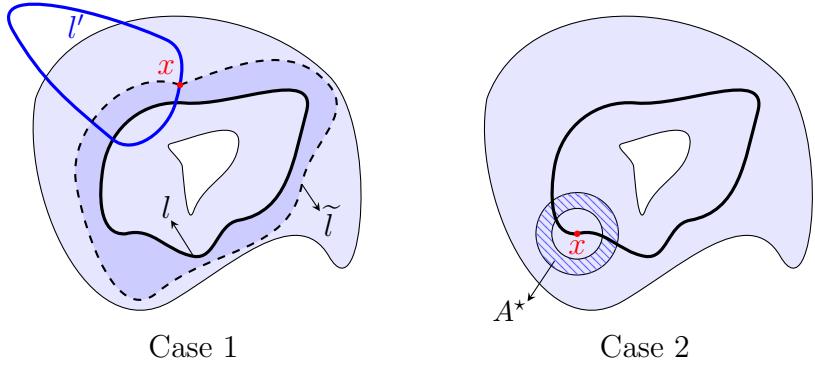

We reach a contradiction in both cases. Therefore, we conclude that $\mathcal{G}(L)\subset \mathcal{G}(L')$.
\end{proof}
\begin{lemma}\label{l.bijection}
The function $F|_{\mathscr{L}_\D^\good}:\mathscr{L}_\D^\good \to \mathscr{H}_{\mathcal{A}_\D}$ is injective.
\end{lemma}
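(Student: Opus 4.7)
The plan is to combine Lemma~\ref{l.F=G} with a loop-by-loop recovery argument. Suppose $L,L'\in \mathscr{L}_\D^\good$ satisfy $F(L)=F(L')$. Lemma~\ref{l.F=G} gives $\mathcal{G}(L)=\mathcal{G}(L')$, so the task reduces to reading off the individual loops and orientations of $L$ from the collection of annuli that it separates. I will show first that $L$ and $L'$ have identical underlying sets of loops (viewed as subsets of $\R^2$), and then that orientations automatically match.

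To isolate an individual loop, I would fix $l\in L$ and build a sequence of annuli $(A_n)_{n\in \N}$ that trap $l$. Since $l$ is a simple loop in the interior of $\D$, disjoint from every other loop of $L$, I would invoke a Schoenflies homeomorphism sending $l$ to the unit circle to produce annuli $A_n$ with $\partial_0 A_n$ inside $l$, $\partial_1 A_n$ outside $l$, and both $\partial_i A_n$ converging to $l$ (in Hausdorff distance). By construction $l$ is a separation of $A_n$, so $A_n\in \mathcal{G}(L)=\mathcal{G}(L')$ and some $l'_n\in L'$ separates $A_n$. Since $l'_n\subset A_n$ must contain $\partial_0 A_n$ in its interior, $\diam(l'_n)\ge \diam(\partial_0 A_n)\to \diam(l)>0$; hence eventually $\diam(l'_n)\ge \tfrac{1}{2}\diam(l)$, and the defining property of $\mathscr{L}_\D$ says that only finitely many loops of $L'$ meet this diameter bound. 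By pigeonhole, along a subsequence all $l'_n$ equal a common loop $l^*\in L'$, so $l^*\subset \bigcap_n A_n = l$. Since $l^*$ and $l$ are both simple closed curves and a proper compact subset of a Jordan curve cannot itself be a Jordan curve, $l^*=l$ as subsets of $\R^2$. Swapping the roles of $L$ and $L'$ gives the reverse inclusion, so $L$ and $L'$ consist of the same loops as subsets of $\R^2$.

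Finally, the orientation on every loop in a collection in $\mathscr{L}_\D^\good$ is dictated by the parity of its level, and the level is a purely set-theoretic notion counting how many loops of the collection strictly contain it. Since $L$ and $L'$ have the same underlying loops in $\R^2$, every loop has the same level in both collections and hence the same orientation, so $L=L'$ in $\mathscr{L}_\D^\good$. The main obstacle I anticipate is the construction in the middle paragraph: one must verify that the shrinking annuli $A_n$ can be chosen around an arbitrary continuous simple loop $l$ while keeping $\partial_0 A_n$ strictly inside and $\partial_1 A_n$ strictly outside $l$. This is where the Schoenflies/Annuli Theorem input (already invoked above Lemma~\ref{l.d<r,in()_r}) is essential, reducing the construction to thin round annuli in the standardized picture and delivering the required Hausdorff convergence.
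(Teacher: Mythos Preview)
Your proof is correct and follows essentially the same approach as the paper: both invoke Lemma~\ref{l.F=G} to pass to $\mathcal{G}(L)=\mathcal{G}(L')$, then use finiteness of loops above a fixed diameter to pin down, for each $l\in L$, a unique matching simple loop in $L'$ via shrinking separated annuli, and finally recover orientations from the level convention in $\mathscr{L}_\D^\good$. The only cosmetic difference is that you extract the matching loop by pigeonhole along a sequence $A_n\searrow l$, whereas the paper isolates a single annulus containing at most one large loop of $L'$ and then shrinks via a circulation argument~\eqref{e.hat_A...=>...}; both yield $l^*\subset l$ and hence $l^*=_{\R^2}l$.
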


\begin{proof}
For brevity, we write $F = F|_{\mathscr{L}_\D^\good}$ here.
Suppose that $F(L)= F(L')$ for some $L,L' \in \mathscr{L}_\D^\good$. By Lemma~\ref{l.F=G}, $L$ and $L'$ separate the same set of annuli.
Recall the notation $=_{\R^2}$ for loop collections from Section~\ref{s.loop_subset_R^2}.
First, we want to show that $L=_{\R^2}L'$.

We fix any $\tilde l \in L$. Let $\tilde A$ be an annulus separated by $\tilde l$, then $\tilde l$ is contained in the interior of $\tilde A$. 
There is a constant $c>0$ such that, for any loop $l$,
\begin{align}\label{e.diam<c=>no_sep}
    \diam(l) < c\qquad\Longrightarrow \qquad \text{$l$ does not separate $\tilde A$}.
\end{align}
Set $L_c=\{l\in L: \diam(l) \geq c\}$ and $L'_c=\{l'\in L': \diam(l') \geq c\}$. Since both $L_c$ and $L'_c$ are finite and contain disjoint loops, there exists an annulus $A\subset \tilde A$ satisfying
\begin{enumerate}
    \item $\tilde l$ separates $A$ and is contained in the interior of $A$;
    \item $A$ does not intersect any $l \in L_c\setminus\{\tilde l\}$;
    \item $A$ contains at most one loop in $L'_c$.
\end{enumerate}
The first property with $\mathcal{G}(L)=\mathcal{G}(L')$ implies the existence of $\tilde l'\in L'$ that separates $A$. 
Due to~\eqref{e.diam<c=>no_sep}, we have $\tilde l'\in L'_c$. The third property of $A$ shows that $\tilde l'$ is the unique loop in $L'$ that separates $A$. 

Recall the definitions of $\leq_\sep$ and circulation in Section~\ref{s.orders_on_annuli}. We argue that
\begin{align}\label{e.hat_A...=>...}
    \text{$\hat A$ circulates $A$;}\quad \text{$\tilde l$ separates $\hat A$}\qquad\Longrightarrow\qquad \text{$\tilde l'$ separates $\hat A$.}
\end{align}
Assuming the left-hand side, by $\mathcal{G}(L)=\mathcal{G}(L')$, there is $\hat l' \in L'$ separating $\hat A$. Since Lemma~\ref{l.ineq_sep} implies $A\leq_\sep \hat A$, we have that $\hat l'$ separates $A$. The uniqueness of $\tilde l'$ implies that $\tilde l'= \hat l'$, verifying~\eqref{e.hat_A...=>...}.

Using~\eqref{e.hat_A...=>...} for $\hat A$ arbitrarily close to $\tilde l$, we deduce that $\tilde l' \subset \tilde l$. Both of them are simple loops, so $\tilde l'=_{\R^2} \tilde l$ and thus $L\subset_{\R^2} L'$. By symmetry, we get $L=_{\R^2} L'$. Hence, the loops in $L$ and $L'$ are identified with each other up to orientation. Due to $L,L'\in\mathscr{L}_\D^\good$, the loops in $L$ and $L'$ are uniquely oriented. Therefore, we conclude that $L=L'$.
\end{proof}

\subsubsection{A continuous bijection}\label{s.cts_bij}
We restrict $F:\mathscr{L}_\D \to \mathscr{H}_{\mathcal{A}_\D}$ defined in~\eqref{e.F(L)} to
\begin{align}\label{e.F_biject}
    \hat F : \mathscr{L}_\D^\good \to F(\mathscr{L}_\D^\good).
\end{align}
Namely, we set $\hat F=F$ on $\mathscr{L}_\D^\good$ and restrict its range to $F(\mathscr{L}_\D^\good)$.
We endow $\mathscr{L}_\D^\good$ and $F(\mathscr{L}_\D^\good)$ with the subspace topologies induced by $\mathscr{L}_\D$ and $\mathscr{H}_{\mathcal{A}_\D}$, respectively.
\begin{proposition}\label{p.cts_bijection}
The function $\hat F$ is a continuous bijection.
\end{proposition}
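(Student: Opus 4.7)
The proof plan is essentially to combine the two preceding lemmas. I would argue that bijectivity of $\hat F$ is immediate from its construction together with the injectivity already established, and that continuity transfers automatically from $F$ upon restricting the codomain to the subspace topology on $F(\mathscr{L}^\good_\D)$.

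More precisely, for bijectivity, surjectivity of $\hat F$ onto $F(\mathscr{L}^\good_\D)$ holds by definition, since $\hat F$ is obtained from $F$ by restricting its range precisely to the image of $\mathscr{L}^\good_\D$. Injectivity is exactly the content of Lemma~\ref{l.bijection}, which shows that no two distinct elements of $\mathscr{L}^\good_\D$ can give rise to the same set of crossed annuli; this uses the dual description via separated annuli (Lemma~\ref{l.F=G}) together with the fact that elements of $\mathscr{L}^\good_\D$ come with a canonical orientation determined by their level parity, so $=_{\R^2}$-equality forces equality in $\mathscr{L}^\good_\D$.

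For continuity, I would invoke Lemma~\ref{l.F_cts}, which asserts continuity of the unrestricted map $F:\mathscr{L}_\D\to\mathscr{H}_{\mathcal{A}_\D}$. The restriction of the domain to $\mathscr{L}^\good_\D$, equipped with the subspace topology inherited from $\mathscr{L}_\D$, yields a continuous map into $\mathscr{H}_{\mathcal{A}_\D}$ by composition with the inclusion. Then composing with the (continuous) inclusion $F(\mathscr{L}^\good_\D)\hookrightarrow\mathscr{H}_{\mathcal{A}_\D}$ and using the universal property of the subspace topology on $F(\mathscr{L}^\good_\D)$, one obtains that the corestriction $\hat F:\mathscr{L}^\good_\D\to F(\mathscr{L}^\good_\D)$ is continuous.

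There is essentially no obstacle here: the work has been done in Lemma~\ref{l.F_cts} and Lemma~\ref{l.bijection}. The only thing to flag is that the proposition does \emph{not} assert that $\hat F$ is a homeomorphism; continuity of the inverse is not claimed, and indeed would require an additional compactness or openness argument that is not needed for Result~\ref{resultA}.
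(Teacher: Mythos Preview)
Your proposal is correct and takes essentially the same approach as the paper: the paper's proof is a one-line citation of Lemma~\ref{l.F_cts} and Lemma~\ref{l.bijection}, which is exactly what you spell out. Your closing remark that $\hat F$ is not claimed to be a homeomorphism is also noted by the paper immediately after the proposition.
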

\begin{proof}
This follows from Lemma~\ref{l.F_cts} and Lemma~\ref{l.bijection}.
\end{proof}
We denote the inverse of $\hat F$ by $\hat F^{-1}: F(\mathscr{L}_\D^\good)\to \mathscr{L}_\D^\good$. 
We do not expect $\hat F^{-1}$ to be continuous because the topology on $\mathscr{H}_{\mathcal{A}_\D}$ is coarser than that on $\mathscr{L}^\good_\D$ in the sense that $\mathscr{H}_{\mathcal{A}_\D}$ is compact by Lemma~\ref{l.basics_topology} while $\mathscr{L}^\good_\D$ is not.

\subsection{Conformal invariance}\label{s.conformal_inv}

We clarify the meaning of conformal invariance in the spaces described previously. We identify $\R^2$ with $\C$ in the obvious way. 
For any conformal automorphism $\varphi:\D\to\D$, we describe its actions on the elements in $\mathscr{H}_{\mathcal{A}_\D}$ and $\mathscr{L}_\D$. 

We define $\varphi_{\mathscr{H}}:\mathscr{H}_{\mathcal{A}_\D}\to\mathscr{H}_{\mathcal{A}_\D}$ by setting
$\varphi_{\mathscr{H}}(\mathcal{S}) = \{A \in \mathcal{A}_\D:\varphi^{-1}(A)\in \mathcal{S}\}$ for every $\mathcal{S}\in \mathscr{H}_{\mathcal{A}_\D}$. Clearly, $\varphi_{\mathscr{H}}$ is well-defined. Heuristically, for any percolation configuration $\omega$ viewed as a subset of $\R^2$, $A$ is crossed by the image of $\omega$ under $\varphi$ (i.e.\ $A\in\varphi_{\mathscr{H}}(\omega)$) if and only if $\varphi^{-1}(A)$ is crossed by $\omega$ (i.e.\ $\varphi^{-1}(A)\in\omega$). Thus, the definition of $\varphi_{\mathscr{H}}$ is natural.

We define $\varphi_{\mathscr{L}}:\mathscr{L}_\D\to\mathscr{L}_\D$ by setting $\varphi_{\mathscr{L}}(L) = \{\varphi(l)\}_{l\in\ L}$ for every $L\in\mathscr{L}_\D$. Note that $\varphi_{\mathscr{L}}$ maps $\mathscr{L}^\good_\D$ back to $\mathscr{L}^\good_\D$.

For two random variables $X_1$ and $X_2$, we write $X_1\stackrel{\d}{=}X_2$ if they have the same probability distribution. An $\mathscr{H}_{\mathcal{A}_\D}$-valued (resp.\ $\mathscr{L}_\D$-valued) random variable $X$, is said to be \textbf{conformally invariant} if $X \stackrel{\d}{=} \varphi_{\mathscr{H}}(X)$ (resp.\ $X \stackrel{\d}{=} \varphi_{\mathscr{L}}(X)$) for every conformal automorphism $\varphi$ on $\D$.

\begin{lemma}\label{l.conformal_inv}
Let $\eta$ be an $\mathscr{L}_\D$-valued random variable and define $\omega = F(\eta)$. If $\eta$ is conformally invariant, then so is $\omega$.
\end{lemma}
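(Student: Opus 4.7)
The plan is to show that the map $F$ intertwines the two actions of $\varphi$, i.e.
\begin{align*}
    F\circ\varphi_{\mathscr{L}} \;=\; \varphi_{\mathscr{H}}\circ F \qquad\text{on }\mathscr{L}_\D,
\end{align*}
and then combine this with the conformal invariance of $\eta$ together with the measurability of $F$ (guaranteed by Lemma~\ref{l.F_cts}).

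First I would verify the intertwining identity pointwise. Fix any $L\in\mathscr{L}_\D$ and any $A\in\mathcal{A}_\D$. Since $\varphi$ is a homeomorphism of $\D$ onto itself, a compact connected subset $C\subset A$ intersects both $\partial_0 A$ and $\partial_1 A$ if and only if $\varphi^{-1}(C)\subset \varphi^{-1}(A)$ is a compact connected subset intersecting both $\partial_0\varphi^{-1}(A)$ and $\partial_1\varphi^{-1}(A)$. Therefore, for any loop $l\in L$, the loop $\varphi(l)$ crosses $A$ (in the sense of Section~\ref{s.orders_on_annuli}) if and only if $l$ crosses $\varphi^{-1}(A)$. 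Applying the definitions~\eqref{e.F(L)} and of $\varphi_{\mathscr{H}}$, this gives
\begin{align*}
    F(\varphi_{\mathscr{L}}(L)) = \{A\in\mathcal{A}_\D : \varphi^{-1}(A)\text{ is crossed by some }l\in L\} = \varphi_{\mathscr{H}}(F(L)).
\end{align*}

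Next I would apply this identity to $\eta$. By Lemma~\ref{l.F_cts}, $F$ is continuous, hence Borel measurable, and the same holds for $\varphi_{\mathscr{L}}$ and $\varphi_{\mathscr{H}}$ (both are continuous from their explicit definitions). The assumption $\eta\stackrel{\d}{=}\varphi_{\mathscr{L}}(\eta)$ therefore yields
\begin{align*}
    \omega = F(\eta) \stackrel{\d}{=} F(\varphi_{\mathscr{L}}(\eta)) = \varphi_{\mathscr{H}}(F(\eta)) = \varphi_{\mathscr{H}}(\omega),
\end{align*}
where the middle equality uses the intertwining identity verified above. Since $\varphi$ was an arbitrary conformal automorphism of $\D$, this is precisely the conformal invariance of $\omega$.

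There is no real obstacle: the only substantive point is the pointwise identity $F\circ\varphi_{\mathscr{L}} = \varphi_{\mathscr{H}}\circ F$, and this is a tautology once one unwinds the definitions and uses that $\varphi$ is a homeomorphism (so that crossings are preserved under the pushforward). One minor point to be careful about is that $\varphi_{\mathscr{L}}$ genuinely maps $\mathscr{L}_\D$ into itself---this is immediate because conformal automorphisms of $\D$ are bi-Lipschitz on compact subsets of the interior, so the diameter-accumulation condition in Section~\ref{s.metric_on_loop_coll} is preserved; and the same conformal automorphism induces a well-defined map on $\mathscr{H}_{\mathcal{A}_\D}$ because it preserves the hereditary condition~\eqref{e.hereditary}.
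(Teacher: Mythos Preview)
Your proposal is correct and follows essentially the same approach as the paper: both unwind the definitions to establish the intertwining identity $\varphi_{\mathscr{H}}\circ F = F\circ\varphi_{\mathscr{L}}$ (the paper does this directly on $\eta$ rather than for a general $L$, but the chain of equalities is the same), then invoke the conformal invariance of $\eta$ together with the continuity of $F$ from Lemma~\ref{l.F_cts}. One small remark: your justification that $\varphi_{\mathscr{L}}$ preserves the diameter-accumulation condition via a bi-Lipschitz bound is not quite right for loops reaching $\partial\D$, but uniform continuity of $\varphi$ and $\varphi^{-1}$ on the compact set $\D$ (via Carath\'eodory extension) suffices.
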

\begin{proof}
Fix any automorphism $\varphi$ and any annulus $A$. Using the definitions of $\varphi_{\mathscr{H}}$, $\varphi_{\mathscr{L}}$, and $F$ again, we have the following identities:
\begin{align*}
    \Ll\{A\in \varphi_{\mathscr{H}}(\omega) \Rr\} = \Ll\{\varphi^{-1}(A)\in\omega \Rr\} = \Ll\{\text{$\varphi^{-1}(A)$ is crossed by some $l\in \eta$}\Rr\} 
    \\
    = \Ll\{\text{$A$ is crossed by some $l\in \varphi_{\mathscr{L}}(\eta)$}\Rr\} = \Ll\{A \in F\Ll(\varphi_{\mathscr{L}}(\eta)\Rr) \Rr\}.
\end{align*}
Therefore, we have $\varphi_{\mathscr{H}}(\omega) = F\Ll(\varphi_{\mathscr{L}}(\eta)\Rr)$, which along with the conformal invariance of $\eta$ and the continuity of $F$ by Lemma~\ref{l.F_cts} implies the conformal invariance of $\omega$.
\end{proof}

\section{Identification of limits}\label{s.id_limits}

Recall that, for each $\delta>0$, $(\eta_\delta,\omega_\delta)$ are random percolation configurations sampled from $\P^\emptyset_{\D_\delta}$. Recall the event $\circledcirc_A^{\eta_\delta}$ defined in~\eqref{e.circledcirc_A^s}. We define
\begin{align*}
    \eta_\delta^\cro =\Ll\{A \in \mathcal{A}_\D:\: \text{$\circledcirc_A^{\eta_\delta}$ happens}\Rr\}
\end{align*}
and $\omega_\delta^\cro$ in the same way. It is straightforward to see that every realizations of $\eta^\cro_\delta$ and $\omega^\cro_\delta$ are hereditary as defined in~\eqref{e.hereditary}, which is equivalent to $\eta^\cro_\delta, \omega^\cro_\delta \in \mathscr{H}_{\mathcal{A}_\D}$. In this notation, we can rewrite the crossing event as
\begin{align}\label{e.equiv_cross_A}
    \circledcirc_A^{\eta_\delta} = \{\eta_\delta^\cro \in \circledcirc_A\}=\{A \in \eta^\cro_\delta\}
\end{align}
and similarly for $\omega_\delta$, where $\circledcirc_A\subset \mathscr{H}_{\mathcal{A}_\D}$ is given in~\eqref{e.def_open_sets}.

Throughout this section, for each $\delta>0$, we denote by $\eta^\lop_\delta$ some loop decomposition of $\eta_\delta$ whose existence is given by Lemma~\ref{l.kappa^lop_loop_decomposition} (see Remark~\ref{r.eta^lop}).
Weaker uniqueness holds for $\eta_\delta^\lop$ as described in Lemma~\ref{l.lop_unique}.
We first record a basic result.

\begin{lemma}\label{l.eta^cro=F(eta^lop)}
For every $\delta>0$ and every realization $\eta_\delta$, it holds that $\eta^\cro_\delta = F(\eta^\lop_\delta)$ for $F$ defined in~\eqref{e.F(L)}.
\end{lemma}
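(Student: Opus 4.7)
The plan is to prove the set equality by showing both inclusions directly from the defining property~\eqref{e.kappa_e=1<=>...} of a loop decomposition and the disjointness of its loops. The key preliminary observation is that, viewing everything as subsets of $\R^2$, the loops in $\eta^\lop_\delta$ are pairwise disjoint (by the definition of a loop decomposition) and their union is $\eta_\delta$ (by~\eqref{e.kappa_e=1<=>...}, since $\eta_\delta$ equals the union of its open edges). Each loop $l \in \eta^\lop_\delta$ is compact and connected as the image of a continuous map from $\S^1$. Hence the connected components of $\eta_\delta$ are precisely the loops of $\eta^\lop_\delta$.

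For the inclusion $F(\eta^\lop_\delta) \subset \eta^\cro_\delta$: if $A \in F(\eta^\lop_\delta)$, then by~\eqref{e.F(L)} some $l \in \eta^\lop_\delta$ crosses $A$, so $l \cap A$ contains a compact connected subset $S$ intersecting both $\partial_0 A$ and $\partial_1 A$. Since $l \subset \eta_\delta$, this $S$ lies in $\eta_\delta \cap A$, so $\circledcirc_A^{\eta_\delta}$ holds, i.e.\ $A \in \eta^\cro_\delta$.

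For the reverse inclusion $\eta^\cro_\delta \subset F(\eta^\lop_\delta)$: if $A \in \eta^\cro_\delta$, then by definition of $\circledcirc_A^{\eta_\delta}$ there is a compact connected subset $S \subset \eta_\delta \cap A$ meeting both $\partial_0 A$ and $\partial_1 A$. The connected set $S$ is contained in a single connected component of $\eta_\delta$, which by the preliminary observation is some loop $l \in \eta^\lop_\delta$. Then $S \subset l \cap A$, so $l$ crosses $A$, giving $A \in F(\eta^\lop_\delta)$.

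I expect no genuine obstacle here; the whole lemma is an unpacking of definitions. The one point that must be made explicitly is the passage from pairwise disjointness of the loops (as subsets of $\R^2$) to the statement that connected subsets of $\eta_\delta$ lie inside a single loop. This is where~\eqref{e.kappa_e=1<=>...} and the disjointness clause in the definition of a loop decomposition enter; everything else is immediate from the definition of $F$ in~\eqref{e.F(L)} and of $\circledcirc_A^{\eta_\delta}$ in~\eqref{e.circledcirc_A^s}.
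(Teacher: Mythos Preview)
Your proof is correct and follows essentially the same approach as the paper: both directions use the disjointness of the loops together with property~\eqref{e.kappa_e=1<=>...} to conclude that any connected subset of $\eta_\delta$ lies in a single loop of $\eta^\lop_\delta$, and then appeal to the definitions of $F$ and $\circledcirc_A^{\eta_\delta}$. Your preliminary observation that the connected components of $\eta_\delta$ are exactly the loops makes this step slightly more explicit than the paper, but the argument is the same.
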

\begin{proof}
Let $A$ be any annulus in $\mathcal{A}_\D$. Assuming $A\in \eta^\cro_\delta$, we have that $A$ is crossed by a connected set $S$ consisting of open edges in $\eta_\delta$. By definition, $\eta^\lop_\delta$ consists of disjoint loops. Also, the property in~\eqref{e.kappa_e=1<=>...} ensures that all the open edges are on the loops in $\eta^\lop_\delta$. Hence, the connectedness of $S$ implies that $S\subset l$ for some $l\in \eta^\lop_\delta$. So $A$ is crossed by $l$ and thus $A\in F(\eta^\lop_\delta)$, showing that $\eta^\cro_\delta \subset F(\eta^\lop_\delta)$.

Now, assuming $A\in F(\eta^\lop_\delta)$, we have some $l\in\eta^\lop_\delta$ crossing $A$. Then, the event $\circledcirc_A^{\eta_\delta}$ happens, which along with~\eqref{e.equiv_cross_A} yields $A\in \eta^\cro_\delta$, giving $ F(\eta^\lop_\delta)\subset \eta^\cro_\delta$. Hence, the proof is complete.
\end{proof}

We view $\eta^\cro_\delta$ and $\omega^\cro_\delta$ as $\mathscr{H}_{\mathcal{A}_\D}$-valued random variables and $\eta^\lop_\delta$ as a $\mathscr{L}_\D$-valued random variable. Since they are induced by $(\eta_\delta, \omega_\delta)$, they are naturally coupled. 

\subsection{Convergence of annulus-crossing events}

For random variables $(X_\delta)_{\delta>0}$ and $X_0$ taking value in a common metric space $\mathscr{X}$, $X_\delta$ is said to \textbf{converge in distribution in $\mathscr{X}$} to $X_0$ if the probability distribution induced by $X_\delta$ on $\mathscr{X}$ converges weakly to that of $X_0$. 

Henceforth, for each $\delta>0$, we consider the $\mathscr{L}_\D \times \mathscr{H}_{\mathcal{A}_\D}\times \mathscr{H}_{\mathcal{A}_\D}$-valued random variable $(\eta^\lop_\delta,\eta^\cro_\delta,\omega^\cro_\delta)$ induced by $\P^\emptyset_\D$. We often assume that it converges in distribution, possibly along a subsequence, to some $(\eta^\lop_0,\eta^\cro_0,\omega^\cro_0)$. Skorokhod's representation theorem is used to embed these random variables on a common probability space, whose probability measure we denote by $\mathbf{P}$.
We denote by $\triangle$ the symmetric difference operator on sets.

We show that convergence in distribution implies convergence of probabilities of annulus-crossing events.

\begin{lemma}\label{l.ann-cro_char_lim}
Assume that $(\eta_\delta,\omega_\delta)$ satisfies~\ref{i.H_basics}--\ref{i.H_bdy_conn}. If $\omega^\cro_\delta$ converges in distribution in $\mathscr{H}_{\mathcal{A}_\D}$ to some $\omega^\cro_0$ as $\delta \to 0$, then for every $A\in \mathcal{A}_{\D}^{\mathrm{dya}}$,
\begin{align*}
    \lim_{\delta\to 0} \mathbf{P}\Ll(\Ll\{\omega^\cro_\delta\in \circledcirc_A\Rr\} \triangle \Ll\{\omega_0^\cro\in \circledcirc_A\Rr\}\Rr) =0.
\end{align*}
\end{lemma}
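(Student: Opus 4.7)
The plan is to reduce the symmetric-difference statement to the single-probability convergence $\mathbf{P}(\omega^\cro_\delta\in\circledcirc_A)\to\mathbf{P}(\omega^\cro_0\in\circledcirc_A)$, which, because $\mathbf{1}_{\circledcirc_A}$ is $\{0,1\}$-valued, is equivalent. I will prove this via a Portmanteau sandwich built around an approximating annulus $A^\eps$ strictly inside $A$, supplied by~\ref{i.H_eta_cross_similar}.

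By Skorokhod's representation theorem, I realize the weak convergence on one probability space as $\omega^\cro_\delta\to\omega^\cro_0$ almost surely in $\mathscr{H}_{\mathcal{A}_\D}$. Splitting the symmetric difference as $\mathbf{P}(\omega^\cro_\delta\in\circledcirc_A,\,\omega^\cro_0\notin\circledcirc_A)+\mathbf{P}(\omega^\cro_\delta\notin\circledcirc_A,\,\omega^\cro_0\in\circledcirc_A)$, the first summand tends to $0$ by dominated convergence, since $\circledcirc_A^\complement$ is a subbasic open set of $\mathbf{T}_{\mathcal{A}_\D}$ and so a.s.\ convergence puts $\omega^\cro_\delta$ into $\circledcirc_A^\complement$ eventually on $\{\omega^\cro_0\in\circledcirc_A^\complement\}$. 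Using
\[
\mathbf{P}(\omega^\cro_\delta\notin\circledcirc_A,\,\omega^\cro_0\in\circledcirc_A)=\mathbf{P}(\omega^\cro_0\in\circledcirc_A)-\mathbf{P}(\omega^\cro_\delta\in\circledcirc_A)+\mathbf{P}(\omega^\cro_\delta\in\circledcirc_A,\,\omega^\cro_0\notin\circledcirc_A),
\]
the task then reduces to the probability convergence, and the $\limsup$ direction is immediate from Portmanteau applied to the closed set $\circledcirc_A$.

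For the matching $\liminf$, I use~\ref{i.H_eta_cross_similar} together with the remark that it extends to a dense family of piecewise smooth annuli to pick $A^\eps$ lying strictly inside $A$, so that $A^\eps<_\cro A$ is strict in the sense of Section~\ref{s.orders_on_annuli}. Choosing any intermediate annulus $A'_0$ with $A^\eps<_\cro A'_0<_\cro A$ (constructed by interpolating the two pairs of boundaries), the set
\[
\mathcal{U}^\eps=\{A'\in\mathcal{A}_\D:\ A^\eps<_\cro A'<_\cro A\}\cap\{A':\ d_{\mathcal{A}_\D}(A',A'_0)<r_0\}
\]
is a non-empty open subset of $\mathcal{A}_\D$. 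The hereditary property then forces the two-sided inclusion $\circledcirc_A\subset\mathscr{V}_{\mathcal{U}^\eps}\subset\circledcirc_{A^\eps}$, because any $\mathcal{S}\ni A$ contains every $A'<_\cro A$, and any $\mathcal{S}$ hitting some $A'\in\mathcal{U}^\eps$ contains $A^\eps$ since $A^\eps<_\cro A'$. Open-set Portmanteau applied to $\mathscr{V}_{\mathcal{U}^\eps}$ then delivers $\liminf_\delta\mathbf{P}(\omega^\cro_\delta\in\circledcirc_{A^\eps})\ge\mathbf{P}(\omega^\cro_0\in\circledcirc_A)$.

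The last step is to close the gap between $\circledcirc_{A^\eps}$ and $\circledcirc_A$ for $\omega_\delta$. Using $\circledcirc_A^{\eta_\delta}\subset\circledcirc_A^{\omega_\delta}$ (from $\eta_\delta\subset\omega_\delta$), one obtains the inclusion
\[
\circledcirc_{A^\eps}^{\omega_\delta}\setminus\circledcirc_A^{\omega_\delta}\subset\bigl(\circledcirc_{A^\eps}^{\omega_\delta}\setminus\circledcirc_{A^\eps}^{\eta_\delta}\bigr)\cup\bigl(\circledcirc_{A^\eps}^{\eta_\delta}\setminus\circledcirc_A^{\eta_\delta}\bigr),
\]
whose first bracket is $o_{r,\delta}(1)$ by Lemma~\ref{l.discon_bdy} applied to $A^\eps$ (together with $\circledcirc_{A^\eps}^{\omega_\delta}\subset\{(\partial_0 A^\eps)_r\stackrel{\omega_\delta\cap A^\eps}{\longleftrightarrow}(\partial_1 A^\eps)_r\}$), and whose second bracket is $o_{\eps,\delta}(1)$ by~\ref{i.H_eta_cross_similar}. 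Sending $\delta\to 0$ and then $\eps\to 0$ yields $\liminf_\delta\mathbf{P}(\omega^\cro_\delta\in\circledcirc_A)\ge\mathbf{P}(\omega^\cro_0\in\circledcirc_A)$, which closes the argument. The main obstacle is precisely the application of Lemma~\ref{l.discon_bdy} to the piecewise smooth but non-dyadic $A^\eps$; I plan to handle it by re-running the proof of that lemma with the dense-family version of~\ref{i.H_eta_cross_similar} in place of its dyadic formulation, as permitted by the remark following the conditions.
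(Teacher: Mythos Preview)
Your sandwich $\circledcirc_A\subset\mathscr{V}_{\mathcal{U}^\eps}\subset\circledcirc_{A^\eps}$ breaks down because the annulus $A^\eps$ supplied by~\ref{i.H_eta_cross_similar} has $\partial_1 A^\eps=\partial_1 A$. Consequently $A^\eps$ does \emph{not} lie strictly inside $A$: the relation $A^\eps<_\cro A$ is not strict, and in fact the set $\{A':A^\eps<_\cro A'<_\cro A\}$ is empty. Indeed, by Lemma~\ref{l.ineq_sep} the strict order $A^\eps<_\cro A'_0$ forces $\partial_1 A^\eps$ to be strictly inside $\partial_1 A'_0$, while $A'_0<_\cro A$ forces $\partial_1 A'_0$ strictly inside $\partial_1 A$; together these contradict $\partial_1 A^\eps=\partial_1 A$. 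So your open set $\mathcal{U}^\eps$ cannot be constructed as described, and the Portmanteau liminf step collapses. This is a separate issue from the one you flag about applying Lemma~\ref{l.discon_bdy} to the non-dyadic $A^\eps$.

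The paper sidesteps both difficulties by not trying to squeeze an open set strictly between $\circledcirc_A$ and $\circledcirc_{A^\eps}$. Instead it uses Lemma~\ref{l.metric_dya}: when $d_{\mathscr{H}_{\mathcal{A}_\D}}(\omega_\delta,\omega_0)\le\mathfrak{r}(k)$ and $\omega_0\in\circledcirc_A$, the neighborhood description~\eqref{e.O^k(omega)} gives $\omega_\delta\in\mathscr{V}_{\mathcal{B}^k_A}$, so $\omega_\delta$ crosses some $A'$ with $\partial_i A'\subset(\partial_i A)_r$. This places the event $\{\omega_0\in\circledcirc_A\}\setminus\{\omega_\delta\in\circledcirc_A\}$ (up to the metric tail) inside $\{(\partial_0 A)_r\stackrel{\omega_\delta\cap A}{\longleftrightarrow}(\partial_1 A)_r\}\cap(\circledcirc_A^{\eta_\delta})^\complement$, and then Lemma~\ref{l.discon_bdy} is invoked only for the original dyadic $A$. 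If you want to repair your route, the natural move is to take $\mathcal{U}^\eps=\mathcal{B}^k_A$ rather than an interpolating family; at that point your argument essentially merges with the paper's.
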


\begin{proof}
For brevity, we abuse the notation and write $\omega_\delta = \omega^\cro_\delta$ for $\delta\geq 0$ and $\circledcirc^{\omega_0}_A = \{\omega_0^\cro\in \circledcirc_A\}$ even though no percolation configuration $\omega_0$ induces $\omega_0^\cro$. 
By Skorokhod's representation theorem, $\omega_\delta$ converges to $\omega_0$ in $\mathscr{H}_{\mathcal{A}_\D}$ a.s.\ under $\mathbf{P}$.

First, we show that
\begin{align}\label{e.limP(omega_delta,omega_0)}
    \lim_{\delta\to 0} \mathbf{P}\Ll( \circledcirc_A^{\omega_\delta}\setminus\circledcirc_A^{\omega_0} \Rr)  =0.
\end{align}
If $\limsup_{\delta\to0} \mathds{1}_{\circledcirc_A}(\omega_\delta)=1$, then $(\omega_\delta)_{\delta>0}$ has a limit in $\circledcirc_A$ since $\circledcirc_A$ is closed by the choice of the topology $\mathbf{T}_{\mathcal{A}_\D}$. Hence, $\mathds{1}_{\circledcirc_A}(\omega_0)=1$ and thus
\begin{align}\label{e.lim1-1v0}
    \lim_{\delta\to0}\Ll(\mathds{1}_{\circledcirc_A}(\omega_\delta)-  \mathds{1}_{\circledcirc_A}(\omega_0)\Rr)\vee0= 0.
\end{align}
The only other possibility is $\limsup_{\delta\to0} \mathds{1}_{\circledcirc_A}(\omega_\delta)=0$ in which case~\eqref{e.lim1-1v0} still holds. After taking the expectation,~\eqref{e.lim1-1v0} implies~\eqref{e.limP(omega_delta,omega_0)}.

We next turn to the complement bound. For $k\in\N$ to be determined, we have
\begin{align*}
    \mathbf{P}\Ll( \circledcirc_A^{\omega_0}\setminus\circledcirc_A^{\omega_\delta} \Rr) \leq \mathbf{P}\Ll( \circledcirc_A^{\omega_0}\setminus\circledcirc_A^{\omega_\delta},\, d_{\mathscr{H}_{\mathcal{A}_\D}}(\omega_\delta,\omega_0)\leq \mathfrak{r}(k)\Rr) + \mathbf{P}\Ll(d_{\mathscr{H}_{\mathcal{A}_\D}}(\omega_\delta,\omega_0)> \mathfrak{r}(k)\Rr).
\end{align*}
Since $A$ is dyadic, there is $k_A\in \N$ such that $A \in \mathcal{A}^{k_A}_\D$. Let $k\geq k_A$.
By Lemma~\ref{l.metric_dya} and~\eqref{e.O^k(omega)}, if $d_{\mathscr{H}_{\mathcal{A}_\D}}(\omega_\delta,\omega_0) \leq \mathfrak{r}(k)$, then $\omega_0\in\circledcirc_A$ implies $\omega_\delta \in \mathscr{V}_{\mathcal{B}^k_A}$. Hence, there is $A'\in \mathcal{B}^k_A$ such that $\omega_\delta \in \circledcirc_{A'}$. This gives that
\begin{align*}
     \mathbf{P}\Ll( \circledcirc_A^{\omega_0}\setminus\circledcirc_A^{\omega_\delta},\, d_{\mathscr{H}_{\mathcal{A}_\D}}(\omega_\delta,\omega_0)\leq \mathfrak{r}(k)\Rr) \leq \mathbf{P}\Ll(\circledcirc_{A'}^{\omega_0}\setminus\circledcirc_{A}^{\omega_\delta},\,\text{for some }A'\in \mathcal{B}^k_A\Rr). 
\end{align*}
Using Lemma~\ref{l.d<r,in()_r} and the definition of $\mathcal{B}^k_A$ above~\eqref{e.O^k(omega)},
for any $r>0$, we can choose $k=k(r)\geq k_A$ sufficiently large so that for all $A'\in \mathcal{B}^k_A$, we have $\partial_i A' \in (\partial_i A)_r$ for both $i=0,1$. 
Due to $\eta_\delta\subset \omega_\delta$, we also have $  \circledcirc_{A}^{\eta_\delta} \subset \circledcirc_{A}^{\omega_\delta}$. These yield
\begin{align*}
    \Ll\{\circledcirc_{A'}^{\omega_0}\setminus\circledcirc_{A}^{\omega_\delta},\,\text{for some }A'\in \mathcal{B}^k_A \Rr\}\subset \Ll\{ (\partial_0 A)_r \stackrel{\omega_\delta\cap A}{\longleftrightarrow} \Ll(\partial_1 A\Rr)_r  \Rr\}\cap \Ll(\circledcirc_A^{\eta_\delta}\Rr)^\complement.
\end{align*}

Hence, we obtain
\begin{align*}\mathbf{P}\Ll( \circledcirc_A^{\omega_0}\setminus\circledcirc_A^{\omega_\delta} \Rr) \leq \mathbf{P}\Ll(\Ll\{ (\partial_0 A)_r \stackrel{\omega_\delta\cap A}{\longleftrightarrow} \Ll(\partial_1 A\Rr)_r  \Rr\}\cap \Ll(\circledcirc_A^{\eta_\delta}\Rr)^\complement\Rr) + \mathbf{P}\Ll(d_{\mathscr{H}_{\mathcal{A}_\D}}(\omega_\delta,\omega_0)> \mathfrak{r}(k)\Rr).
\end{align*}
First, the second term on the right-hand side vanish by sending $\delta\to0$. Then, the first term on the right vanishes when $r$ is sent to $0$ as a result of Lemma~\ref{l.discon_bdy}. This shows that $\lim_{\delta\to0}\mathbf{P}\{ \circledcirc_A^{\omega_0}\setminus\circledcirc_A^{\omega_\delta} \}=0$, which together with ~\eqref{e.limP(omega_delta,omega_0)} completes the proof.
\end{proof}

\begin{corollary}\label{c.eta_ann_cross}
Assume that $(\eta_\delta,\omega_\delta)$ satisfies~\ref{i.H_basics}--\ref{i.H_bdy_conn}. If $\eta_\delta^\cro$ converges in distribution in $\mathscr{H}_{\mathcal{A}_\D}$ to some $\eta_0^\cro$ as $\delta\to0$, then, for every $A\in\mathcal{A}_{\D}^{\mathrm{dya}}$,
\begin{align*}
    \lim_{\delta\to 0} \mathbf{P}\Ll(\Ll\{\eta^\cro_\delta\in \circledcirc_A\Rr\} \triangle \Ll\{\eta^\cro_0\in \circledcirc_A\Rr\}\Rr) =0.
\end{align*}
\end{corollary}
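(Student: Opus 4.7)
The plan is to deduce Corollary~\ref{c.eta_ann_cross} from Lemma~\ref{l.ann-cro_char_lim} by feeding the pair $(\eta_\delta,\eta_\delta)$ into that lemma in place of $(\eta_\delta,\omega_\delta)$; this requires verifying that the substituted pair still satisfies the three standing hypotheses~\ref{i.H_basics}--\ref{i.H_bdy_conn}.

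Conditions~\ref{i.H_basics} and~\ref{i.H_eta_cross_similar} are statements about $\eta_\delta$ alone, so they hold unchanged under the substitution. For condition~\ref{i.H_bdy_conn}, note that by the construction of the perturbation in~\eqref{e.omega_delta} one has $\eta_\delta \subset \omega_\delta$ when viewed as subsets of $\R^2$. Consequently, for every $\delta$-disc $K \subset \D_\delta$,
\begin{align*}
\Ll\{x+[-R,R]^2 \stackrel{\eta_\delta \cap K}{\longleftrightarrow} (\partial K)_r\Rr\} \subset \Ll\{x+[-R,R]^2 \stackrel{\omega_\delta \cap K}{\longleftrightarrow} (\partial K)_r\Rr\}.
\end{align*}
Taking probabilities under $\P^\emptyset_K$, passing to the supremum over admissible $K$, and then to $\limsup_{\delta\to 0}$ and $\lim_{r\to 0}$ in this order, the bound in~\ref{i.H_bdy_conn} for $\omega_\delta$ immediately forces the analogous bound with $\eta_\delta$ in place of $\omega_\delta$. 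Hence the pair $(\eta_\delta,\eta_\delta)$ satisfies~\ref{i.H_basics}--\ref{i.H_bdy_conn}.

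With the hypothesized convergence in distribution of $\eta^\cro_\delta$ to $\eta^\cro_0$ in $\mathscr{H}_{\mathcal{A}_\D}$ now playing the role of the convergence of $\omega^\cro_\delta$ in Lemma~\ref{l.ann-cro_char_lim}, that lemma applied to the pair $(\eta_\delta,\eta_\delta)$ delivers precisely the desired vanishing of $\mathbf{P}(\{\eta^\cro_\delta\in\circledcirc_A\}\triangle\{\eta^\cro_0\in\circledcirc_A\})$ for every $A\in\mathcal{A}_\D^{\mathrm{dya}}$. No substantive obstacle is anticipated; the only step with any content is the transfer of~\ref{i.H_bdy_conn} from $\omega_\delta$ to $\eta_\delta$, which is a pure monotonicity argument based on the pointwise inclusion $\eta_\delta \subset \omega_\delta$.
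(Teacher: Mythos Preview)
Your proposal is correct and follows essentially the same approach as the paper: substitute $\eta_\delta$ for $\omega_\delta$, check that the pair $(\eta_\delta,\eta_\delta)$ still satisfies \ref{i.H_basics}--\ref{i.H_bdy_conn}, and invoke Lemma~\ref{l.ann-cro_char_lim}. Your monotonicity argument for transferring \ref{i.H_bdy_conn} from $\omega_\delta$ to $\eta_\delta$ via $\eta_\delta\subset\omega_\delta$ is exactly the detail the paper leaves implicit when it writes ``we verify.''
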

\begin{proof}
Substituting $\omega_\delta$ by $\eta_\delta$, we verify that the pair $(\eta_\delta,\eta_\delta)$ also satisfies~\ref{i.H_basics}--\ref{i.H_bdy_conn}.
Applying Lemma~\ref{l.ann-cro_char_lim} to $(\eta_\delta,\eta_\delta)$, we obtain the result.
\end{proof}
Lastly, we show that asymptotically $\eta^\cro_\delta$ and $\omega^\cro_\delta$ cross the same annuli.
\begin{lemma}\label{e.ann_cro_omega-eta}
Assume that $(\eta_\delta,\omega_\delta)$ satisfies~\ref{i.H_basics}--\ref{i.H_bdy_conn}.
Then for every $A \in \mathcal{A}_{\D}^{\mathrm{dya}}$,
\begin{align*}
    \lim_{\delta\to\infty}  \P^\emptyset_{\D_\delta}\Ll(\Ll\{\omega^\cro_\delta\in \circledcirc_A\Rr\}\triangle\Ll\{\eta^\cro_\delta\in \circledcirc_A\Rr\} \Rr)=0.
\end{align*}
\end{lemma}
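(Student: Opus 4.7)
The approach is to reduce the symmetric difference to a one-sided event and then invoke Lemma~\ref{l.discon_bdy} directly. By construction in~\eqref{e.omega_delta}, $\eta_\delta\subset\omega_\delta$ as edge sets, so any crossing of $A$ by $\eta_\delta$ is a crossing by $\omega_\delta$. Consequently $\circledcirc_A^{\eta_\delta}\subset \circledcirc_A^{\omega_\delta}$, which by the translation~\eqref{e.equiv_cross_A} means $\{\eta^\cro_\delta\in\circledcirc_A\}\subset\{\omega^\cro_\delta\in\circledcirc_A\}$. Therefore the symmetric difference reduces to
\begin{align*}
\{\omega^\cro_\delta\in \circledcirc_A\}\triangle\{\eta^\cro_\delta\in \circledcirc_A\} = \circledcirc_A^{\omega_\delta}\setminus \circledcirc_A^{\eta_\delta}.
\end{align*}

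For any $r>0$, the inclusions $\partial_0 A\subset (\partial_0 A)_r$ and $\partial_1 A\subset (\partial_1 A)_r$ give the trivial bound
\begin{align*}
\circledcirc_A^{\omega_\delta}\subset \Ll\{(\partial_0 A)_r\stackrel{\omega_\delta\cap A}{\longleftrightarrow}(\partial_1 A)_r\Rr\},
\end{align*}
so that
\begin{align*}
\circledcirc_A^{\omega_\delta}\setminus \circledcirc_A^{\eta_\delta} \ \subset\ \Ll\{(\partial_0 A)_r\stackrel{\omega_\delta\cap A}{\longleftrightarrow}(\partial_1 A)_r\Rr\}\cap \Ll(\circledcirc_A^{\eta_\delta}\Rr)^\complement.
\end{align*}
Since $A$ is dyadic, the hypotheses~\ref{i.H_basics}--\ref{i.H_bdy_conn} allow us to apply Lemma~\ref{l.discon_bdy}, which says exactly that the probability of the right-hand event goes to zero when we first send $\delta\to0$ and then $r\to0$. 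Hence
\begin{align*}
\limsup_{\delta\to0}\P^\emptyset_{\D_\delta}\Ll(\circledcirc_A^{\omega_\delta}\setminus\circledcirc_A^{\eta_\delta}\Rr)\leq \lim_{r\to0}\limsup_{\delta\to0}\P^\emptyset_{\D_\delta}\Ll(\Ll\{(\partial_0 A)_r\stackrel{\omega_\delta\cap A}{\longleftrightarrow}(\partial_1 A)_r\Rr\}\cap \Ll(\circledcirc_A^{\eta_\delta}\Rr)^\complement\Rr)=0,
\end{align*}
which yields the claim (the statement of the lemma has a typographical $\delta\to\infty$ which should read $\delta\to0$).

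\textbf{Main obstacle.} There is essentially no obstacle once one observes that $\eta_\delta\subset\omega_\delta$ makes the symmetric difference one-sided. All the work has already been absorbed into Lemma~\ref{l.discon_bdy}, whose proof via the exploration process in Section~\ref{s.exp_loop_from_outside} is the genuinely delicate step. The present lemma is just the clean packaging that will be used in the next subsection to identify the limits of $\eta^\cro_\delta$ and $\omega^\cro_\delta$.
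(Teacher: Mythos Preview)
Your proof is correct and follows essentially the same approach as the paper: reduce the symmetric difference to $\circledcirc_A^{\omega_\delta}\setminus\circledcirc_A^{\eta_\delta}$ via the inclusion $\eta_\delta\subset\omega_\delta$, observe this is contained in the event of Lemma~\ref{l.discon_bdy} for any $r>0$, and conclude. The paper's proof is just a two-sentence version of what you wrote.
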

\begin{proof}
Since $\omega_\delta$ and $\eta_\delta$ are coupled and $\omega_\delta \supset \eta_\delta$, 
the symmetric difference is equal to $\circledcirc_A^{\omega_\delta}\setminus \circledcirc_A^{\eta_\delta}$.
Note that $\circledcirc_A^{\omega_\delta}\setminus \circledcirc_A^{\eta_\delta}$ is a subset of the event in~\eqref{e.limlimsupP<->} for any $r>0$. The desired result follows from Lemma~\ref{l.discon_bdy}.
\end{proof}

\subsection{Proof of Result~\ref{resultA}}
 
For two random variables $X_1$ and $X_2$ possibly on different probability spaces, a \textbf{coupling} of $X_1$ and $X_2$ is a pair $(Y_1,Y_2)$ of random variables defined on a common probability space such that $Y_i \stackrel{\d}{=} X_i$ for $i\in\{1,2\}$.

Recall the definitions of the function $F$ in~\eqref{e.F(L)} and its restriction $\hat F$ in~\eqref{e.F_biject}.

The following is the precise restatement of Result~\ref{resultA}, which is slightly stronger since it gives the joint convergence of all three random variables.

\begin{theorem}\label{t.(eta,omega)_id_lim}
Assume that $(\eta_\delta,\omega_\delta)$ satisfies~\ref{i.H_basics}--\ref{i.H_bdy_conn}.
If $\eta_\delta^\lop$ converges in distribution in $\mathscr{L}_\D$ to some $\eta_0^\lop$, then $(\eta^\lop_\delta,\eta^\cro_\delta,\omega_\delta^\cro)$ converges in distribution in $\mathscr{L}_\D\times\mathscr{H}_{\mathcal{A}_\D}\times \mathscr{H}_{\mathcal{A}_\D}$ to $(\eta^\lop_0,F(\eta^\lop_0),F(\eta^\lop_0))$. 

If, in addition, $\eta^\lop_0\in\mathscr{L}_\D^\good$ a.s., then this gives a coupling between $\eta^\lop_0$ and the limit $\omega^\cro_0$ of $\omega^\cro_\delta$ such that $\omega^\cro_0 = \hat F(\eta^\lop_0)$ and $\hat F^{-1}(\omega^\cro_0) = \eta^\lop_0$ a.s.

\end{theorem}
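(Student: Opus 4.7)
The plan is to upgrade the hypothesis first to a joint convergence of the pair $(\eta^\lop_\delta,\eta^\cro_\delta)$, then use compactness to extract subsequential limits of the full triple, and finally identify every subsequential limit through its values on dyadic annulus-crossing events.

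First, by Lemma~\ref{l.eta^cro=F(eta^lop)} one has the deterministic identity $\eta^\cro_\delta = F(\eta^\lop_\delta)$; combined with the continuity of $F$ from Lemma~\ref{l.F_cts} and the continuous mapping theorem, this gives the joint convergence $(\eta^\lop_\delta,\eta^\cro_\delta) \Rightarrow (\eta^\lop_0,F(\eta^\lop_0))$ in $\mathscr{L}_\D\times\mathscr{H}_{\mathcal{A}_\D}$. Since $\mathscr{H}_{\mathcal{A}_\D}$ is compact by Lemma~\ref{l.basics_topology}, the family $\omega^\cro_\delta$ is automatically tight, so the triple $(\eta^\lop_\delta,\eta^\cro_\delta,\omega^\cro_\delta)$ is tight in the product space. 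Along any subsequence I extract a further subsequence on which the triple converges in distribution to a limit, which must take the form $(\eta^\lop_0,F(\eta^\lop_0),\omega^\cro_0)$ by the joint convergence of the first two coordinates. Skorokhod's representation theorem then realizes this convergence almost surely on a common probability space equipped with a measure $\mathbf{P}$.

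The crux is then to identify $\omega^\cro_0$ with $F(\eta^\lop_0)$ almost surely. For each fixed dyadic annulus $A \in \mathcal{A}^{\mathrm{dya}}_\D$, Lemma~\ref{e.ann_cro_omega-eta} controls $\{\omega^\cro_\delta\in\circledcirc_A\}\triangle\{\eta^\cro_\delta\in\circledcirc_A\}$, Lemma~\ref{l.ann-cro_char_lim} controls $\{\omega^\cro_\delta\in\circledcirc_A\}\triangle\{\omega^\cro_0\in\circledcirc_A\}$, and Corollary~\ref{c.eta_ann_cross} applied to the established convergence $\eta^\cro_\delta \Rightarrow F(\eta^\lop_0)$ controls $\{\eta^\cro_\delta\in\circledcirc_A\}\triangle\{F(\eta^\lop_0)\in\circledcirc_A\}$; the triangle inequality for symmetric differences then yields $\mathbf{P}(\{\omega^\cro_0\in\circledcirc_A\}\triangle\{F(\eta^\lop_0)\in\circledcirc_A\})=0$. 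To upgrade this pointwise-in-$A$ coincidence to almost-sure equality of the two random variables, I let $\mathcal{D}$ denote the class of Borel $B\subset\mathscr{H}_{\mathcal{A}_\D}$ for which $\{\omega^\cro_0\in B\}=\{F(\eta^\lop_0)\in B\}$ almost surely; $\mathcal{D}$ is a $\sigma$-algebra, contains every $\circledcirc_A$ with $A\in\mathcal{A}^{\mathrm{dya}}_\D$, and therefore by Lemma~\ref{l.basics_topology} coincides with the entire Borel $\sigma$-algebra. Fixing a countable basis of the compact metrizable space $\mathscr{H}_{\mathcal{A}_\D}$ and intersecting a countable family of full-probability events then gives $\omega^\cro_0 = F(\eta^\lop_0)$ almost surely. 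As every subsequential limit of the triple is therefore determined by $\eta^\lop_0$, the full sequence converges to $(\eta^\lop_0,F(\eta^\lop_0),F(\eta^\lop_0))$.

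For the second half of the theorem, the additional hypothesis $\eta^\lop_0\in\mathscr{L}^\good_\D$ almost surely ensures $F(\eta^\lop_0) = \hat F(\eta^\lop_0)$ by the definition of $\hat F$, so the already-proved identity $\omega^\cro_0 = F(\eta^\lop_0)$ gives $\omega^\cro_0 = \hat F(\eta^\lop_0)$ almost surely; applying the bijectivity from Proposition~\ref{p.cts_bijection} yields $\hat F^{-1}(\omega^\cro_0) = \eta^\lop_0$ almost surely. The main obstacle I anticipate is the identification step: passing from the individual symmetric-difference estimates at each dyadic annulus to almost-sure equality of two $\mathscr{H}_{\mathcal{A}_\D}$-valued random variables. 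This is where it is essential to combine the countable measurable generation in Lemma~\ref{l.basics_topology} with second-countability of the compact metrizable space $\mathscr{H}_{\mathcal{A}_\D}$, and to carefully separate the quantifiers by moving from \emph{``for each $A$, almost surely''} to \emph{``almost surely, for all $A$''} via a countable intersection.
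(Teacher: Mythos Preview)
Your proposal is correct and follows essentially the same route as the paper: tightness via compactness of $\mathscr{H}_{\mathcal{A}_\D}$, Skorokhod embedding, the identity $\eta^\cro_\delta=F(\eta^\lop_\delta)$ together with continuity of $F$, the triangle inequality on symmetric differences of $\circledcirc_A$-events for each dyadic $A$ using Lemmas~\ref{l.ann-cro_char_lim}, \ref{e.ann_cro_omega-eta} and Corollary~\ref{c.eta_ann_cross}, and finally the countable generation of the Borel $\sigma$-algebra from Lemma~\ref{l.basics_topology} to pass to almost-sure equality. Your $\mathcal{D}$-class argument followed by the countable-basis step is a bit more elaborate than needed (since $\mathcal{A}^{\mathrm{dya}}_\D$ is already countable and, by generating the Borel $\sigma$-algebra of a Polish space, already separates points), but it is not wrong and the paper's own proof simply compresses this into a one-line appeal to Lemma~\ref{l.basics_topology}.
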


\begin{proof}
We show that for every subsequence of $\{\delta\}$ there is a further subsequence along which $(\eta_\delta^\lop,\eta_\delta^\cro, \omega_\delta^\cro)$ converges to the desired limit.
By the convergence of $\eta^\lop_\delta$ and the compactness of $\mathscr{H}_{\mathcal{A}_\D}$ as in Lemma~\ref{l.basics_topology}, the marginal laws of $(\eta_\delta^\lop,\eta_\delta^\cro, \omega_\delta^\cro)$ are tight. Since marginal tightness implies joint tightness,
for any subsequence of $(\delta)$, there is a further subsequence along which $(\eta_\delta^\lop,\eta_\delta^\cro, \omega_\delta^\cro)$ converges to some $(\eta_0^\lop,\eta_0^\cro, \omega_0^\cro)$. 
By Skorokhod's representation theorem, this convergence can be improved to the almost-sure convergence under $\mathbf{P}$.

This convergence along with the identity $\eta_\delta^\cro=F(\eta^\lop_\delta)$ due to Lemma~\ref{l.eta^cro=F(eta^lop)} and the continuity of $F$ in Lemma~\ref{l.F_cts} implies that $\eta^\cro_0 = F(\eta^\lop_0)$ a.s.
For every $A \in \mathcal{A}_{\D}^{\mathrm{dya}}$, we have
\begin{align*}
    \mathbf{P}\Ll(\Ll\{\omega^\cro_0\in \circledcirc_A\Rr\} \triangle \Ll\{\eta^\cro_0\in \circledcirc_A\Rr\}\Rr) 
    &\leq \mathbf{P}\Ll(\Ll\{\omega^\cro_0\in \circledcirc_A\Rr\} \triangle \Ll\{\omega^\cro_\delta\in \circledcirc_A\Rr\}\Rr) 
    \\
    &+ \mathbf{P}\Ll(\Ll\{\omega^\cro_\delta\in \circledcirc_A\Rr\} \triangle \Ll\{\eta^\cro_\delta\in \circledcirc_A\Rr\}\Rr) 
    \\
    &+ \mathbf{P}\Ll(\Ll\{\eta^\cro_\delta\in \circledcirc_A\Rr\} \triangle \Ll\{\eta^\cro_0\in \circledcirc_A\Rr\}\Rr).
\end{align*}
Using Lemma~\ref{l.ann-cro_char_lim}, Lemma~\ref{e.ann_cro_omega-eta}, and Corollary~\ref{c.eta_ann_cross} to bound the terms on the right and sending $\delta\to0$, we obtain
\begin{align*}
    \mathbf{P}\Ll(\Ll\{\omega^\cro_0\in \circledcirc_A\Rr\} \triangle \Ll\{\eta^\cro_0\in \circledcirc_A\Rr\}\Rr) =0,\quad\forall A \in \mathcal{A}_{\D}^{\mathrm{dya}}.
\end{align*}
Then, the second part in Lemma~\ref{l.basics_topology} implies that $\omega_0^\cro =\eta_0^\cro$ a.s. 
Therefore, we conclude that $(\eta_0^\lop,\eta_0^\cro, \omega_0^\cro) = (\eta^\lop_0,F(\eta^\lop_0),F(\eta^\lop_0))$ a.s., proving the main part of the theorem.
The remaining follows from the bijectivity of $\hat F$ in Proposition~\ref{p.cts_bijection} and the definition of $\hat F$ as the restriction of $F$ to $\mathscr{L}^\good_\D$.
\end{proof}

\section{Application to the random current}\label{s.app_random_current}

\subsection{Setting and basics}
Fix any $\delta>0$.
We describe various random variables on $\D_\delta$ and its dual disc.
For simplicity, we often omit the subscript $\delta$ in the following.

\subsubsection{Random currents}
Let
\begin{align}\label{e.beta_c}
    \bbeta_\crit = \frac{1}{2}\log\Ll(1+\sqrt{2}\Rr)
\end{align}
be the critical temperature for the two-dimensional Ising model.

A \textbf{current} $\bn = (\bn_e)_{e\in\EE(\D_\delta)}:  (\D_\delta)^\EE\to\N\cup\{0\}$ is a nonnegative-integer valued function on the edges.
The \textbf{source} of $\bn$ is defined to be the set $\partial\bn = \{x\in \VV(\D_\delta) : \sum_{y:\:y\sim x}\bn_{xy}\text{ is odd}\}$.
We define the \textbf{trace} $\hat\bn = (\hat\bn_e)_{e\in \EE(\D_\delta)}$ of $\bn$ by setting $\hat\bn_e = \mathds{1}_{\bn_e \neq 0}$ which is a percolation configuration. To each $\bn$, we assign the weight
\begin{align*}
    w(\bn) = \prod_{e\in \EE(\D_\delta)}\frac{\bbeta_\crit^{\bn_e}}{\bn_e !},
\end{align*}
which induces a probability measure
\begin{align*}
    \mathbf{P}^\emptyset_{\D_\delta} \Ll(\bn \in \mathcal{E}\Rr) = \frac{\sum_{\bn:\:\bn\in\mathcal{E},\,\partial\bn=\emptyset}w(\bn)}{\sum_{\bn:\:\partial\bn=\emptyset}w(\bn)}
\end{align*}
on sourceless currents. We call the random variable $\bn$ under $\mathbf{P}^\emptyset_{\D_\delta}$ the \textbf{(critical) sourceless random current (on $\D_\delta$)}. Random current arises in an expansion of correlations in the Ising model on $\VV(\D_\delta)$, and it is a powerful tool in the study of the Ising model. We refer to \cite{duminil2016random} and the references therein for more details.

\subsubsection{High-temperature expansion}
The \textbf{high-temperature expansion} gives rise to a percolation configuration $\eta=(\eta_e)_{e\in \EE(\D_\delta)}$ with $\eta_e = \mathds{1}_{\{\bn_e \text{ is odd}\}}$. Recall the definition of sources of percolation configurations in~\eqref{e.source}, which shows that $\partial_{\D_\delta}\eta = \partial \bn$.
We denote the pushforward of $\mathbf{P}^\emptyset_{\D_\delta}$ under the map $\bn\mapsto \eta$ by $\P^\emptyset_{\D_\delta}$, which admits the representation
\begin{align}\label{e.eta_weight}
    \P^\emptyset_{\D_\delta} \Ll(\eta \in \mathcal{E}\Rr) = \frac{\sum_{\eta:\:\eta\in\mathcal{E},\,\partial_{\D_\delta}\eta=\emptyset}w(\eta)}{\sum_{\eta:\:\partial_{\D_\delta}\eta=\emptyset}w(\eta)},
\end{align}
where 
\begin{align*}
    w(\eta) = \prod_{e\in \EE(\D_\delta):\eta_e=1}\tanh(\bbeta_\crit).
\end{align*}

\subsubsection{Another coupling of $\eta$ and $\hat\bn$}

Recall from Section~\ref{s.bernoulli_pert} the notation of the Bernoulli perturbation $\bb^{\bt_\delta}_\delta$ and the short-hand $\bb^t_\delta$ for constant $t$. In this part, we write them as $\bb^\bt$ and $\bb^t$ for brevity and assume they are independent from $\eta$ and $\bn$.
Recall the notation in~\eqref{e.aveeb}. We write $ \eta\vee \bb^t  = (\eta_e\vee\bb^t_e)_{e\in \EE(\D_\delta)}$. 
By \cite[Theorem~3.2]{aizenman2019emergent}, we have that
\begin{align}\label{e.eta_vee_b}
    \eta\vee \bb^{t_\crit}\stackrel{\d}{=}\hat\bn
\end{align}
for $t_\crit = 1-\frac{1}{\cosh(\bbeta_\crit)}$, where $\bbeta_\crit$ is given in~\eqref{e.beta_c}.
By extending the probability space, we assume that $\eta$, $\hat\bn$, and $\bb^{\bt}$ are coupled under $\P^\emptyset_{\D_\delta}$ for every $\bt\in [0,1]^{\E(D_\delta)}$.

The coupling~\eqref{e.eta_vee_b} is related to the one between $\eta$ and the FK--Ising random cluster model proved in \cite{grimmett2009random} and the one between $\hat\bn$ and the FK--Ising presented in \cite{lupu2016note}.

\subsubsection{Ising model on the dual domain}\label{s.Ising_dual_graph}

Recall the definition of dual domain in Section~\ref{s.dual_graphs}. Let $\D^*_\delta$ be the dual domain of $\D_\delta$. A \textbf{spin configuration} on $\D_\delta^*$ is an element $\sigma^*= (\sigma^*_x)_{x\in \VV(\D_\delta^*)}$ in $\{-1,1\}^{\VV(\D^*_\delta)}$. Let the weight of $\sigma^*$ be given by
\begin{align}\label{e.Ising_weight}
    w(\sigma^*) =\exp\Ll(\bbeta_\crit \sum_{x y\in\EE(\D^*_\delta) } \sigma^*_x \sigma^*_y\Rr)
\end{align}
and set $\Omega^+ = \{\sigma^*:\:\sigma^*_x=+1,\,\forall x\in\partial \D^*_\delta\}$.
We define
\begin{align*}
    \P^+_{\D^*_\delta}\Ll\{\sigma^* \in \mathcal{E}\Rr\} = \frac{\sum_{\sigma^*\in\mathcal{E}\cap\Omega_\delta^+}w(\eta)}{\sum_{\sigma^*\in\Omega_\delta^+}w(\eta)}
\end{align*}
to be the \textbf{(critical) Ising measure with $+$ boundary condition (on $\D^*_\delta$)}. In the following, $\sigma^*$ is the random variable with distribution $\P^+_{\D^*_\delta}$.

Given a spin configuration $\sigma^*$, we define a percolation configuration $\eta$ on the primal domain. For every $e\in\EE(\D_\delta)$, $\eta_e=\mathds{1}_{\sigma^*_x\neq \sigma^*_y}$ where $xy=e^*$ is the dual edge of $e$.
In other words, an edge is open in $\eta$ if and only if it is on the interface of different types of spins in $\sigma^*$.
By the Kramers--Wannier duality \cite{kramers1941statistics}, the pushforward of $\P^+_{\D^*_\delta}$ under the map $\sigma^*\mapsto\eta$ is $\P^\emptyset_{\D_\delta}$.
In the following, we write $\bn_\delta$, $\eta_\delta$, $\bn^{\bt_\delta}_{\delta}$, and $\sigma^*_\delta$ to emphasize the dependence on $\delta$.

\subsubsection{Conformal loop ensembles (CLEs)}\label{s.cle}

The CLE measures introduced in \cite{sheffield2009exploration} are the probability measures on the loop collections to describe the scaling limits of the interfaces in the planar statistical mechanics models expected to be invariant under conformal maps. They are indexed by $\kappa\in (\frac{8}{3},8]$ with the corresponding measure $\cle(\kappa)$, under which the loops locally behave like $\mathrm{SLE}(\kappa)$ curves. For $\kappa\in (\frac{8}{3},4]$, $\cle(\kappa)$ admits a loop-soup construction and characterized uniquely by the Markovian restriction property \cite{sheffield2012conformal}.

For our purpose,
we follow \cite[Section~2.11]{benoist2019scaling} to define the nested version of $\cle(\kappa)$ for $\kappa\in (\frac{8}{3},4]$.
Recall the definitions of $\mathscr{L}_\D$ and $\mathscr{L}^\good_\D$ in Sections~\ref{s.metric_on_loop_coll} and~\ref{s.coll_simple}, respectively.
Let $L_1$ be the random variable in $\mathscr{L}_\D$ sampled from $\cle(\kappa)$ on the domain $D$. Due to $\kappa\leq 4$, the loops in $L_1$ are disjoint, simple, and of level $1$ as defined in Section~\ref{s.coll_simple} and away from the boundary a.s.. Assuming that $L_k$ for some $k\in\N$ is constructed and satisfies these properties, we sample independently from $\cle(\kappa)$ inside every $l \in L_k$ and collect the new loops into $L_{k+1}$. Clearly, $L_{k+1}$ satisfies the desired properties. 

Inductively, we construct a sequence $(L_{k})_{k=1}^\infty$ of random loop collections. For every $k$, we orient the loops in $L_{k}$ according to the rule in~\eqref{e.orientation}. Lastly, we set $L^{\cle(\kappa)} = \cup_{k=1}^\infty L_k$, which implies that $L^{\cle(\kappa)}$ is a random variable in $\mathscr{L}_\D$ and is $\mathscr{L}_\D^\good$-valued a.s. We call the distribution of $L^{\cle(\kappa)}$ the \textbf{nested $\cle(\kappa)$ on $\D$}.

\subsection{Ising loops and convergence}

Let $\eta^\lop_\delta$ be given as in~\eqref{e.kappa^loop} constructed in 
Sections~\ref{s.loop_construct_1},~\ref{s.loop_construct_2} and~\ref{s.loop_construct_3}.
By Lemma~\ref{l.kappa^lop_loop_decomposition} and Remark~\ref{r.eta^lop}, $\eta^\lop_\delta$ is a loop decomposition of $\eta_\delta$.
The choice of $\eta^\lop_\delta$ here is explained in Remark~\ref{r.eta^lop}.

We want to show the convergence of $\eta^\lop_\delta$ using the results from \cite{benoist2019scaling} on the convergence of the critical Ising interface to~$\cle(3)$. To do so, we match the definitions in the construction of $\eta^\lop_\delta$ to those in \cite{benoist2019scaling} for Ising loops.

\subsubsection{Ising loops}

We recall the definitions in \cite[Section~2.5]{benoist2019scaling} on Ising loops. Note that in \cite{benoist2019scaling} the Ising interface is on the dual lattice; here, $\eta_\delta$ is on the primal lattice. We adapt the definitions therein to the primal lattice in the obvious way.

An \textbf{Ising loop} $l$ is a loop on $\D_\delta$ such that
\begin{itemize}

    \item no edge is used twice,
    \item and every edge in the loop has a $+$ spin of $\sigma^*$ on its left and a $-$ spin of $\sigma^*$ on its right. 
\end{itemize}
We set $L^\mathrm{Is}$ to be the collection of all the Ising loops.
A sequence $(y_0, y_1,\dots,y_n)$ of vertices in $\D^*_\delta$ is called a \textbf{strong} (resp.\ \textbf{weak}) \textbf{loop of $\pm$ spins} if $y_0=y_n$, $|y_i-y_{i+1}|=\delta$ (resp.\ $|y_i-y_{i+1}|_\infty=\delta$), and $\sigma^*_{y_i} = \pm1$ for all $i$.
An Ising loop is called \textbf{leftmost} (resp.\ \textbf{rightmost}) if it follows on its left (resp.\ right) side a strong loop of $+$ (resp.\ $-$) spins.

We introduce the following notation: for each $i\in\N\cup\{0\}$, let
\begin{align*}
    \textit{$s(i)=+$ and $\mathbf{s}(i)=\mathrm{left}$ if $i$ is odd; $s(i)=-$ and $\mathbf{s}(i)=\mathrm{right}$ if $i$ is even}
\end{align*}
We set $L_0=\{\partial \D_\delta\}$. Inductively, for each $i\in\N\cup\{0\}$, we define $L^\mathrm{Is}_{i+1}$ to be the collection of Ising loops $l\in L^\mathrm{Is}\setminus \cup_{j=1}^i L^\mathrm{Is}_j$ such that
\begin{itemize}
    \item $l$ is inside some $l'\in L_{i}$,
    \item there is no weak loop of $s(i)$ spins separating $l$ and $l'$.
\end{itemize}
For $i\geq 1$, Ising loops in $L^\mathrm{Is}_i$ are said to be of \textbf{level-$i$}. We set $L^{\mathrm{Is},\,\most}_i$ to be the collection of loops in $L^\mathrm{Is}_i$ that are $\mathbf{s}(i)$-most.

\subsubsection{Matching definitions}

Recall the definitions of $L^\mathrm{all}$, $L_i$, $L^\most_i$ (in Section~\ref{s.loop_construct_1}), and $L^\conc_i$ (in Section~\ref{s.loop_construct_3}) constructed for $\eta_\delta$ on $\D_\delta$.

\begin{lemma}\label{l.match_Ising_loop}
For every $i\geq 1$, we have $L^\mathrm{Is}_i = L_i$ and $L^{\mathrm{Is},\,\most}_i = L^\most_i$.
\end{lemma}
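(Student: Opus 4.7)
The plan is to prove both identities by induction on $i \ge 1$, combining the Kramers--Wannier duality between $\eta_\delta$ and $\sigma^*_\delta$ with Kesten's planar duality for site percolation on the dual lattice $\D^*_\delta$. Kramers--Wannier gives $\eta_e = \mathds{1}_{\sigma^*_x \neq \sigma^*_y}$ for $e^* = xy$, so open edges of $\eta_\delta$ are exactly the $\pm$-interface edges of $\sigma^*_\delta$, and the weakly-simple perpendicularity condition at $4$-valent primal vertices matches the non-crossing resolution implicit in the Ising-loop definition. Kesten's duality exchanges strong same-spin paths on $\D^*_\delta$ (equivalently $\kappa^*$-paths, which are monochromatic since $\kappa^* = 1$ holds only on same-spin dual edges together with the auxiliary edges of $\partial K^*$, which are all $+$) with the non-existence of weak opposite-spin separating loops.

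As a preliminary alignment, I would verify that the orientation convention in $L_i$ (CW for odd $i$, CCW for even $i$) agrees with the ``$+$ on left'' orientation of level-$i$ Ising loops. By a direct nesting argument, the ambient spin just outside any level-$i$ loop is $s(i)$: it alternates between $+$ for odd $i$ (starting from $s(1) = +$, since $\partial \D^*_\delta$ carries $+$ spins) and $-$ for even $i$. The rule ``$+$ on left'' then forces CW when $+$ is on the outside (odd $i$) and CCW when $+$ is on the inside (even $i$), agreeing with the orientation convention in the construction of $L_i$.

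For the inductive step of $L_i = L^{\mathrm{Is}}_i$, suppose the identification holds at all levels $\le i$ (base $L_0 = L^{\mathrm{Is}}_0 = \{\partial \D_\delta\}$), and take a candidate $l$ inside some $l' \in L_i = L^{\mathrm{Is}}_i$. The annular gap between $l$ and $l'$ has ambient spin $s(i+1) = -s(i)$. The defining condition $e^* \stackrel{\kappa^*}{\longleftrightarrow} (e')^*$ (for some $e \in l$, $e' \in l'$) says that some endpoint of $e^*$ is joined to some endpoint of $(e')^*$ by a $\kappa^*$-path; by monochromaticity the only compatible such connection uses the two $s(i+1)$-colored endpoints, namely the outside-endpoint of $e^*$ and the inside-endpoint of $(e')^*$, both lying in the gap, and so it produces a strong $s(i+1)$-path in the gap connecting the two sides. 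By Kesten's duality applied in this annular gap, the existence of such a strong $s(i+1)$-path is equivalent to the non-existence of a weak $s(i)$-loop separating $l$ from $l'$, which is precisely the Ising-level condition for $l \in L^{\mathrm{Is}}_{i+1}$. Hence $L_{i+1} = L^{\mathrm{Is}}_{i+1}$.

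For $L^{\most}_i = L^{\mathrm{Is},\,\most}_i$: a loop $l \in L^{\most}_i$ is by definition surrounded by a simple loop $\gamma \in L^*$, that is, a simple dual loop of $\kappa^* = 1$ edges (equivalently a simple strong same-spin dual loop) passing through every outside-endpoint of $e^*$ for $e \in l$; these endpoints all have spin $s(i)$, so $\gamma$ is a simple strong $s(i)$-loop sitting immediately outside $l$. On the Ising side, $l \in L^{\mathrm{Is},\,\most}_i$ means $l$ is $\mathbf{s}(i)$-most, i.e.\ $l$ follows on its $\mathbf{s}(i)$-side (which, by the orientation convention, is the outside of $l$: left for odd $i$ with CW orientation, right for even $i$ with CCW orientation) a strong $s(i)$-loop --- exactly the same object as $\gamma$. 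The principal technical obstacle in the proof is the careful bookkeeping of the $s(i)$/orientation alternation across levels and the verification that the $\kappa^*$-path in the inductive step really lies within the annular gap, so that Kesten's duality can be applied there; once this bookkeeping is set up, the remainder of the argument is a direct application of standard planar duality.
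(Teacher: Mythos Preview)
Your proposal is correct and follows essentially the same approach as the paper's proof: identify $L^{\mathrm{all}}$ with $L^{\mathrm{Is}}$ via the Kramers--Wannier correspondence $\eta_e = \mathds{1}_{\sigma^*_x \neq \sigma^*_y}$, then match the level conditions and the outermost conditions inductively using planar duality. The paper's own argument is very terse --- it simply asserts that ``the second bullet point of $L^{\mathrm{Is}}_i$ is equivalent to requiring no Ising interface separating $l$ and $l'$, which is the second bullet point of $L_i$'' --- whereas you spell out this equivalence as an instance of the deterministic strong-path/weak-circuit duality on $\Z^2$, and you also make the orientation bookkeeping explicit. One small terminological point: what you call ``Kesten's planar duality for site percolation'' is really just the underlying topological/combinatorial duality (a strong $+$-crossing exists iff no weak $-$-circuit separates), applied here to a fixed spin configuration rather than to a random one; it may be cleaner to phrase it that way.
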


\begin{proof}
The first bullet point in the definition of Ising loops ensures that all edges are distinct; and the second bullet point implies that Ising loops are weakly simple. Conversely, a weakly simple loop on $\eta_\delta$ with the correct orientation is an Ising loop. 
Therefore, $L^\mathrm{Is}$ is equal to $L^\mathrm{all}$ up to orientation, which is to be fixed according to~\eqref{e.orientation}.

We next show the first identity. The first bullet points in the definitions of $L_i$ and $L^\mathrm{Is}_i$ are the same. The second bullet point of $L^\mathrm{Is}_i$ is equivalent to requiring no Ising interface separating $l$ and $l'$, which is the second bullet point of $L_i$. Lastly, by definition, the loops in $L^\mathrm{Is}_i$ are oriented just as in~\eqref{e.orientation}. Hence, we conclude that $L^\mathrm{Is}_i = L_i$.

Each $l\in L^\most_i$ is surrounded by a simple loop $l'$ on the dual lattice. In the Ising setting, $\sigma^*$ has $s(i)$ spins on vertices of $l'$, which is the condition for $L^{\mathrm{Is},\,\most}_i$. Then, the second identity follows from the first identity.
\end{proof}

\subsubsection{Convergence of $\eta^\lop_\delta$}

Let $L^{\cle(3)}$ be a $\mathscr{L}^\good_\D$-valued random variable distributed as nested $\cle(3)$, which is described in Section~\ref{s.cle}. The main result \cite[Theorem~6]{benoist2019scaling} in \cite{benoist2019scaling} states that
\begin{itemize}
    \item as $\delta\to 0$, the collection of leftmost Ising loops converges in distribution to $L^{\cle(3)}$ in $\mathscr{L}_\D$;
    \item for every $\eps>0$, the following happens with probability tending to $1$ as $\delta\to0$: every Ising loop with diameter large than $\eps$ is within $\eps$ distance away, measured in $d_{\mathcal{L}_D}$ defined in~\eqref{e.d_mathcal_L}, from a leftmost Ising loop.
\end{itemize}
However, we cannot deduce the convergence of $\eta_\delta^\lop$ directly from these two results because the first gives the convergence of a smaller set of loops and the second is not sufficient to construct an optimal matching between the leftmost loops and $\eta^\lop_\delta$.

Hence, we look into the proof of \cite[Theorem~6]{benoist2019scaling} and use ingredients there to show the next lemma.

\begin{lemma}\label{l.eta^loop->CLE}
As $\delta\to0$, $\eta_\delta^\lop$ converges in distribution to $L^{\cle(3)}$ in $\mathscr{L}_\D$.
\end{lemma}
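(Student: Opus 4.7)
The plan is to decompose via the triangle inequality, writing $\eta_\delta^\lop \approx \cup_i L_i^\most \to L^{\cle(3)}$, and to handle the two comparisons separately using the bijection from Lemma~\ref{l.L^conc}\eqref{i.L^conc_biject} and the convergence machinery of~\cite{benoist2019scaling}. For the second convergence, Lemma~\ref{l.match_Ising_loop} identifies $\cup_i L_i^\most$ with the collection of $\mathbf{s}(i)$-most Ising loops at all levels, i.e.\ the collection of outmost Ising loops handled by the first bullet of~\cite[Theorem~6]{benoist2019scaling}; thus $\cup_i L_i^\most$ converges in distribution to $L^{\cle(3)}$ in $\mathscr{L}_\D$.

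For the first, more deterministic comparison, I would take the bijection $l^i \mapsto l^{i,N}$ as the matching $\pi$ in the definition~\eqref{e.d_mathscr_L} of $d_{\mathscr{L}_\D}$. Since this matching leaves no unmatched loops, $d_\pi$ reduces to $\sup_{i,\, l^i \in L_i^\most} d_{\mathcal{L}_\D}(l^i, l^{i,N})$. By Lemma~\ref{l.L^conc}\eqref{i.L^conc_inside}\eqref{i.L^conc_diam}, $l^{i,N}$ is obtained from $l^i$ by grafting, at each attachment vertex of $l^i$, a subtree of higher-level ``bubbles'' contained inside $l^i$ and preserving the outer diameter. Choosing a parametrization of $l^{i,N}$ that follows $l^i$ and detours into each subtree upon reaching its root, one checks that
\begin{align*}
    d_{\mathcal{L}_\D}\Ll(l^i, l^{i,N}\Rr)\ \leq\ \max_{x \in \VV(l^i)} \diam\Ll(\mathrm{Tree}_x\Rr),
\end{align*}
where $\mathrm{Tree}_x$ is the maximal subtree of concatenated bubbles rooted at $x$. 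Loops $l^i$ with $\diam(l^i) < \eps$ satisfy $d_{\mathcal{L}_\D}(l^i, l^{i,N}) < \eps$ automatically, since $l^i \subseteq l^{i,N}$ as subsets of $\R^2$ and they have equal diameter; so it remains to bound the subtrees for macroscopic $l^i$.

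For this last step, the second bullet of~\cite[Theorem~6]{benoist2019scaling} says that every Ising loop of diameter $\geq \eps$ lies within $d_{\mathcal{L}_\D}$-distance $\eps$ of some outmost Ising loop. Hence a single bubble of diameter $\geq \eps$ attached to $l^i$ would force an outmost Ising loop of some level $j \neq i$ to lie $\eps$-close to $l^i$; combined with the convergence from the first step and the almost-sure positive separation of distinct loops in nested $\cle(3)$, such an event has vanishing probability as $\delta \to 0$ and $\eps \to 0$. The main obstacle is that a subtree $\mathrm{Tree}_x$ of diameter $\geq \eps$ need not contain any single bubble of diameter $\geq \eps$: one might in principle have a long chain of small bubbles whose cumulative extent is macroscopic. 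To preclude such cascades, one needs a uniform quantitative separation estimate between distinct macroscopic outmost Ising loops, which has to be extracted from the RSW-type inputs underlying~\cite{benoist2019scaling} rather than only from the two statements quoted above. This quantitative separation is the technical heart of the argument.
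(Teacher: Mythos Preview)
Your overall two-step strategy ($\eta_\delta^\lop \approx \cup_i L_i^\most \to L^{\cle(3)}$) is the same as the paper's, but your first comparison contains a genuine gap that the paper avoids with one extra observation.

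The cascade problem you flag is real for your argument, and you do not resolve it; invoking ``RSW-type separation estimates'' is not a proof. The paper sidesteps the whole issue by noticing that $l^{i,N}\in L_i^\conc$ is \emph{itself} a level-$i$ Ising loop: by construction $l^{i,N}$ is a weakly simple loop in $L^{\mathrm{all}}$, it lies inside $l^i$ (Lemma~\ref{l.L^conc}\eqref{i.L^conc_inside}), and the remark after Lemma~\ref{l.L^conc} records that loops in $L_i^\conc$ are of level $i$ with respect to $L^{\mathrm{all}}$; combined with Lemma~\ref{l.match_Ising_loop} this makes $l^{i,N}$ a level-$i$ Ising loop inside the level-$i$ outmost loop $l^i$. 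Now \cite[Lemma~14]{benoist2019scaling} (the content behind the second bullet you quote) says that, with high probability, every level-$1$ Ising loop of diameter $>\eps$ inside a fixed level-$1$ leftmost loop $l^{\mathrm{L}}$ is within $d_{\mathcal{L}_\D}$-distance $\eps$ of $l^{\mathrm{L}}$. Applied directly to the pair $(l^i,l^{i,N})$ and using $\diam(l^{i,N})=\diam(l^i)$ (Lemma~\ref{l.L^conc}\eqref{i.L^conc_diam}), this yields $d_{\mathcal{L}_\D}(l^i,l^{i,N})\le\eps$ whenever $\diam(l^i)>\eps$, with probability tending to~$1$. No tree-diameter bound and no cascade analysis are needed.

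A second, smaller issue: you write that the first bullet of \cite[Theorem~6]{benoist2019scaling} gives convergence of $\cup_i L_i^\most$ directly. It does not: that bullet concerns the full collection of \emph{leftmost} Ising loops, whereas $L_i^\most$ alternates between leftmost and rightmost as $i$ changes parity. The paper handles this by following the level-by-level structure of the proof of \cite[Theorem~6]{benoist2019scaling} (its second paragraph uses the spatial Markov property to pass from level~$1$ to level~$2$, switching leftmost to rightmost), proving $L_i^\conc\to\cle(3)$ level by level and then using the uniform decay of diameters across levels to assemble the nested limit.
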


\begin{proof}
The proof follows by modifying the proof in \cite[Theorem~6]{benoist2019scaling}, which shows the convergence of level-$1$ leftmost Ising loops and then uses the spatial Markov property to reach higher levels. That proof consists of three paragraphs focusing on different aspects.

The first paragraph in \cite[Theorem~6]{benoist2019scaling} shows that $L_1^{\mathrm{Is},\,\most}$ converges in distribution in $\mathscr{L}_\D$ to $\cle(3)$.
We want to show the convergence of $L_1^\conc$.
For every $l^\conc\in L_1^\conc$, let $l^\most\in L_1^\most$ be the element given by the bijection in Lemma~\ref{l.L^conc}~\eqref{i.L^conc_biject}. 
The proof of \cite[Lemma~14]{benoist2019scaling} shows that for any fixed level-$1$ Ising loop $l^\mathrm{L}$ with $\diam(l^\mathrm{L})>\eps$, every level-$1$ Ising loop $l$ inside $l^\mathrm{L}$ with $\diam(l)>\eps$ satisfies $d_{\mathcal{L}_{\D}}(l,l^\mathrm{L})\leq \eps$, with probability tending to $1$ as $\delta\to0$.

By Lemma~\ref{l.match_Ising_loop} and Lemma~\ref{l.L^conc}~\eqref{i.L^conc_inside}, we can apply this to $l^\conc$ and $l^\most$. Then, Lemma~\ref{l.L^conc}~\eqref{i.L^conc_diam} implies that with probability tending to $1$, for every $l^\most$ that $\diam(l^\most)>\eps$, we have
\begin{align*}
    d_{\mathcal{L}_{\D}}\Ll(l^\conc,l^\most\Rr)\leq \eps.
\end{align*}
For any $\eps>0$, we build a matching by matching $l^\conc$ with $l^\most$ if $\diam(l^\most)>\eps$. Then, Lemma~\ref{l.L^conc}~\eqref{i.L^conc_diam} and~\eqref{e.d_mathscr_L} imply that
\begin{align*}
    d_{\mathscr{L}_\D}\Ll(L^\conc_1,L^\most_1\Rr) \leq \sup_{l^\most:\: \diam(l^\most)>\eps}d_{\mathcal{L}_\D}\Ll(l^\conc,l^\most\Rr) + \frac{1}{2}\eps.
\end{align*}
The proof of \cite[Lemma~4]{benoist2019scaling} shows that with high probability, the number of level-$1$ leftmost Ising loops of diameter greater than $\eps$ is bounded uniformly in $\delta$. Using this and the above two displays, we get
\begin{align*}
    \lim_{\delta\to0}\P^\emptyset_{\D_\delta}\Ll\{ d_{\mathscr{L}_\D}\Ll(L_1^\most,\, L_1^\conc\Rr)>\eps\Rr\}=0,
\end{align*}
which along with the convergence of $L_1^{\mathrm{Is},\,\most}$ and Lemma~\ref{l.match_Ising_loop} implies that $L_1^\conc$ converges to $\cle(3)$ in distribution.

As in the second paragraph in the proof of \cite[Theorem~6]{benoist2019scaling}, we argue that level-$2$ rightmost Ising loops inside any fixed level-$1$ leftmost Ising loop $l$ converges to $\cle(3)$.
Using the reasoning above again, we deduce the convergence of $L_2^\conc$.
Iteratively, we identify the limit of $L_{i+1}^\conc$ as independent $\cle(3)$ inside the level-$i$ loops.

The third paragraph in the proof of \cite[Theorem~6]{benoist2019scaling} implies that uniformly in $\delta$, the supremum of the diameters of the level-$i$ Ising loops converges to $0$ in probability as $i\to\infty$. This tightness result along with the convergence of $L_i^\conc$ for each $i$ implies that $\cup_{i=1}^N L_i^\conc$ converges in distribution in $\mathscr{L}_\D$ to $L^{\cle(3)}$, where $N$ is given in~\eqref{e.N=inf...}. The desired result follows from this and the definition of $\eta^\lop_\delta$ in~\eqref{e.kappa^loop}.
\end{proof}

\subsection{Proof of Result~\ref{resultB}}

Recall the notation of the Bernoulli perturbation in Section~\ref{s.bernoulli_pert}. In this subsection, we require $\bt_\delta$ to satisfy
\begin{align}\label{e.(t_delta)_e_in}
    (\bt_\delta)_e \in \Ll[0, t^\star\Rr], \quad\forall e \in \EE(\D_\delta),
\end{align}
for $t^\star = 1 - \Ll(\frac{1}{\cosh(\bbeta_\crit)}\Rr)^2$, and take $\omega_\delta$ as in~\eqref{e.omega_delta}.

To apply Theorem~\ref{t.(eta,omega)_id_lim}, we need to verify~\ref{i.H_basics},~\ref{i.H_eta_cross_similar}, and~\ref{i.H_bdy_conn}.

\subsubsection{Verification of~\ref{i.H_basics}}

\begin{lemma}
Condition~\ref{i.H_basics} is satisfied by $\eta_\delta$.
\end{lemma}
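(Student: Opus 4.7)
The plan is to deduce the Markov property \eqref{e.markov_prop} directly from the explicit product form of the weight in \eqref{e.eta_weight}. Since $w(\eta_\delta) = \tanh(\bbeta_\crit)^{|\{e:\eta_e=1\}|}$ factorizes over edges, for any $\delta$-disc $K\subset \D_\delta$ and any configuration $\eta$ on $\D_\delta$ we have $w(\eta) = w(\eta_K)\, w(\eta_{\D_\delta\setminus K})$, where the factors are defined by the analogous product over $\EE(K)$ and $\EE(\D_\delta\setminus K)$ respectively. Combined with the disc geometry (the same property \eqref{e.xy=>x} used in the proof of Lemma~\ref{l.explore_markov}, guaranteeing that $\EE(K)\cup\EE(\D_\delta\setminus K)=\EE(\D_\delta)$ with no vertex "leaking'' from an edge), this product structure is exactly what one needs to factor the partition function once the parity constraint is localized.

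The first step is a parity bookkeeping statement: if $\kappa\in\{0,1\}^{\EE(\D_\delta\setminus K)}$ satisfies $\partial_{\D_\delta\setminus K}\kappa=\emptyset$ and $\varsigma\in\{0,1\}^{\EE(K)}$, then the concatenated configuration $\eta$ on $\D_\delta$ with $\eta_{\D_\delta\setminus K}=\kappa$ and $\eta_K=\varsigma$ satisfies
\begin{align*}
    \partial_{\D_\delta}\eta=\emptyset \quad\Longleftrightarrow\quad \partial_K\varsigma=\emptyset.
\end{align*}
The forward direction is already contained in the proof of Lemma~\ref{l.explore_markov} (see \eqref{e.partialpartial=>partial}); the reverse is identical and follows from the fact that the parity contribution at any vertex splits into a contribution from $\EE(K)$ and one from $\EE(\D_\delta\setminus K)$, each of which is even under the respective sourceless assumption.

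The second step is to plug this equivalence into the definition of $\P^\emptyset_{\D_\delta}$ in \eqref{e.eta_weight}:
\begin{align*}
    \P^\emptyset_{\D_\delta}\bigl\{\eta_\delta\in\mathscr{E}\bigm|(\eta_\delta)_{\D_\delta\setminus K}=\kappa\bigr\}
    = \frac{\sum_{\varsigma:\varsigma\in\mathscr{E},\,\partial_K\varsigma=\emptyset} w(\kappa)\,w(\varsigma)}{\sum_{\varsigma:\partial_K\varsigma=\emptyset} w(\kappa)\,w(\varsigma)},
\end{align*}
where the sums are over $\varsigma\in\{0,1\}^{\EE(K)}$ and we used the parity equivalence from the first step to replace the global sourceless constraint by $\partial_K\varsigma=\emptyset$, and the product factorization of $w$ to split the weight. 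The common prefactor $w(\kappa)$ cancels, leaving precisely $\P^\emptyset_K\{\eta_\delta\in\mathscr{E}\}$ by \eqref{e.eta_weight}.

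There is no substantive obstacle here: once the disc geometry and the parity equivalence are in hand (both of which are essentially the content of Lemma~\ref{l.explore_markov}), the Markov property follows from the elementary fact that a product measure on a product space, conditioned on one marginal, gives the corresponding marginal on the other factor. The proof is at most a few lines.
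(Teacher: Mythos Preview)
Your argument is correct and is in fact more direct than the paper's. The paper proves the Markov property by passing through Kramers--Wannier duality: conditioning on $(\eta_\delta)_{\D_\delta\setminus K}=\kappa$ with $\partial_{\D_\delta\setminus K}\kappa=\emptyset$ forces a weak loop of constant spin in $\sigma^*$ surrounding $K$, and then the domain Markov property of the dual Ising model gives the result. You instead stay on the primal side and exploit that the high-temperature weight $w(\eta)=\tanh(\bbeta_\crit)^{|\{e:\eta_e=1\}|}$ factorizes over edges; once the sourceless constraint decouples via the parity identity $\partial_{\D_\delta}\eta=\partial_K\varsigma$ (given $\partial_{\D_\delta\setminus K}\kappa=\emptyset$), the conditional law is manifestly $\P^\emptyset_K$. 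Your route avoids any appeal to the dual spin model and makes clear that (H0) is really a consequence of the product form of $w$, whereas the paper's route highlights the connection to the Ising spatial Markov property.

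One small remark: your citation of \eqref{e.partialpartial=>partial} for the ``forward direction'' is not quite on target---that display concerns $\partial_{R\cup\hat K}\eta$ in a setting with several discs, not $\partial_K\varsigma$ in the single-disc situation you need. The actual parity statement you require (that the degree sum at every vertex splits additively over $\EE(K)$ and $\EE(\D_\delta\setminus K)$, hence $\partial_{\D_\delta}\eta=\partial_K\varsigma$ under $\partial_{\D_\delta\setminus K}\kappa=\emptyset$) is simpler and self-contained, and you do sketch it correctly; just drop the pointer to \eqref{e.partialpartial=>partial}. Also note that Lemma~\ref{l.explore_markov} itself \emph{assumes} (H0), so while borrowing its geometric ingredients (like \eqref{e.xy=>x}) is fine, citing it as a source for the Markov property would be circular---which you do not do, but the phrasing could invite that misreading.
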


\begin{proof}
We verify the Markov property~\eqref{e.markov_prop}. Assume the condition in~\eqref{e.markov_prop}. On the event that $(\eta_\delta)_{\D_\delta\setminus K} =\kappa$, due to $\partial_{\D_\delta\setminus K
} \kappa=\emptyset$ and the map $\sigma^*\mapsto \eta$ in Section~\ref{s.Ising_dual_graph}, $K$ is surrounded by a weak loop of spins of plus or minus sign on the dual graph. Then, the Markov property follows from the Krammers--Wannier duality and the Markov property of $\sigma^*$.
\end{proof}

\subsubsection{Verification of~\ref{i.H_eta_cross_similar}}

\begin{lemma}Condition~\ref{i.H_eta_cross_similar} is satisfied by $\eta_\delta$.
\end{lemma}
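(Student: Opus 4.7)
My plan is to verify~\ref{i.H_eta_cross_similar} via Kramers--Wannier duality (Section~\ref{s.Ising_dual_graph}), which identifies the law of $\eta_\delta$ under $\P^\emptyset_{\D_\delta}$ with the pushforward of the critical Ising measure with $+$ boundary conditions on $\D_\delta^*$, combined with RSW estimates for the critical two-dimensional Ising model.

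Fix $A \in \mathcal{A}_{\D}^{\mathrm{dya}}$ and construct $A^\eps$ by letting $\partial_0 A^\eps$ be a piecewise smooth simple loop contained in the interior of $A$, lying in the $\eps$-neighborhood of $\partial_0 A$ and surrounding $\partial_0 A$, and setting $\partial_1 A^\eps := \partial_1 A$. Then $B^\eps := A \setminus \mathrm{int}(A^\eps)$ is a thin annular region between $\partial_0 A$ and $\partial_0 A^\eps$ whose conformal modulus diverges as $\eps \to 0$, and the first bullet of~\ref{i.H_eta_cross_similar} is automatic. I would then show by planar duality that
\[
    \circledcirc_{A^\eps}^{\eta_\delta}\setminus\circledcirc_A^{\eta_\delta} \subset \bigl\{\exists\text{ monochromatic } \sigma_\delta^*\text{-loop around } \partial_0 A \text{ contained in } B^\eps\bigr\}.
\]
On $\neg\circledcirc_A^{\eta_\delta}$, the primal $\eta_\delta$-cluster $\mathcal{C}$ of $\partial_1 A$ inside $A$ misses $\partial_0 A$, so the outer boundary (inside $A$) of the connected component of $A\setminus\mathcal{C}$ containing $\partial_0 A$ is a dual loop around $\partial_0 A$ along which consecutive dual vertices carry equal $\sigma_\delta^*$-spin, i.e.\ a monochromatic loop $\ell^*$. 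On $\circledcirc_{A^\eps}^{\eta_\delta}$, the cluster $\mathcal{C}$ contains a primal path in $A^\eps$ from $\partial_0 A^\eps$ to $\partial_1 A$; since primal $\eta_\delta$-edges and dual monochromatic edges are never in the same lattice position, $\ell^*$ cannot cross this path. Choosing $\ell^*$ innermost, any arc of $\ell^*$ entering $A^\eps$ would either wind around $\partial_0 A^\eps$ inside $A^\eps$ and thereby cross the primal $A^\eps$-crossing (contradiction), or be contractible in $A^\eps$ and hence shortcuttable by a monochromatic dual path lying inside $B^\eps$ just inside $\partial_0 A^\eps$, contradicting innermost.

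Finally, I would invoke RSW for the critical Ising model (Chelkak--Smirnov~\cite{chelkak2012universality}, through FK--Ising RSW and Edwards--Sokal, with FKG monotonicity to accommodate the $+$ boundary conditions): the probability under $\P^\emptyset_{\D_\delta}$ that $\sigma_\delta^*$ carries a monochromatic loop inside a bulk topological annulus vanishes uniformly in $\delta$ as the conformal modulus of the annulus diverges. Applied to $B^\eps$, this yields
\[
    \lim_{\eps\to 0}\limsup_{\delta\to 0}\P^\emptyset_{\D_\delta}\bigl(\circledcirc_{A^\eps}^{\eta_\delta}\setminus\circledcirc_A^{\eta_\delta}\bigr) = 0,
\]
which is~\ref{i.H_eta_cross_similar}. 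The main obstacle is the planar-duality step: the dual monochromatic cluster bounding $\mathcal{C}$ on the $\partial_0 A$-side can a priori protrude into $A^\eps$ through points of $\partial_0 A^\eps$ where $\mathcal{C}$ does not touch it, so some care with the innermost-loop argument is needed to produce a monochromatic loop genuinely contained in $B^\eps$ rather than merely intersecting it.
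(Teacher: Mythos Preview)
Your duality step has a gap that you yourself flag but do not close: the inclusion
\[
\circledcirc_{A^\eps}^{\eta_\delta}\setminus\circledcirc_A^{\eta_\delta}\ \subset\ \bigl\{\exists\text{ monochromatic }\sigma^*_\delta\text{-circuit around }\partial_0 A\text{ in } B^\eps\bigr\}
\]
is false. Take a configuration with two disjoint $\eta_\delta$-components: one contained in $A^\eps$ joining $\partial_1 A$ to $\partial_0 A^\eps$, and another contained in $B^\eps$ joining $\partial_0 A$ to $\partial_0 A^\eps$ (both are perfectly admissible for a sourceless $\eta_\delta$). This configuration lies in $\circledcirc_{A^\eps}^{\eta_\delta}\setminus\circledcirc_A^{\eta_\delta}$ (the two components are not connected, so there is no crossing of $A$), yet the second component is a primal $\eta_\delta$-crossing of $B^\eps$, which by planar duality rules out every same-spin dual circuit in $B^\eps$. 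Your ``shortcutting'' argument breaks down exactly here: you propose to replace a contractible arc of the innermost $\ell^*$ protruding into $A^\eps$ by a monochromatic dual path running along $\partial_0 A^\eps$ inside $B^\eps$, but nothing forces the dual vertices along that stretch to carry the same spin---in the example above they are separated by the second component. (As a side remark, under the standard normalisation the conformal modulus of a thin annulus such as $B^\eps$ tends to $0$, not to $\infty$.)

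The paper's proof is built around precisely this obstruction. Rather than attempting to confine a dual circuit to $B^\eps$, it localises a \emph{three-arm event}: on $\circledcirc_{A^\eps}^{\eta_\delta}\setminus\circledcirc_A^{\eta_\delta}$ one can find a dual vertex $x$ in the thin strip near $\partial_0 A$ that lies on a macroscopic circulating loop of one spin and has a neighbouring vertex from which a weak arm of the opposite spin reaches out to a macroscopic distance. A topological (half-plane type) three-arm counting argument then gives probability $O(\delta^2)$ per candidate vertex, and summing over the $O(\eps/\delta^2)$ vertices in the strip yields $O(\eps)$. A pure RSW/circuit bound does not seem to bypass this localisation step.
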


\begin{proof}
For $s>0$, we set $\Lambda_s = [-s,s]^2$. For $s<s'$, we set $A_{s,s'}= \{s\leq |z|_\infty\leq s'\} = [-s',s']^2\setminus(-s,s)^2$. 
Since $A$ is dyadic, there is a finite sequence of points $(z_i)_{i=1}^n$ in $\R^2$ and some $\beta>0$ such that the sequence of squares $(z_i+\Lambda_\beta)_{i=1}^n$ satisfies the following:
\begin{itemize}
    \item all of them are contained in the closure of the bounded component of $\R^2\setminus A$;
    \item their interiors are disjoint;
    \item $\partial_0 A$ is the union of some of their edges.
\end{itemize}
For $\eps>0$, denote the outer boundary of $\cup_{i=1}^n (z_i+ \Lambda_{\beta+\eps})$ by $\Gamma^\eps$. For sufficiently small $\eps$, $\Gamma^\eps$ is a simple loop. Let $A^\eps$ be the annulus with $\partial_0 A^\eps =\Gamma^\eps$ and $\partial_1 A^\eps = \partial_1 A$.

Let $D_A$ be the domain with $\partial D_A = \partial_1 A$.
Then, we find $R,R'>0$ such that
\begin{align}\label{e.z+LambadasubsetDsubset}
    z_i+\Lambda_{R'}\subset D_A \subset z_i +\Lambda_{R},\quad\forall i\in\{1,\dots,n\}.
\end{align}
For $z\in \D$ and $\eps>0$, we define $M^\eps_\pm (z)$ to be the event that there is a vertex $x$ on the dual graph such that
\begin{itemize}
    \item $x\in z+A_{\beta,\beta+\eps}$;
    \item there is a loop of $\mp$ spins passing through $x$ and circulating around the annulus $z+A_{\beta,R}$;
    \item there is a weak path of $\pm$ spins inside $z+A_{\beta,R'}$ connecting a weak neighbor of $x$ to $\partial(z+\Lambda_{R'})$.
\end{itemize}
Then, we argue that
\begin{align*}
    \circledcirc_{A^\eps}^{\eta_\delta} \setminus \circledcirc_{A}^{\eta_\delta} \subset \bigcup_{\bullet \in \{-,+\}}\bigcup_{i=1}^n M^\eps_\bullet (z_i).
\end{align*}
Assuming the left-hand side, there is an Ising loop crossing $A^\eps$, but no Ising loop crossing $A$. Suppose this loop contains $\pm$ spins, then $\partial_1 A$ is connected to $\Gamma^\eps$ via a weak path of $\pm$ spins. The definition of $\Gamma_\eps$ ensures the existence of some $z_i$ such that this path connects a dual vertex $y$ in $z_i + A_{\beta,\beta+\eps}$ to $\partial_1 A$. On $\left(\circledcirc^{\eta_\delta}_A\right)^\complement$, we can choose $y$ whose neighbor $x$ has $-$ spin and $x$ is in $z_i+A_{\beta,\beta+\eps}$. Thus, $x$ is on a loop of $\mp$ spins circulating $A$, for otherwise the aforementioned path reaches $\partial_0 A$. By the choices of $R,R'$ as in~\eqref{e.z+LambadasubsetDsubset}, we see that $M^\eps_\pm(z_i)$ happens.

Recall that $A$ is away from the boundary of $\D_\delta$.
Using the above display and mixing properties of the critical Ising model (e.g.\ \cite[Corollary~5.4]{wu2018alternating}), for some constant $C$ arising from the mixing property, we have
\begin{align*}
    \P_{\D_\delta}^\emptyset\Ll(\circledcirc_{A^{\eps}}^{\eta_\delta} \setminus\circledcirc_{A}^{\eta_\delta}\Rr) \leq \sum_{\bullet\in\{+,-\}}\sum_{i=1}^n C\P\Ll(M^\eps_\bullet(z_i)\Rr),
\end{align*}
where $\P$ is the full-plane critical Ising measure on the dual graph of $\delta\Z^2$. 
Then, the desired result follows from
\begin{align}\label{e.limP(M)=0}
    \lim_{\eps\to0}\limsup_{\delta\to0}\P\Ll(M^\eps_\pm(z)\Rr) =0.
\end{align}
for each fixed $z\in\R^2$. We prove~\eqref{e.limP(M)=0} in the following.
\end{proof}

\begin{proof}[Proof of~\eqref{e.limP(M)=0}]
We assume the setting and use the notations from above.
By translation, it suffices to study the Ising model on the primal graph $\delta\Z^2$ and prove~\eqref{e.limP(M)=0} for $z=0$. Indeed, by substituting $\beta-\delta_0, R+\delta_0, R'-\delta_0$ for $\beta,R,R'$ with some sufficiently small $\delta_0$, we can recenter to the vertex $z_\delta$ nearest to $z$, and then we translate from $z_\delta$ to $0$. Additionally, we only consider $M^\eps_+(0)$, since $M^\eps_-(0)$ is treated similarly.

The basic idea is that on $M^\eps_\pm(0)$, there are alternating three arm events centered in $A_{\beta,\beta+\eps}$. The standard argument in the proof of \cite[Theorem~23]{nolin2008near} (see also \cite[First exercise sheet]{werner2007lectures}) proves that the probability of such a three arm event is $\mathcal{O}(\delta^2)$, due to topological reasons. Since $A_{\beta,\beta+\eps}$ has $\mathcal{O}(\eps/\delta^2)$ vertices, a union bound gives that the probability of $M^\eps_+(0)$ is $\mathcal{O}(\eps)$.

In this proof, we take $\delta<\frac{\eps}{10}$ small. We denote by $C$ a positive absolute constant that may vary from instance to instance. A point $z\in \R^2$ is said to be \textbf{in the corner} if $|z_1|=|z_2|$.
For $0<s_1<s_2, s_3$, we denote by $V^{s_1,s_2,s_3}_\pm$ the random set of vertices $z$ satisfying
\begin{itemize}
    \item $z$ is on a loop of $\mp$ spins circulating $A_{s_1,s_2}$;
    \item there is a weak path of $\pm$ spins connecting $\{|z'|_\infty=s_3\}$ to 
    \begin{itemize}
        
        \item a weak neighbor of $z$ if $z$ is in the corner,
        \item a neighbor of $z$, otherwise.
    \end{itemize}
\end{itemize}
For every vertex $x$, we define $E^{s_1,s_2,s_3}_{\pm,x}$ to be the event that $x$ is the minimizer of the function $z\mapsto|z|_\infty$ over $V^{s_1,s_2,s_3}_\pm$.
Comparing this with the definition of $M^\eps_+(0)$, we have
\begin{align}\label{e.P()<I_++I_-}
    M^\eps_+(0)\subset
     \bigcup_{x\in A_{\beta,\beta+\eps}}E^{\beta,R,R'}_{+,x}.
\end{align}

We fix a constant $\bar R$ sufficiently large such that $ \bigcup_{x\in A_{\beta,\beta+\eps}}E^{\beta,R,R'}_{+,x}$ depends only on the configuration inside $\Lambda_{\frac{1}{2}\bar R}$. For a subset $S$ of vertices, we write $\sigma_S = (\sigma_i)_{i\in S}$. Then, fixing any $\sigma_{\delta\Z^2\setminus \Lambda_{\bar R}}$, we set
\begin{align}\label{e.hatP}
    \hat{\P}(\,\cdot\,) = \P\Ll(\ \cdot\ \big|\sigma_{\delta\Z^2\setminus \Lambda_{\bar R}}\Rr).
\end{align}
This enables us to ``count'' configurations in relevant events. In the following paragraph, we write $\sigma = \sigma_{\delta\Z^2\cap\Lambda_{\bar R}}$.

We denote by $G^{s_1,s_2,s_3}_{+,x}$ the event that $x$ is the unique minimizer of the function $z\mapsto |z|_\infty$ over $V^{s_1,s_2,s_3}_+$. 
We choose a vertex $\tilde x$ in the following way (see Figure~\ref{fig:picHBCflip}):
\begin{itemize}
    \item if $x$ is in the corner, let $\tilde x$ be the unique weak neighbor of $x$ such that $|\tilde x_1|=|\tilde x_2|$ and $|\tilde x|_\infty = |x|_\infty -\delta$;
    \item otherwise, let $\tilde x$ be the unique neighbor of $x$ such that $|\tilde x|_\infty = |x|_\infty -\delta$.
\end{itemize}
For every $\sigma \in E^{\beta,R,R'}_{+,x}$, we flip at most four spins near $x$ (more precisely, at most two if $x$ is in the corner) to get a new configuration $\tilde \sigma \in G^{\beta-\delta,R,R'}_{+,\tilde x}$.
Note that every $G^{\beta,\delta,R,R'}_{+,\tilde x}$ only depends on the vertices in $\Lambda_{\frac{1}{2}\bar R}$.
Using the expression of weights in~\eqref{e.Ising_weight} for the Ising model, we see that each spin flip introduces a multiplicative factor bounded by $e^{8\bbeta_\crit}$ to the weight. This gives $w(\sigma)\leq e^{32\bbeta_\crit}w(\tilde \sigma)$.
Also, there are at most $16$ distinct $\sigma$ to be mapped to the same $\tilde \sigma$.
Therefore, it holds for all $x\in A_{\beta,\beta+\eps}$ that
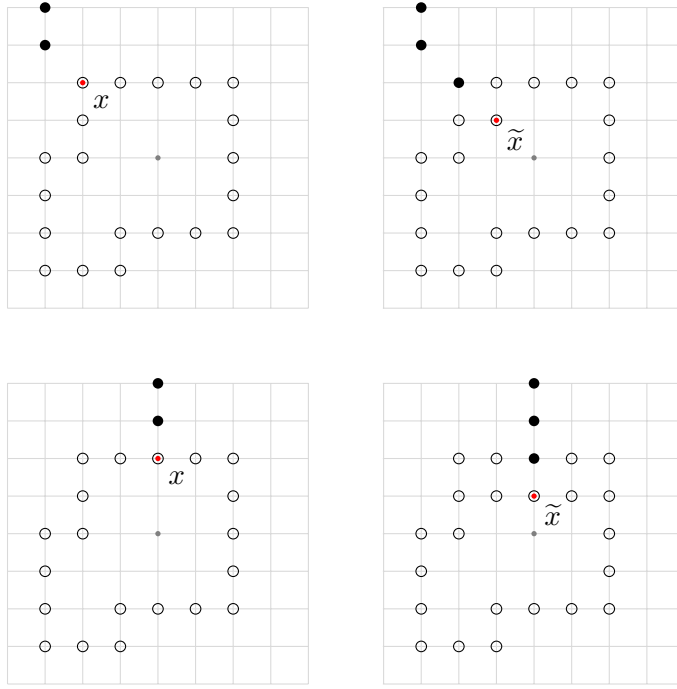
\begin{figure}
    \centering
    
\begin{tikzpicture}
\draw[black!15, step=5mm] (0,0) grid (4,4);
\draw[black!15, step=5mm] (4.99,0) grid (9,4);
\draw[black!15, step=5mm] (0,-5) grid (4,-1);
\draw[black!15, step=5mm] (4.99,-5) grid (9,-1);
\fill[gray] (2,2) circle (1pt)
(7,2) circle (1pt)
(2,-3) circle (1pt)
(7,-3) circle (1pt)
;
\foreach \i in {0.5,1,1.5}
{\draw[] (\i,0.5) circle (2pt);
} 
\foreach \i in {3,2.5,2,1.5}
{\draw[] (\i,1) circle (2pt);
}
\foreach \i in {3,2.5,2,1.5,1}
{\draw[] (\i,3) circle (2pt);
}
\foreach \j in {2,1.5,1}
{\draw[] (0.5,\j) circle (2pt);
}
\foreach \j in {2,2.5}
{\draw[] (1,\j) circle (2pt);
}
\foreach \j in {2,2.5,1.5}
{\draw[] (3,\j) circle (2pt);
}
\foreach \j in {4,3.5}
{\fill[] (0.5,\j) circle (2pt);
}
\fill[red] (1,3) circle (1pt);
\draw[] (1.25,2.75) node {$x$};

\foreach \i in {5.5,6,6.5}
{\draw[] (\i,0.5) circle (2pt);
} 
\foreach \i in {8,7.5,7,6.5}
{\draw[] (\i,1) circle (2pt);
}
\foreach \i in {8,7.5,7,6.5}
{\draw[] (\i,3) circle (2pt);
}
\foreach \j in {2,1.5,1}
{\draw[] (5.5,\j) circle (2pt);
}
\foreach \j in {2,2.5}
{\draw[] (6,\j) circle (2pt);
}
\foreach \j in {2,2.5,1.5}
{\draw[] (8,\j) circle (2pt);
}
\foreach \j in {4,3.5}
{\fill[] (5.5,\j) circle (2pt);
}
\fill[] (6,3) circle (2pt);
\draw[] (6.5,2.5) circle (2pt);
\fill[red] (6.5,2.5) circle (1pt);
\draw[] (6.75,2.25) node {$\tilde x$};

\foreach \i in {0.5,1,1.5}
{\draw[] (\i,-4.5) circle (2pt);
} 
\foreach \i in {3,2.5,2,1.5}
{\draw[] (\i,-4) circle (2pt);
}
\foreach \i in {3,2.5,2,1.5,1}
{\draw[] (\i,-2) circle (2pt);
}
\foreach \j in {-3,-3.5,-4}
{\draw[] (0.5,\j) circle (2pt);
}
\foreach \j in {-3,-2.5}
{\draw[] (1,\j) circle (2pt);
}
\foreach \j in {-3,-2.5,-3.5}
{\draw[] (3,\j) circle (2pt);
}
\foreach \j in {-1,-1.5}
{\fill[] (2,\j) circle (2pt);
}
\fill[red] (2,-2) circle (1pt);
\draw[] (2.25,-2.25) node {$x$};

\foreach \i in {5.5,6,6.5}
{\draw[] (\i,-4.5) circle (2pt);
} 
\foreach \i in {8,7.5,7,6.5}
{\draw[] (\i,-4) circle (2pt);
}
\foreach \i in {8,7.5,6.5,6}
{\draw[] (\i,-2) circle (2pt);
}
\foreach \j in {-3,-3.5,-4}
{\draw[] (5.5,\j) circle (2pt);
}
\foreach \j in {-3,-2.5}
{\draw[] (6,\j) circle (2pt);
}
\foreach \j in {-3,-2.5,-3.5}
{\draw[] (8,\j) circle (2pt);
}
\foreach \j in {-1,-1.5}
{\fill[] (7,\j) circle (2pt);
}
\foreach \i in {6.5,7,7.5}
{\draw[] (\i,-2.5) circle (2pt);
}
\fill[red] (7,-2.5) circle (1pt);
\fill[] (7,-2) circle (2pt);
\draw[] (7.25,-2.75) node {$\tilde x$};
\end{tikzpicture}

    \caption{The figures in the first row consider $x$ in the corner; the second row is for the other case.}
    \label{fig:picHBCflip}
\end{figure}
\begin{align*}
    \hat\P \Ll( E_{+,x}^{\beta,R,R'}\Rr) =\sum_{\sigma\in E^{\beta,R,R'}_{+,x}}w(\sigma) \leq \sum_{\sigma\in E^{\beta,R,R'}_{+,x}}e^{32\bbeta_\crit}w\Ll(\tilde \sigma\Rr) \leq  16e^{32\bbeta_\crit}\hat\P \Ll( G_{+,\tilde x}^{\beta-\delta,R,R'}\Rr).
\end{align*}
By construction, at most three distinct $x$ can be mapped to the same $\tilde x$ (one-to-one if $\tilde x$ is not in the corner). 
Hence, the above display along with~\eqref{e.P()<I_++I_-} and~\eqref{e.hatP} implies that
\begin{align}\label{e.I_+<CSum}
    \P\Ll(M^\eps_+(0)\Rr) \leq 48e^{32\bbeta_\crit}\sum_{x\in A_{\beta-\delta,\beta+\eps}}\P \Ll( G_{+, x}^{\beta-\delta,R,R'}\Rr).
\end{align}

We fix a constant $\gamma<10^{-1}\beta$ and consider $\eps \leq 10^{-1}\gamma$ henceforth. 
For $a, b\in \R$, we set $[a,b]_\delta = [a,b]\cap \delta \N$. 
For $r\in \N$, we set
\begin{align*}
    B^{s_1,s_2,s_3}_{+,r} = \bigcup_{x:|x|_\infty =r} G^{s_1,s_2,s_3}_{+,x}.
\end{align*}
It is useful to see that
\begin{align}\label{e.GcapG=emptyset}
    G^{s_1,s_2,s_3}_{+,x} \cap G^{s_1,s_2,s_3}_{+,x'} =\emptyset,\quad\forall x,x' \in A_{s_1,\min\{s_2,s_3\}}: x\neq x'\,.
\end{align}
For convenience, we introduce some notations. For $s\geq 0$, we set
\begin{align*}
    G^{-s}_{+,x}= G^{\beta-\gamma-s,R+s,R'-s}_{+,x},\qquad  B^{-s}_{+,r}= B^{\beta-\gamma-s,R+s,R'-s}_{+,r}.
\end{align*}
We later use the relation $G^{-s}_{+,x}\subset G^{-s'}_{+,x}$ for $s\leq s'$ many times.
Note that~\eqref{e.GcapG=emptyset} implies that
\begin{align}\label{e.BcapB=emptyset}
    B^{-s}_{+,q}\cap B^{-s}_{+,q'}=\emptyset,\quad\forall q,q'\in [\beta-\gamma-s,R'-s]_\delta,
\end{align}
which along with~\eqref{e.I_+<CSum}, ~\eqref{e.GcapG=emptyset}, and $\delta<\eps<\gamma$ yields
\begin{align}\label{e.I_+<CSumB}
    \P\Ll(M^\eps_+(0)\Rr)\leq 48e^{32\bbeta_\crit}\sum_{r\in [\beta-\eps,\beta+\eps]_\delta} \P\Ll(B^{-0}_{+,r}\Rr).
\end{align}

For any $r\in [\beta-\gamma,\beta+\gamma]_\delta$ and vertex $x$ satisfying $|x|_\infty =r$, we take $\tilde x$ to be the vertex on $\{z:|z|_\infty = \beta-\gamma\}$ closest to the line segment between $x$ and the origin. Considering the translation that moves $x$ to $\tilde x$, we have
\begin{align*}
    \P\Ll(G^{-0}_{+,x}\Rr) \leq \P\Ll(G^{-2\gamma}_{+,\tilde x}\Rr).
\end{align*}
Due to $\frac{\beta+\gamma}{\beta-\gamma}<2$, for each $r\in [\beta-\gamma,\beta+\gamma]_\delta$, at most two $ x$ on $\{z:|z|_\infty = r\}$ are mapped to the same $\tilde x$ on $\{z:|z|_\infty = \beta-\gamma\}$. Using this, the above display, and~\eqref{e.GcapG=emptyset}, we have
\begin{align}\label{e.P(B)<2P(B)}
    \P\Ll(B^{-0}_{+,r}\Rr) = \sum_{|x|_\infty = r}\P\Ll(G^{-0}_{+,x}\Rr) \leq 2\sum_{|\tilde x|_\infty = \beta-\gamma}\P\Ll(G^{-2\gamma}_{+,\tilde x}\Rr) = 2\P\Ll(B^{-2\gamma}_{+,\beta-\gamma}\Rr),\, \forall r\in  [\beta-\gamma,\beta+\gamma]_\delta.
\end{align}

Then, we apply the same argument again. For any $r'\in [\beta-2\gamma,\beta-\gamma]_\delta$ and any vertex $y$ satisfying $|y|_\infty =\beta-\gamma$, we take $\tilde y$ to be the vertex on $\{z:|z|_\infty = r'\}$ closest to the line segment between $y$ and the origin. Translating $y$ to $\tilde y$, we get
\begin{align*}
    \P\Ll(G^{-2\gamma}_{+,y}\Rr) \leq \P\Ll(G^{-(2\gamma+\beta-\gamma-r')}_{+,\tilde y}\Rr)\leq \P\Ll(G^{-3\gamma}_{+,\tilde y}\Rr).
\end{align*}
Now, due to $\frac{\beta-\gamma}{r'}<2$ for $r'\in [\beta-2\gamma,\beta-\gamma]_\delta$, at most two $y$ on $\{z:|z|_\infty = \beta-\gamma\}$ are mapped to the same $\tilde y$ on $\{z:|z|_\infty =r'\}$. This along with the above display and~\eqref{e.GcapG=emptyset} implies that
\begin{align*}
    \P\Ll(B^{-2\gamma}_{+,\beta-\gamma}\Rr) =\sum_{|y|_\infty =\beta-\gamma}\P\Ll(G^{-2\gamma}_{+,y}\Rr)\leq 2\sum_{|\tilde y|_\infty =r'} \P\Ll(G^{-3\gamma}_{+,\tilde y}\Rr)= 2\P\Ll(B^{-3\gamma}_{+,r'}\Rr)
\end{align*}
for all $r'\in  [\beta-2\gamma,\beta-\gamma]_\delta$. 
Together with~\eqref{e.BcapB=emptyset}, it shows that
\begin{align*}
    1\geq \P\Ll(\bigcup_{r'\in[\beta-2\gamma,\beta-\gamma]_\delta}B^{-3\gamma}_{+,r'}\Rr)\geq \frac{\gamma}{2\delta}\P\Ll(B^{-2\gamma}_{+,\beta-\gamma}\Rr).
\end{align*}
Applying this and~\eqref{e.P(B)<2P(B)} to~\eqref{e.I_+<CSumB}, we obtain
\begin{align*}
    \P\Ll(M^\eps_+(0)\Rr)\leq 48e^{32\bbeta_\crit} \sum_{r\in[\beta-\eps,\beta+\eps]_\delta} \frac{4\delta}{\gamma} = \frac{384e^{32\bbeta_\crit}\eps}{\gamma}.
\end{align*}
Since $\gamma$ is fixed, this implies the desired result.
\end{proof}

\subsubsection{Verification of~\ref{i.H_bdy_conn}}

To show~\ref{i.H_bdy_conn}, we cite a technical estimate of boundary connectivity probability for double currents as in~\cite[Theorem~1.2]{duminil2021conformal}.

\begin{theorem}[\cite{duminil2021conformal}]\label{t.bdy_con}
Let $\delta=1$.
There is a function $\boldsymbol{\eps}:(0,\infty)\to [0,\infty)$ satisfying $\lim_{s\to 0} \boldsymbol{\eps}(s)=0$ such that for all $r, R>0$,
\begin{align*}\mathbf{P}^\emptyset_K\otimes \mathbf{P}^\emptyset_K\Ll( [-R,R]^2 \stackrel{\hat{\bn^1+\bn^2}\cap K}{\longleftrightarrow} (\partial K)_r \Rr) \leq \boldsymbol{\eps} \Ll(\frac{r}{R}\Rr)
\end{align*}
uniformly for $1$-discrete disc $K$ satisfying $[-2R,2R]^2 \subset K \not\subset [-3R,3R]^2$, where $\bn^1$ and $\bn^2$ are two independent sourceless random currents on $K$ under $\mathbf{P}^\emptyset_K\otimes \mathbf{P}^\emptyset_K$; and $\hat{\bn^1+\bn^2}$ is the trace of $\bn^1+\bn^2$, namely, $(\hat{\bn^1+\bn^2})_e = \mathds{1}_{(\bn^1+\bn^2)_e\neq 0}$ for every edge $e$. 
\end{theorem}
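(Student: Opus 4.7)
The statement is quoted as \cite[Theorem~1.2]{duminil2021conformal}, so the cleanest plan is to invoke that result directly. Nonetheless, let me sketch the strategy one would use to attack it from scratch, indicating where the difficulty lies.

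The starting point is the Edwards--Sokal/Grimmett-style coupling (see \cite{grimmett2009random,lupu2016note}) between the double random current $\hat{\bn^1+\bn^2}$ and the critical FK--Ising random cluster model on $K$: the trace of the sum of two independent sourceless currents can be obtained from a critical FK--Ising configuration by superimposing an independent Bernoulli percolation of a suitable intensity. Consequently, the connectivity event $\{[-R,R]^2 \stackrel{\hat{\bn^1+\bn^2}\cap K}{\longleftrightarrow} (\partial K)_r\}$ can be compared, up to multiplicative constants coming from the Bernoulli layer, to the analogous connectivity event in the critical FK--Ising model with some boundary condition on $\partial K$.

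Once the problem is reduced to FK--Ising connectivity, one invokes the RSW theory for critical FK--Ising developed by Duminil-Copin--Hongler--Nolin and the associated boundary arm exponents: the probability that a macroscopic box is connected to the neighborhood $(\partial K)_r$ of the boundary of a domain is controlled by a one-arm-to-the-boundary estimate, which decays polynomially in $r/R$. The specific geometric assumption $[-2R,2R]^2 \subset K \not\subset [-3R,3R]^2$ is tailored so that $\partial K$ is at distance of order $R$ from $[-R,R]^2$, so the event genuinely requires a macroscopic arm to get within $r$ of a boundary point. The resulting bound takes the form $\boldsymbol{\eps}(r/R)$ for some explicit function with $\boldsymbol{\eps}(s)\to 0$ as $s\to 0$.

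The main obstacle is the uniformity in $K$: one must obtain the estimate for every admissible disc $K$ with the same rate $\boldsymbol{\eps}$, independent of the microscopic geometry of $\partial K$. This is handled in \cite{duminil2021conformal} by combining RSW uniform crossing estimates (which are insensitive to the local shape of the boundary as long as the domain satisfies a uniform regularity condition) with the FKG inequality and mixing properties of FK--Ising to reduce to a canonical reference domain. The subtlety is that the boundary condition induced on $\partial K$ by the sourceless condition $\partial \bn = \emptyset$ is a mixture (depending on the loop structure of the currents), so one needs monotonicity arguments to dominate it by a wired boundary condition. Given the length and delicacy of these ingredients, the sensible choice is to quote the result of \cite{duminil2021conformal} verbatim rather than reprove it here.
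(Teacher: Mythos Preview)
Your proposal is correct and matches the paper's approach: the paper does not prove this theorem but simply cites it from \cite{duminil2021conformal}, adding only the one-line observation that the result extends from $1$-discrete domains to $1$-discrete discs by passing to the largest domain contained in $K$. Your additional sketch of the strategy behind the cited result is extra content not present in the paper.
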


Originally in \cite[Theorem~1.2]{duminil2021conformal}, $K$ is a $1$-discrete domain. Considering the largest $1$-discrete domain contained in $K$, we see that the same holds for $1$-discrete discs.

Recall that we have taken $\omega_\delta$ as in~\eqref{e.omega_delta} for $(\bt_\delta)_\delta$ satisfying~\eqref{e.(t_delta)_e_in}.

\begin{lemma}
Condition~\ref{i.H_bdy_conn} is satisfied by $(\eta_\delta,\omega_\delta)$.
\end{lemma}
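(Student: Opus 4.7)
The plan is to reduce Condition~\ref{i.H_bdy_conn} to the double-current boundary estimate (Theorem~\ref{t.bdy_con}) via two steps: a stochastic-domination argument identifying $\omega_\delta$ (in law, up to an extra independent factor) with the trace of a double random current, and a lattice rescaling that removes the $\delta$-dependence.

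\emph{Step 1: Reduction to $x=0$.} By the translation invariance of $\delta\Z^2$ and of the law $\P_K^\emptyset$, we may replace $x$ by the closest vertex in $\delta\Z^2$; the displacement of at most $\delta$ is absorbed in the $2R,3R$ buffers for $\delta$ small. So we may assume $x=0$.

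\emph{Step 2: Stochastic domination by a double-current trace.} Take two independent copies of the coupling \eqref{e.eta_vee_b}: write $(\eta^1,\bb^{t_\crit}_1,\bn^1)$ and $(\eta^2,\bb^{t_\crit}_2,\bn^2)$ for two independent triples under $(\P_K^\emptyset)^{\otimes 2}$, with $\hat\bn^i = \eta^i\vee \bb^{t_\crit}_i$. Since $\hat{\bn^1+\bn^2}_e = \mathds{1}_{\bn^1_e+\bn^2_e\neq 0} = \hat\bn^1_e\vee \hat\bn^2_e$,
\begin{align*}
    \hat{\bn^1+\bn^2} \,=\, \hat\bn^1\vee \hat\bn^2 \,\stackrel{\d}{=}\, \eta^1\vee\eta^2\vee\bb^{t^\star},
\end{align*}
using that $\bb^{t_\crit}_1\vee \bb^{t_\crit}_2$ is an edge-wise independent Bernoulli configuration with parameter $1-(1-t_\crit)^2 = 1-\cosh(\bbeta_\crit)^{-2}=t^\star$, independent of $(\eta^1,\eta^2)$. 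Now \eqref{e.(t_delta)_e_in} gives $(\bt_\delta)_e\le t^\star$, so in a monotone coupling $\bb^{\bt_\delta}_\delta \le \bb^{t^\star}_\delta$ entrywise, and adding a further independent copy $\eta_\delta'\stackrel{\d}{=}\eta_\delta$ only enlarges the open-edge set. Since $\{[-R,R]^2 \stackrel{\,\cdot\, \cap K}{\longleftrightarrow}(\partial K)_r\}$ is increasing, monotonicity yields
\begin{align*}
    \P_K^\emptyset\Ll([-R,R]^2 \stackrel{\omega_\delta\cap K}{\longleftrightarrow}(\partial K)_r\Rr)
    \le (\P_K^\emptyset)^{\otimes 2}\Ll([-R,R]^2 \stackrel{\hat{\bn^1+\bn^2}\cap K}{\longleftrightarrow}(\partial K)_r\Rr).
\end{align*}

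\emph{Step 3: Rescaling and conclusion.} Theorem~\ref{t.bdy_con} is stated at lattice spacing~$1$, but the critical Ising measure and hence the laws of $\bn^i$ and $\hat{\bn^1+\bn^2}$ are invariant under the lattice isomorphism $z\mapsto z/\delta$, which sends $K$ to a $1$-discrete disc $K/\delta$ satisfying $[-2R/\delta,2R/\delta]^2\subset K/\delta\not\subset[-3R/\delta,3R/\delta]^2$. Applying Theorem~\ref{t.bdy_con} at parameters $(R/\delta,r/\delta)$ bounds the right-hand side above by $\boldsymbol{\eps}((r/\delta)/(R/\delta))=\boldsymbol{\eps}(r/R)$, independently of $\delta$ and of the admissible disc~$K$. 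Taking $\sup_K$, then $\limsup_{\delta\to 0}$, then $r\to 0$ gives~\ref{i.H_bdy_conn} since $\boldsymbol{\eps}(r/R)\to 0$.

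The substantive ingredient is Theorem~\ref{t.bdy_con}; the only real work on our side is the coupling in Step~2, where the main point is the algebraic identity $1-(1-t_\crit)^2=t^\star$ that aligns the Bernoulli parameter arising from two superimposed copies of the Aizenman--Duminil-Copin--Tassion coupling with the prescribed upper bound on the perturbation intensity.
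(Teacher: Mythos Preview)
Your proof is correct and follows essentially the same route as the paper: both dominate $\omega_\delta$ by the trace of a double random current via the coupling~\eqref{e.eta_vee_b} and the identity $t^\star=1-(1-t_\crit)^2$, then invoke Theorem~\ref{t.bdy_con}. Your write-up is slightly more explicit about the translation and rescaling steps, but the argument is the same.
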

\begin{proof}
Clearly, Theorem~\ref{t.bdy_con} holds on the scaled lattice $\delta\Z^2$ for all $\delta$ sufficiently smaller than $r$ and $K$ translated along the lattice.
The coupling between $\eta_\delta$ and $\hat \bn_\delta$ in~\eqref{e.eta_vee_b} ensures that $\hat{\bn^1_\delta+\bn^2_\delta}$ has the same distribution of $\eta_\delta \vee \bb^{t_\crit}_\delta \vee \tilde \eta_\delta \vee \tilde \bb^{t_\crit}_\delta$ where $(\tilde \eta_\delta,\tilde \bn^{t_\crit}_\delta)$ is a copy of $(\eta_\delta,\bn^{t_\crit}_\delta)$, and the four percolation configurations are independent. Note that $\bb^{t_\crit}_\delta\vee \tilde \bb^{t_\crit}_\delta \stackrel{\d}{=} \bb^{t^\star}_\delta$ for $t^\star$ given below~\eqref{e.(t_delta)_e_in}, due to $t^\star = 1 -(1-t_\crit)^2$.
Recall that $\omega_\delta$ is taken as in~\eqref{e.omega_delta} under the condition~\eqref{e.(t_delta)_e_in}. Hence, there is a coupling under which $\omega_\delta \subset \eta_\delta\vee \bb^{t^\star}_\delta \subset \hat{\bn^1_\delta+\bn^2_\delta}$.
This relation and Theorem~\ref{t.bdy_con} yield that
\begin{align*}
    \P_K^\emptyset\Ll( x+[-R,R]^2 \stackrel{\omega_\delta\cap K}{\longleftrightarrow} (\partial K)_r \Rr) \leq\mathbf{P}^\emptyset_{ K}\otimes \mathbf{P}^\emptyset_{ K}\Ll( [-R,R]^2 \stackrel{\hat{\bn^1_\delta+\bn^2_\delta}\cap  K}{\longleftrightarrow} \Ll(\partial  K\Rr)_r \Rr) \leq \boldsymbol{\eps} \Ll(\frac{r}{R}\Rr)
\end{align*}
for $\delta$ sufficiently smaller than $r$, which gives~\ref{i.H_bdy_conn}.
\end{proof}

\subsubsection{Derivation of the main result}

Since $(\eta_\delta,\omega_\delta)$ satisfies~\ref{i.H_basics}--\ref{i.H_bdy_conn},
we are ready to prove the main result rigorously stated below.
Recall the function $\hat F$ defined in~\eqref{e.F_biject}.
Recall from Section~\ref{s.conformal_inv} the notion of conformal invariance for an $\mathscr{H}_{\mathcal{A}_\D}$-valuable random variable.

\begin{theorem}\label{t.cvg_rc_cle}
For each $\delta>0$, let $\eta_\delta$ be distributed as in~\eqref{e.eta_weight} and let $\omega_\delta$ be given as in~\eqref{e.omega_delta} with $\bt_\delta$ satisfying~\eqref{e.(t_delta)_e_in}.

Then, as $\delta\to0$, $(\eta^\lop_\delta,\omega^\cro_\delta)$ converges in distribution in $\mathscr{L}_\D\times\mathscr{H}_{\mathcal{A}_\D}$ to $(L^{\cle(3)},\omega^\cro_0)$ where $\omega^\cro_0$ satisfies $\omega^\cro_0 = \hat F(L^{\cle(3)})$ and $\hat F^{-1}(\omega^\cro_0) = L^{\cle(3)}$ a.s.
In particular, $\omega^\cro_0$ is conformally invariant.

\end{theorem}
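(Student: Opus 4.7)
The plan is to assemble the pieces already developed: the three preceding lemmas of the subsection verify that $(\eta_\delta,\omega_\delta)$ satisfies \ref{i.H_basics}--\ref{i.H_bdy_conn}, and Lemma~\ref{l.eta^loop->CLE} gives the convergence in distribution of $\eta^\lop_\delta$ in $\mathscr{L}_\D$ to $L^{\cle(3)}$. This puts us exactly in the hypotheses of Theorem~\ref{t.(eta,omega)_id_lim}, so I would invoke that theorem to conclude that $(\eta^\lop_\delta,\eta^\cro_\delta,\omega^\cro_\delta)$ converges in distribution in $\mathscr{L}_\D\times\mathscr{H}_{\mathcal{A}_\D}\times\mathscr{H}_{\mathcal{A}_\D}$ to $(L^{\cle(3)},F(L^{\cle(3)}),F(L^{\cle(3)}))$. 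Projecting onto the first and third coordinates immediately yields the convergence of $(\eta^\lop_\delta,\omega^\cro_\delta)$ to $(L^{\cle(3)},\omega^\cro_0)$ with $\omega^\cro_0=F(L^{\cle(3)})$ a.s.

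For the identification through $\hat F$, I would appeal to the construction of the nested $\cle(3)$ recalled in Section~\ref{s.cle}, where it is built to take values in $\mathscr{L}^\good_\D$ almost surely (disjoint simple loops away from the boundary with the prescribed orientation). Hence the second part of Theorem~\ref{t.(eta,omega)_id_lim} applies, giving $\omega^\cro_0=\hat F(L^{\cle(3)})$ and $\hat F^{-1}(\omega^\cro_0)=L^{\cle(3)}$ almost surely, where I only need to note that on $\mathscr{L}^\good_\D$ the maps $F$ and $\hat F$ agree as set-valued maps.

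For conformal invariance, I would use that nested $\cle(3)$ on the Jordan domain $\D$ is conformally invariant (this is part of its defining characterization, established in the references cited in Section~\ref{s.cle}): for any conformal automorphism $\varphi$ of $\D$, $\varphi_{\mathscr{L}}(L^{\cle(3)})\stackrel{\d}{=}L^{\cle(3)}$. Then Lemma~\ref{l.conformal_inv} applied to $\eta=L^{\cle(3)}$ and $\omega^\cro_0=F(L^{\cle(3)})$ gives directly that $\omega^\cro_0$ is conformally invariant as an $\mathscr{H}_{\mathcal{A}_\D}$-valued random variable.

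The bulk of the work is already behind us at this point, so there is no real obstacle; the only subtlety worth flagging is to make sure one is entitled to treat $L^{\cle(3)}$ as an a.s.\ element of $\mathscr{L}^\good_\D$ (in particular, measurability of $\mathscr{L}^\good_\D$ inside $\mathscr{L}_\D$, which is recorded right after Section~\ref{s.coll_simple}), so that the coupling statement in the second half of Theorem~\ref{t.(eta,omega)_id_lim} is indeed available.
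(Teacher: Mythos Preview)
Your proposal is correct and follows essentially the same approach as the paper: invoke Theorem~\ref{t.(eta,omega)_id_lim} (licensed by the verification of \ref{i.H_basics}--\ref{i.H_bdy_conn} and Lemma~\ref{l.eta^loop->CLE}), use that $L^{\cle(3)}\in\mathscr{L}^\good_\D$ a.s.\ for the $\hat F$ identification, and then appeal to Lemma~\ref{l.conformal_inv} together with the conformal invariance of nested $\cle(3)$. The paper's proof is simply a two-sentence compression of exactly these steps.
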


\begin{proof}
The main part of the theorem follows from Theorem~\ref{t.(eta,omega)_id_lim} and Lemma~\ref{l.eta^loop->CLE}. The conformal invariance of $\omega^\cro_0$ follows from the conformal invariance of $L^{\cle(3)}$, Lemma~\ref{l.conformal_inv}, and the fact that $\hat F$ is a restriction of $F$.
\end{proof}

Recall that $\hat\bn_\delta$ admits the representation in~\eqref{e.eta_vee_b}. Hence, $\hat\bn_\delta \stackrel{\d}{=} \omega_\delta$ when all the entries in $\bt_\delta$ are equal to $t_\crit$. 
Comparing $t^\star$ and $t_\crit$ defined below~\eqref{e.(t_delta)_e_in} and~\eqref{e.eta_vee_b}, respectively, we have $t_\crit<t^\star$, satisfying~\eqref{e.(t_delta)_e_in}. Therefore, the following corollary for the sourceless random current, the rigorous version of Result~\ref{resultB}, is immediate.

\begin{corollary}\label{c.cvg_rc}
As $\delta\to0$, $(\eta^\lop_\delta,\hat\bn^\cro_\delta)$ converges in distribution in $\mathscr{L}_\D\times\mathscr{H}_{\mathcal{A}_\D}$ to $(L^{\cle(3)},\hat\bn^\cro_0)$ where $\hat\bn^\cro_0$ satisfies $\hat\bn^\cro_0 = \hat F(L^{\cle(3)})$ and $\hat F^{-1}(\hat\bn^\cro_0) = L^{\cle(3)}$ a.s.
In particular, $\hat\bn^\cro_0$ is conformally invariant.

\end{corollary}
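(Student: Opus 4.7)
The plan is to recognize this as a direct specialization of Theorem~\ref{t.cvg_rc_cle} obtained by choosing the Bernoulli coefficients appropriately. The main work has already been done in the theorem; what remains is essentially a bookkeeping check that $\hat\bn_\delta$ fits the framework set up for $\omega_\delta$.

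First, I would recall the coupling of Aizenman--Duminil-Copin recorded in~\eqref{e.eta_vee_b}: under $\P^\emptyset_{\D_\delta}$, we have the distributional identity $\hat\bn_\delta \stackrel{\d}{=} \eta_\delta \vee \bb^{t_\crit}_\delta$, where $t_\crit = 1 - 1/\cosh(\bbeta_\crit)$. This means that, up to relabeling, $\hat\bn_\delta$ is exactly an instance of the perturbed configuration $\omega_\delta$ in~\eqref{e.omega_delta}, obtained by taking the constant Bernoulli coefficient vector $\bt_\delta \equiv t_\crit$.

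Next, I would verify that this choice satisfies the admissibility condition~\eqref{e.(t_delta)_e_in}. Since $t^\star = 1 - (1/\cosh(\bbeta_\crit))^2 = 1 - (1-t_\crit)^2 = 2 t_\crit - t_\crit^2$, and since $0 < t_\crit < 1$, we have $t_\crit < t^\star$, so $(\bt_\delta)_e = t_\crit \in [0,t^\star]$ for every edge $e$. Consequently, Theorem~\ref{t.cvg_rc_cle} applies directly to this choice of $\bt_\delta$, giving the convergence in distribution of $(\eta^\lop_\delta, \omega^\cro_\delta)$ in $\mathscr{L}_\D \times \mathscr{H}_{\mathcal{A}_\D}$ to $(L^{\cle(3)}, \omega^\cro_0)$ with the identification $\omega^\cro_0 = \hat F(L^{\cle(3)})$ and $\hat F^{-1}(\omega^\cro_0) = L^{\cle(3)}$ almost surely, together with the conformal invariance of $\omega^\cro_0$.

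Finally, the map sending a percolation configuration to its element of $\mathscr{H}_{\mathcal{A}_\D}$ (defined via the annuli it crosses) is a deterministic function of the configuration, so the distributional identity $\hat\bn_\delta \stackrel{\d}{=} \omega_\delta$ transfers to $\hat\bn^\cro_\delta \stackrel{\d}{=} \omega^\cro_\delta$, and the joint convergence carries over to $(\eta^\lop_\delta, \hat\bn^\cro_\delta)$. The limit $\hat\bn^\cro_0$ then inherits the identities $\hat\bn^\cro_0 = \hat F(L^{\cle(3)})$ and $\hat F^{-1}(\hat\bn^\cro_0) = L^{\cle(3)}$ a.s., as well as conformal invariance. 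Since every step is a direct invocation of Theorem~\ref{t.cvg_rc_cle} combined with the already-established coupling, there is no substantive obstacle; the only minor point requiring attention is to confirm that $\eta_\delta$ and $\hat\bn_\delta$ are coupled on the same probability space in the manner described in Section~\ref{s.app_random_current}, so that one may speak of the joint law of $(\eta^\lop_\delta, \hat\bn^\cro_\delta)$ rather than merely the marginals.
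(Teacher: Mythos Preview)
Your proposal is correct and follows essentially the same approach as the paper: the paper simply observes that $\hat\bn_\delta \stackrel{\d}{=} \omega_\delta$ for the constant choice $\bt_\delta \equiv t_\crit$, checks $t_\crit < t^\star$, and declares the corollary immediate from Theorem~\ref{t.cvg_rc_cle}. Your concern about the joint coupling of $\eta_\delta$ and $\hat\bn_\delta$ is already addressed in Section~\ref{s.app_random_current}, where the probability space is extended so that $\eta_\delta$, $\hat\bn_\delta$, and $\bb^{\bt}_\delta$ are all coupled under $\P^\emptyset_{\D_\delta}$.
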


\small
\bibliographystyle{abbrv}

\end{document}